\newcounter{themargin}
\long\def\forget#1{}
\theoremstyle{plain} 
\newtheorem{Thm}{Theorem}[section]
\newtheorem{Prop}[Thm]{Proposition}
\newtheorem{Lem}[Thm]{Lemma}
\newtheorem{Cor}[Thm]{Corollary}
\newtheorem{thm}[Thm]{Theorem}
\newtheorem{defi}[Thm]{Definition}
\newtheorem{rema}[Thm]{Remark} 
\newtheorem{prop}[Thm]{Proposition}
\newtheorem{lemm}[Thm]{Lemma}
\newtheorem{exam}[Thm]{Example}
\newtheorem{coro}[Thm]{Corollary}
\theoremstyle{definition} 
\newtheorem{Def}[Thm]{Definition}
\newcommand{\plim}{\mathop{\varprojlim}\limits}
\newcommand{\OO}{{\cal O}}
\newcommand{\trdeg}{\mathrm{trdeg}}
\newcommand{\mf}{\mathfrak}
\newcommand{\ol}[1]{\overline{#1}}
\newcommand{\im}{\mathrm{im}}
\newcommand{\Jor}{\mathrm{Jor}}
\def\Ff{{\mathbb{F}}}
\def\Rr{{\mathbb{R}}}
\def\Cc{{\mathbb{C}}}
\def\Qq{{\mathbb{Q}}}
\def\Zz{{\mathbb{Z}}}
\def\Aa{{\mathbb{A}}}
\def\Nn{{\mathbb{N}}}
\def\Gg{{\mathbb{G}}}
\def\Ll{{\mathbb{L}}}
\theoremstyle{remark} 
\newtheorem{Rem}[Thm]{Remark}
\hskip\labelsep {\textsc{Proof} \rm #1}:]}%
\labelsep \textsc{Proof}:]}%
\def\EndOfProof{\hskip .5em \vrule width 1.0ex height 1.0ex depth 0.3ex}
\font\tencyr=wncyr10
\font\sevencyr=wncyr7
\font\fivecyr=wncyr5
\def\theenumi{(\alph{enumi})}
\def\p@enumii{\theenumi}
\newcommand{\Prime}{\kern3\fontdimen1\font$'$\kern-7\fontdimen1\font}
\newdimen\auxdimen
\newdimen\auxdim
\newdimen\auxdimone
\newdimen\auxdimtwo
\newdimen\auxdimthree
\long\def\forget#1{}
\def\?{\ ???\ \immediate\write16{}%
\immediate\write16{Warning: There was still a question mark . . . }%
\immediate\write16{}}
\newcommand{\BA}{{\mathbb{A}}}
\newcommand{\BF}{{\mathbb{F}}\,\!{}}
\newcommand{\BL}{{\mathbb{L}}}
\newcommand{\BN}{{\mathbb{N}}}
\newcommand{\BQ}{{\mathbb{Q}}}
\newcommand{\BZ}{{\mathbb{Z}}}
\newcommand{\CA}{{\cal A}}
\newcommand{\CF}{{\cal F}}
\newcommand{\CN}{{\cal N}}
\newcommand{\CR}{{\cal R}}
\newcommand{\CS}{{\cal S}}
\newcommand{\CV}{{\cal V}}
\newcommand{\CX}{{\cal X}}
\newcommand{\wt}[1]{\widetilde{#1}}
\DeclareMathOperator{\Trace}{Trace}
\newcommand{\image}{\mathop{{\rm Im}}\nolimits}
\newcommand{\kernel}{\mathop{\rm Ker}\nolimits}
\def\invlim{\mathop{\vtop{\hbox{\rm lim}\vskip-8pt
        \hbox{\hskip1pt$\scriptstyle\longleftarrow$}\vskip-1pt}}}
\newcommand{\Hom}{\mathop{\rm Hom}\nolimits}
\newcommand{\Spec}{\mathop{{\rm Spec}}\nolimits}
\newcommand{\GL}{{\rm GL}}
\newcommand{\SL}{{\rm SL}}
\DeclareMathOperator{\Char}{char}
\newcommand{\Gal}{\mathop{\rm Gal}\nolimits}
\DeclareMathOperator{\Aut}{Aut}
\newcommand{\red}{{\rm red}}
\def\phi{\varphi}
\def\setminus{\smallsetminus}
\let\emptyset\varnothing
\def\longto{\longrightarrow}
\def\into{\hookrightarrow}
\newbox\mybox
\def\ontoover#1{\mathrel{
       \setbox\mybox=\hbox spread 1.4em{\hfil$\scriptstyle#1$\hfil}
       \vbox{\offinterlineskip\copy\mybox
             \hbox to\wd\mybox{\rightarrowfill\hskip-2.8mm
                               $\rightarrow$}}}}
\def\onto{\ontoover{\ }}
\let\oldbullet\bullet
\def\bullet{{\mathchoice{\oldbullet}%
                        {\oldbullet}%
                        {\scriptscriptstyle\oldbullet}%
                        {\oldbullet}}}
\let\emptyset\varnothing
\newcommand\ssi{{\rm ss}}
\DeclareMathOperator{\codim}{codim}
\newcommand{\resg}{|_}
\newcommand{\sss}{\mathrm{sss}}
\title{\vspace{-1em}
Independence of $\ell$-adic representations\\
of geometric Galois groups}
\author{\large G.\ B\"ockle, W.\ Gajda and S.\ Petersen}
\date{\today}
\begin{document}
\maketitle
\begin{abstract} 
Let $k$ be an algebraically closed field of arbitrary characteristic, let
$K/k$ be a finitely generated field extension and let $X$ be a separated
scheme of finite type over $K$. For each prime $\ell$, the absolute Galois
group of $K$ acts on the $\ell$-adic etale cohomology modules of $X$. We prove
that this family of representations varying over $\ell$ is almost
independent in the sense of Serre, i.e., that the fixed fields inside an
algebraic closure of $K$ of the kernels of the representations for all $\ell$
become linearly disjoint over a finite extension of $K$. In doing this, we
also prove a number of interesting facts on the images and on the ramification
of this family of representations. 
\footnotetext{
\textit{\bf 2010 MSC:}
11G10, 14F20.
}
\footnotetext{
\textit{\bf Key words:}
Galois representation, \'etale cohomology, algebraic scheme, finitely generated field}
\end{abstract}


\section{Introduction}
Let $G$ be a profinite group and $L$ a set of prime numbers. For every $\ell\in L$ let $G_\ell$ be a profinite
group and $\rho_\ell: G\to G_\ell$ a homomorphism. Denote by
$$\rho: G\to \prod\limits_{\ell\in L} G_\ell$$
the homomorphism induced by the $\rho_\ell$. Following the notation in \cite{bible} 
we call the family $(\rho_\ell)_{\ell\in L}$ {\em independent}
if $\rho(G)=\prod\limits_{\ell\in L} \rho_\ell(G)$. The family $(\rho_\ell)_{\ell\in L}$ is said to be {\em almost independent} if there exists an open subgroup $H$ of $G$ such that $\rho(H)=\prod_{\ell\in L} \rho_\ell(H)$. 

The main examples of such families of homomorphisms arise as follows: Let $K$ be a field  {with algebraic closure $\wt K$ and absolute Galois group $\Gal(K)=\Aut(\wt K/K)$. Let $X/K$ be a separated algebraic scheme\footnote{A scheme $X/K$ is algebraic if the structure morphism $X\to\Spec K$ is of finite type (cf.~\cite[Def.~6.4.1]{EGAI}).}
and denote by $\Ll$ the set of all prime numbers}. For every $q\in \Nn$ 
and every $\ell\in \Ll\smallsetminus \{\Char(k)\}$ we shall consider the representations
$$\rho_{\ell, X}^{(q)}: \Gal(K)\to \Aut_{\Qq_\ell}(H^q(X_{\wt K}, \Qq_\ell))\quad \mbox{and} \quad
\rho_{\ell, X, c}^{(q)}: \Gal(K)\to \Aut_{\Qq_\ell}(H^q_c(X_{\wt K}, \Qq_\ell))$$ 
of {$\Gal(K)$} on the \'etale cohomology groups $H^q(X_{\wt K}, \Qq_\ell)$ and $H_c^q(X_{\wt K}, \Qq_\ell)$. The following independence result has recently been obtained.

\begin{thm}\label{Serre-Gajda-Petersen}
Let $K$ be a finitely generated extension of~$\,\Qq$ and let $X/K$ be a separated algebraic scheme. Then the families $(\rho_{\ell, X}^{(q)})_{\ell\in \Ll}$ and  $(\rho_{\ell, X, c}^{(q)})_{\ell\in \Ll}$ are almost independent.
\end{thm}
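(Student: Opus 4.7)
The plan is to reduce Theorem~\ref{Serre-Gajda-Petersen} to the already established almost-independence theorem of Serre for smooth projective varieties over number fields. Two essentially independent reductions are needed: a geometric one, that drops the smoothness/properness hypotheses on $X$, and an arithmetic one, that specializes from $K$ down to a number field.

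\textbf{Geometric reduction.} After compactifying $X$ by Nagata's theorem, apply de Jong's theorem on alterations to obtain, possibly after a finite extension of $K$, a smooth projective hypercovering $Y_\bullet \to \bar X$. Deligne's smooth proper hyperresolution then presents $H^q(X_{\wt K},\Qq_\ell)$ (and its compactly supported variant) as an iterated subquotient of finite direct sums of $\ell$-adic cohomologies of smooth projective varieties, with Tate twists; the open complement $\bar X \setminus X$ is incorporated via the excision long exact sequence and induction on $\dim X$. Since almost-independence is preserved under finite direct sums, subquotients, Tate twists, and restriction to a common open subgroup of finite index, it suffices to handle the case in which $X$ is smooth projective.

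\textbf{Arithmetic reduction.} Spread $X/K$ out to a smooth projective morphism $f\colon\mathcal{X}\to S$ over an integral scheme $S$ of finite type over $\Qq$ with function field $K$. For every prime $\ell$ the higher direct image $R^q f_* \Qq_\ell$ is lisse, so $\rho^{(q)}_{\ell,X}$ factors through the canonical surjection $\Gal(K) \twoheadrightarrow \pi_1^{\et}(S,\bar\eta)$, and the problem is transformed into one about a compatible system of $\pi_1^{\et}(S)$-representations. A Hilbert-irreducibility argument furnishes a closed point $s\in S$, with residue field a number field $k_s$, such that the induced morphism $\Gal(k_s) \to \pi_1^{\et}(S,\bar\eta)$ has image containing an open subgroup whose image under each $\rho_\ell$ is of uniformly bounded index in $\rho_\ell(\Gal(K))$. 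Applying Serre's theorem to the smooth projective fiber $\mathcal{X}_s/k_s$ produces an open subgroup $H_0\subset \Gal(k_s)$ witnessing almost-independence; pulling $H_0$ back and intersecting with the common open subgroup above yields the required $H \subset \Gal(K)$.

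\textbf{Main obstacle.} The crux of the argument is uniformity in $\ell$. In the arithmetic step one must control the image $G^{\mathrm{geom}}_\ell$ of the geometric fundamental group $\pi_1^{\et}(S_{\wt\Qq})$ under each $\rho_\ell$, because this ``geometric'' piece of $\rho_\ell(\Gal(K))$ is invisible to the specialization $\Gal(k_s)$. A separate independence statement for the family $(G^{\mathrm{geom}}_\ell)_\ell$ is therefore needed, which reduces to a Hermite--Minkowski-type control of the ramification of all $\rho_\ell$ along the boundary divisor of the family. Such control follows from smooth and proper base change together with Deligne's theory of weights, ensuring that for $\ell$ sufficiently large the representations $\rho_\ell$ are tamely ramified with uniformly bounded ramification data, so that a \emph{single} open subgroup of $\Gal(K)$ can witness almost-independence for every $\ell$ at once.
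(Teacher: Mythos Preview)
The paper does not itself prove Theorem~\ref{Serre-Gajda-Petersen}; it is quoted as prior work, with the number-field case attributed to Serre and the higher transcendence-degree case to \cite{gp}. So there is no in-paper proof to compare against, but the methods of \cite{gp} can be inferred from the present paper (see Proposition~\ref{kl}, Proposition~\ref{geometricLemm}, and the overall strategy in Sections~\ref{Reductions}--\ref{TheProof}), and they differ substantially from your outline.

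Your geometric reduction is essentially correct and in line with Theorem~\ref{Berthelot}. The problem lies in the arithmetic reduction. A closed point $s\in S$ gives a section $\Gal(k_s)\to\pi_1(S)$ of the arithmetic quotient $\pi_1(S)\to\Gal(\Qq)$; its image meets $\pi_1(S_{\wt\Qq})$ trivially. Hence the index of $\rho_\ell(\Gal(k_s))$ in $\rho_\ell(\Gal(K))$ is at least the size of the image of $\rho_\ell$ restricted to the geometric fundamental group, and Hilbert irreducibility cannot make this index uniformly bounded in~$\ell$. You recognise this and postulate a separate independence result for $(G^{\mathrm{geom}}_\ell)_\ell$, but that is precisely the heart of the matter: once you have almost independence for the geometric images, the remaining arithmetic piece is comparatively easy, and your specialization step becomes superfluous rather than a reduction. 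The appeal to ``tame ramification with uniformly bounded ramification data'' via weights does not yield independence; tameness controls inertia, not the global image, and in characteristic zero there is no wild ramification to tame in the first place.

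The approach actually used in \cite{gp} (and generalized here) avoids specialization entirely. One applies a Larsen--Pink/Nori-type structure theorem to each $\rho_\ell(\Gal(K))$ to obtain a normal subgroup $M_\ell\in\Sigma_\ell(c)$ with quotient $H_\ell\in\Jor_\ell(d)$ for constants $c,d$ depending only on~$n$. The $H_\ell$ are killed uniformly by a finiteness argument for unramified extensions with bounded Jordan constant (Proposition~\ref{kl}, whose characteristic-zero case is \cite[Prop.~2.2]{gp}). The remaining $M_\ell$ are handled by a purely group-theoretic disjointness criterion: for $\ell\neq\ell'\ge 5$ the classes of finite simple groups of Lie type in characteristics $\ell$ and $\ell'$ are disjoint, so \cite[Lemme~2]{bible} gives independence. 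No Hilbert irreducibility or specialization to a number field enters.
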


The proof of this statement in the important special case $\trdeg(K/\Qq)=0$ is due to Serre (cf.~\cite{bible}). The case 
$\trdeg(K/\Qq)>0$ was worked out in \cite{gp}, answering a question of Serre (cf.~\cite{bible}, \cite{serre1994})
and Illusie \cite{illusie}.

The usefulness of almost independence is alluded to in Serre
\cite[Introd.]{bible} (cf.~also \cite[Sect.~10]{serre1994}). Almost independence for a family 
$(\rho_\ell\colon \Gal(K)\to G_\ell)_{\ell\in L}$ means that after a
finite field extension $E/K$, the image of $\Gal(E)$ under the
product representation $\prod_{\ell\in L}\rho_\ell$ is the product
$P=\prod_{\ell\in L}\rho_{\ell}(\Gal(E))$ of the images. In particular for any
finite extension $F/E$, the image of $\Gal(F)$ is open in $P$. This has
applications if one has precise knowledge of the shape of the images for
all $\ell$. For instance, suppose that there exists a reductive algebraic
subgroup $G$ of some $\GL_n$ over $\BQ$ such that for all sufficiently
large finite extensions $F$ of $K$ the image $\rho_\ell(\Gal(F))$ is
open in $G(\BQ_\ell)\cap \GL_n(\BZ_\ell)$ for all $\ell$ and surjective
for almost all $\ell$. Then almost independence implies that for some
finite extension $E$ of $K$ the image $\Gal(E)$ is adelically open,
i.e., it is open in the restricted product $\prod_{\ell\in L}^\prime
G(\BQ_\ell)$. For $K$ a number field, the Mumford-Tate conjecture
(cf.~\cite[C.3.3, p.~387]{serre1972}) predicts a group $G$ as above if $\rho_\ell$ arises from an
abelian variety over~$K$.

{The present article is concerned with a natural variant of Theorem~\ref{Serre-Gajda-Petersen} that grew out of the study of independence of families over fields of positive characteristic. For $K$ a finitely generated extension of $\BF_p$ it has long been known, e.g.~\cite{Igusa} 
or \cite{devicpink}, that the direct analogue of Theorem~\ref{Serre-Gajda-Petersen} is false: If $\varepsilon_\ell: \Gal(\BF_p)\to \Zz_\ell^\times$ denotes the $\ell$-adic cyclotomic character that describes the Galois action on $\ell$-power roots of unity, then it is elementary to see that the family $(\varepsilon_\ell)_{\ell\in\BL\setminus\{ p\}}$  is not almost independent. It follows from this that for every 
abelian variety $A/K$, if we denote by $\sigma_{\ell, A}: \Gal(K)\to \Aut_{\Qq_\ell}(T_\ell(A))$ the representation of $\Gal(K)$ on the  $\ell$-adic Tate module of $A$, then $(\sigma_{\ell, A})_{\ell\in\Ll\smallsetminus \{ p\}}$ is {\em not} almost  independent. One is thus led to study independence over the compositum $\wt\Ff_p K$ obtained from the field $K$ by adjoining all roots of unity. Having gone that far, it is then natural to study independence over any field $K$ that is finitely generated over an arbitrary algebraically closed field~$k$. Our main result is the following independence theorem.} 

\begin{thm} \label{main} {\bf (cf.~Theorem \ref{mainmain})} Let $k$ be an algebraically closed field of characteristic $p\ge 0$. 
Let $K/k$ be a finitely generated extension and let $X/K$ be a separated 
algebraic scheme. Then the families $(\rho_{\ell, X}^{(q)}{\resg{\Gal(K)}})_{\ell\in \Ll\smallsetminus\{p\}}$ and 
$(\rho_{\ell, X, c}^{(q)}{\resg{\Gal(K)}}))_{\ell\in \Ll\smallsetminus \{p\}}$ are almost independent.
\end{thm}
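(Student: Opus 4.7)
The approach extends the strategy of~\cite{gp} from finitely generated extensions of $\Qq$ to finitely generated extensions of an arbitrary algebraically closed field~$k$. It proceeds in three main stages: geometric reductions, spreading out, and descent to a subfield where a previously established analogue of Theorem~\ref{Serre-Gajda-Petersen} can be invoked. The new difficulty, absent from~\cite{gp}, is the positive characteristic case.

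\textbf{Step 1 (Reductions).} First I reduce to $X/K$ smooth projective. By de~Jong's alteration theorem, valid in arbitrary characteristic, there is a proper, surjective, generically finite morphism $X'\to X$ with $X'$ smooth and admitting a smooth projective compactification $\ol{X'}$ whose boundary is a strict normal crossings divisor. Combining long exact sequences of pairs, the Leray spectral sequence for the open/closed stratification, a trace argument absorbing the generic degree of the alteration, and passage to an open subgroup of $\Gal(K)$ of finite index to account for the field extension induced by~$X'\to X$, both $(\rho_{\ell,X}^{(q)})_\ell$ and $(\rho_{\ell,X,c}^{(q)})_\ell$ are controlled by the corresponding family for~$\ol{X'}$. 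Poincar\'e duality then identifies $H^q$ with $H^{2d-q}_c$ in the smooth projective case, so it is enough to treat~$\rho_{\ell,X}^{(q)}$ for smooth projective $X$.

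\textbf{Step 2 (Spreading out and factorisation through $\pi_1$).} Spread $X/K$ to a smooth projective morphism $f\colon \CX\to S$ with $S$ smooth and geometrically integral of finite type over $k$ and function field $K$. By the proper smooth base change theorem $R^qf_*\Qq_\ell$ is lisse on $S$ for every $\ell\ne p$, so each representation factors as
\[
\rho_{\ell,X}^{(q)}\colon \Gal(K)\twoheadrightarrow\pi_1^{\text{\'et}}(S,\bar\eta)\xrightarrow{\;\tilde\rho_\ell\;}\GL_{\Qq_\ell}\bigl(H^q(X_{\bar\eta},\Qq_\ell)\bigr).
\]
It therefore suffices to prove that the family $(\tilde\rho_\ell)_{\ell\ne p}$ of representations of $\pi_1^{\text{\'et}}(S)$ is almost independent.

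\textbf{Step 3 (Descent and main obstacle).} Set $k_0:=\wt\Qq$ if $p=0$ and $k_0:=\wt\BF_p$ otherwise; in either case $k_0\subset k$ is algebraically closed. Standard limit arguments descend $(S,\CX,f)$ to a triple $(S_R,\CX_R,f_R)$ over a finitely generated $k_0$-subalgebra $R\subset k$. For any smooth scheme of finite type over an algebraically closed field, the prime-to-$p$ quotient of $\pi_1^{\text{\'et}}$ is invariant under extension to a larger algebraically closed field, so $(\tilde\rho_\ell)_{\ell\ne p}$ is pulled back from an analogous family on a smooth $k_0$-scheme whose function field is finitely generated over $k_0$. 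In characteristic zero Theorem~\ref{Serre-Gajda-Petersen} (applied to a smooth projective model) now concludes; in positive characteristic one needs the analogue of that theorem for $\Gal(K_0\wt\BF_p)$ with $K_0$ finitely generated over~$\BF_p$, which is precisely the geometric independence statement the paper must first establish. The heart of the argument is thus this positive characteristic case: the failure of almost independence of the cyclotomic characters over $\BF_p$ is the very reason for restricting to $\Gal(K\wt\BF_p)$, and in the absence of Hodge-theoretic tools one must control each $\tilde\rho_\ell$ through Deligne's Weil~II, \'etale purity, and Lafforgue-type information on Frobenius eigenvalues, then synchronise the algebraic monodromy groups of the various $\tilde\rho_\ell$ into a common finite-index subgroup---the synchronisation being the main technical obstacle of the whole proof.
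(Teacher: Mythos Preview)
Your Steps~1 and~2 are broadly in line with the paper: the reduction to smooth projective $X$ via de~Jong and long exact sequences is Theorem~\ref{Berthelot}, and the spreading-out to a lisse family over a smooth base is Proposition~\ref{basechange}. The genuine gap is in Step~3.

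\textbf{The characteristic-zero gap.} You assert that after descending to a model over~$\wt\BQ$ (or further to a number field), Theorem~\ref{Serre-Gajda-Petersen} ``now concludes''. But Theorem~\ref{Serre-Gajda-Petersen} concerns $\Gal(K_0)$ for $K_0$ finitely generated over~$\BQ$, whereas what you need is the statement for the \emph{normal subgroup} $\Gal(K_0\wt\BQ)$. Almost independence is \emph{not} inherited by normal subgroups: if $\rho(G)=\prod_\ell\rho_\ell(G)$ and $N\unlhd G$, there is no reason that $\rho(N)=\prod_\ell\rho_\ell(N)$ (take $G=\BZ/2\times\BZ/2$ with the two projections and $N$ the diagonal). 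The invariance of the prime-to-$p$ fundamental group under algebraically closed base change gets you from general $k$ down to $\wt\BQ$ or $\wt\BF_p$, but it does \emph{not} bridge the gap from the arithmetic statement over $\BQ$ (resp.\ $\BF_p$) to the geometric one over $\wt\BQ$ (resp.\ $\wt\BF_p$). The same obstruction blocks any attempt to derive the positive-characteristic case from a putative arithmetic analogue over~$\BF_p$.

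\textbf{What the paper actually does.} The paper does not reduce to Theorem~\ref{Serre-Gajda-Petersen}; the argument is uniform in the characteristic. The key group-theoretic input is a variant of Larsen--Pink (Theorem~\ref{lp-main}): every $\rho_{\ell,X}(\Gal(K))$ sits in an extension $1\to M_\ell\to\rho_{\ell,X}(\Gal(K))\to H_\ell\to 1$ with $M_\ell\in\Sigma_\ell(2^n)$ and $H_\ell\in\Jor_\ell(J'(n))$. The decisive property, which supplies exactly what your argument is missing, is that the class $\Sigma_\ell(c)$ is stable under closed normal subgroups and quotients (Lemma~\ref{sigmalemm}). The $H_\ell$ are killed over a single finite extension using finiteness of fundamental groups and de~Jong (Proposition~\ref{FiniteAndRamif-prop}, with Katz--Lang in the background). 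Once all images lie in $\Sigma_\ell(c)$, independence follows from Serre's group-theoretic criterion (Proposition~\ref{geometricLemm}): every simple quotient of a group in $\Sigma_\ell(c)$ is $\BZ/\ell$ or of Lie type in characteristic~$\ell$, and for $\ell\ge 5$ these classes are pairwise disjoint. No synchronisation of algebraic monodromy groups enters.

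\textbf{The positive-characteristic ramification input.} Making Proposition~\ref{FiniteAndRamif-prop} apply requires the semistability condition $\CS(k)$ of Definition~\ref{rsconditions}. Deligne and Lafforgue are indeed used, but the mechanism is more specific than ``Frobenius eigenvalues'': one restricts to every curve $C\to U'$, uses strict compatibility and purity to attach an $\ell$-independent tuple of cuspidal automorphic representations, checks semistability at one auxiliary prime~$\ell_0$, and transports it to all $\ell$ via the $\ell$-independence of the local Weil--Deligne parameters. The Kerz--Schmidt--Wiesend result (Proposition~\ref{KS-variant}) then promotes curve-$\ell$-tameness to divisor-$\ell$-tameness, yielding~$\CS(k)$.
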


It will be clear that many techniques of the present article rely on \cite{bible}. Also, some of the key results of \cite{gp} will be important. The new methods in comparison with the previous results are the following: (i) The analysis of the target of our Galois representations, reductive algebraic groups over $\BQ_\ell$, will be based on a structural result by Larsen and Pink (cf.~\cite{lp2011}) and no longer as for instance in \cite{bible} on extensions of results by Nori (cf.~\cite{nori1987}). In the technically useful case $k\neq \wt k$, this facilitates greatly the passage from $\Gal(K)$ to $\Gal(K\wt k)$ when studying their image under $\rho_{\ell,X,?}^{(q)}$. (ii) Since we also deal with cases of positive characteristic, ramification properties will play a crucial role to obtain necessary finiteness properties of fundamental groups. The results on alterations by de Jong (cf.~\cite{dejong}) will obviously be needed. However we were unable to deduce all needed results from there, despite some known semistability results that follow from~\cite{dejong}. Instead we carry out a reduction to the case where $K$ is absolutely finitely generated and where $X/K$ is smooth and projective (this uses again \cite{dejong}). (iii) In the latter case,  we use a result by Kerz-Schmidt-Wiesend (cf.~\cite{kerzschmidt}) that allows one to control ramification on $X$ by controlling it on all smooth curves on~$X$. Since $X$ is smooth, results of Deligne show that the semisimplifications of $\rho_{\ell,X,?}^{(q)}$ form a pure and strictly compatible system. On curves, we can then apply the global Langlands correspondence proved by Lafforgue in \cite{Lafforgue}. This is a deep tool, but it allows us to obtain a very clean conclusion about the ramification properties of $(\rho_{\ell,X,?}^{(q)})_{\ell\in \Ll\smallsetminus \{p\}}$. 

Part (i) is carried out in Section~\ref{groupconcepts}. Results on fundamental groups and first results on ramification are the theme of Section~\ref{FundGps}; this includes parts of (ii) and we also refine some results from \cite{kerzschmidt}. Section~\ref{indep-sec} provides the basic independence criterion on which our proof of Theorem~\ref{main} ultimately rests. Section~\ref{Reductions} performs the reductions mentioned in (ii). The ideas described in (iii) are concluded in Section~\ref{TheProof}, where a slightly more precise form of Theorem~\ref{main} is proved.

We would like to point out that an alternative method for the part (ii) of our approach could be based on a recent unpublished result by Orgogozo which proves a global semistable reduction theorem (cf. \cite[2.5.8. Prop.]{orgo}). 
When our paper was complete we were informed by Anna Cadoret that, together with Akio Tamagawa, she has proven our Theorem \ref{main} by a different method cf. \cite{cata}. 


{\bf Acknowledgments:} G.B. thanks the Fields Institute for a research stay in the spring of 2012 during which part of this work was written. He also thanks Adam Mickiewicz University in Pozna{\' n} for making possible a joint visit of the three authors in the fall of 2012. He is supported by a grant of the DFG within the SPP 1489. W.G. thanks the Interdisciplinary Center for Scientific Computing (IWR) at Heidelberg University for hospitality during a research visit in January 2012 shortly after this project had been started. He was partially supported by the Alexander von Humboldt Foundation and by research grant 
UMO-2012/07/B/ST1/03541 of the National Centre of Sciences of Poland. S.P. thanks the Mathematics Department at Adam Mickiewicz University for hospitality and support during several research visits. We thank F. Orgogozo and L. Illusie for interesting correspondence concerning this project.

\section{Notation}\label{notation}
For a field $K$ with algebraic closure $\wt K$, we denote by $K_s\subset \wt K$ a separable closure. Then $\Gal(K)$ is equivalently defined as $\Gal(K_s/K)$ and as $\Aut(\wt K/K)$, since any field automorphism of $K_s$ fixing $K$ has a unique extension to $\wt K$. If $E/K$ is an arbitrary field extension, and if $\wt K$ is chosen inside $\wt E$, then there is a natural isomorphism $\Aut(\wt K/\wt K\cap E)\stackrel\simeq\longto \Aut(\wt K E/ E)$. Composing its inverse with the natural restriction $\Gal(E)\to\Aut(E\wt K/E)$ one obtains a canonical map which we denote $\mathrm{res}_{E/K}:  \Gal(E)\to \Gal(K)$. If $E/K$ is algebraic, then $\mathrm{res}_{E/K}$ is injective and we identify $\Gal(E)$ with the subgroup $\mathrm{res}_{E/K}(\Gal(E))=\Gal(E\cap \wt K)$ of $\Gal(K)$. 

Let $G$ be a profinite group. A {\em normal series} in $G$ is a sequence 
$$G\triangleright N_1\triangleright N_2\triangleright \cdots \triangleright N_s=\{e\}$$
of closed subgroups such that each $N_i$ is normal in $G$.

A $K$-variety $X$ is a scheme $X$ that is integral separated and algebraic over $K$. We denote by $K(X)$ its function field. Let $S$ be a normal connected scheme with function field $K$. A separable algebraic extension $E/K$ is said to be {\em unramified along $S$} if for every finite extension $F/K$ inside $E$ the normalization of $S$ in $F$ is \'etale over $S$. We usually consider $S$ as a scheme equipped with the generic geometric base point $s: \Spec(\widetilde{K})\to S$ and denote by $\pi_1(S):=\pi_1(S,  s)$ the \'etale fundamental group of $S$. If $\Omega$ denotes the maximal extension of $K$ in $K_s$ which is unramified along $S$, then $\pi_1(S)$ can be identified with the Galois group $\Gal(\Omega/K)$. A homomorphism $\rho: \Gal(K)\to H$ is said to be {\em unramified along $S$} if the fixed field $K_s^{\ker(\rho)}$ is unramified along $S$. If $E/K$ is an arbitrary algebraic extension, then $\rho\resg{\Gal(E)}$ stands for $\rho\circ \mathrm{res}_{E/K}$.

\section{Concepts from group theory}\label{groupconcepts}
In this section, we prove a structural result for compact profinite subgroups of linear algebraic groups over $\wt\Qq_\ell$ (cf.~Theorem~\ref{lp-main}) that will be crucial for the proof of the main theorem of this article. It is a consequence of  a variant (cf.~Proposition~\ref{lp0}) of a theorem of Larsen and Pink (cf.~\cite[Thm.~0.2, p.~1106]{lp2011}). The proof of~Proposition~\ref{lp0} makes strong use of the results and methods in~\cite{lp2011}, and in particular does not depend on the classification of finite simple groups.
\smallskip

\begin{defi}  \label{sigmadef} 
For $c\in \Nn$ we denote by $\Sigma_\ell(c)$ the class of profinite groups $M$ which possess a normal series by open subgroups 
$$M\triangleright I\triangleright P\triangleright \{1\}$$ such that $M/I$ is a finite product of finite simple groups of 
Lie type in characteristic $\ell$, the group $I/P$ is finite abelian of order prime to $\ell$ and index $[I:P]\le c$, and $P$ is a
pro-$\ell$ group.
\end{defi}

\begin{defi}  \label{jordef} 
For $d\in \Nn$ and $\ell$ a prime we denote by $\Jor_\ell(d)$ the class of finite groups $H$ which possess a normal abelian subgroup $N$ of order prime to $\ell$ and of index $[H:N]\le d$. We define $\Jor(d)$ as the union of the $\Jor_\ell(d)$ over all primes $\ell$. 
\end{defi}

\begin{defi}  \label{nbddgp-def}
A profinite group $G$ is called {\em $n$-bounded at $\ell$} if there exist closed compact subgroups $G_1\subset G_2\subset \GL_n(\wt\BQ_\ell)$ such that $G_1$ is normal in $G_2$ and $G\cong G_1/G_2$.
\end{defi}

The following is the main result of this section. 
\begin{thm}\label{lp-main}
For every $n\in \BN$ there exists a constant $J'(n)$ (independent of $\ell$) such that the following holds: For any prime $\ell$, any group $G$ that is $n$-bounded at $\ell$ lies in 
a short exact sequence
$$1\to M\to G \to H \to 1$$
such that $M$ is open normal in $G$ and lies in $\Sigma_\ell(2^n)$ and $H$ lies in $\Jor_\ell(J'(n))$. 
\end{thm}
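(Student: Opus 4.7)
The plan is to reduce Theorem \ref{lp-main} directly to Proposition \ref{lp0}, which I take as a black box providing, for any compact subgroup $\Gamma \subset \GL_n(\wt{\BQ}_\ell)$, an open normal subgroup $M_\Gamma \triangleleft \Gamma$ with $M_\Gamma \in \Sigma_\ell(2^n)$ and $\Gamma/M_\Gamma \in \Jor_\ell(J(n))$, where $J(n)$ depends only on $n$. Writing $G\cong G_2/G_1$ with compact $G_1 \triangleleft G_2 \subset \GL_n(\wt{\BQ}_\ell)$ as in Definition \ref{nbddgp-def}, I would first apply Proposition \ref{lp0} to $G_2$ to obtain an open normal subgroup $M_2 \triangleleft G_2$ with a filtration $M_2 \triangleright I_2 \triangleright P_2 \triangleright \{1\}$ witnessing $M_2 \in \Sigma_\ell(2^n)$, and with $G_2/M_2 \in \Jor_\ell(J(n))$.

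I would then push the filtration through the projection $\pi\colon G_2 \twoheadrightarrow G = G_2/G_1$, defining $M := \pi(M_2)$, $I := \pi(I_2)$, and $P := \pi(P_2)$. Each of these remains normal in the image of the corresponding larger group since normality is preserved by surjective homomorphisms, and $M$ is open normal in $G$ because $M_2$ is open normal in $G_2$. Moreover the quotient $G/M$ identifies with $G_2/(M_2 G_1)$, which is a quotient of the finite group $G_2/M_2$.

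The verification that $M \in \Sigma_\ell(2^n)$ and $G/M \in \Jor_\ell(J(n))$ then reduces to the fact that both classes are closed under taking quotients of profinite groups. Concretely, $M/I$ is a quotient of $M_2/I_2$, and since the normal subgroups of a finite product of non-abelian finite simple groups of Lie type are precisely the subproducts, such quotients are again products of the same shape; $I/P$ is a quotient of the finite abelian group $I_2/P_2$, so it is finite abelian, its order divides $|I_2/P_2|$ and is in particular prime to $\ell$, and its cardinality is at most $[I_2:P_2] \le 2^n$; finally $P$ is a quotient of the pro-$\ell$ group $P_2$, hence pro-$\ell$. Closure of $\Jor_\ell(d)$ under quotients is equally formal: the image of a normal abelian subgroup of prime-to-$\ell$ order and index $\le d$ remains normal abelian of prime-to-$\ell$ order and of index $\le d$. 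Taking $J'(n) := J(n)$ then completes the argument.

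The main obstacle of Theorem \ref{lp-main} therefore lies entirely in Proposition \ref{lp0}; once that variant of Larsen--Pink has been established (presumably by passing from the compact subgroup of $\GL_n(\wt{\BQ}_\ell)$ to its residual image in $\GL_n(\overline{\BF}_\ell)$ via reduction modulo the maximal ideal of the valuation ring of $\wt{\BQ}_\ell$, applying the original Larsen--Pink structure theorem to that finite image, and lifting the resulting filtration through the pro-$\ell$ kernel of reduction), the deduction above is formal. The only delicate point is to confirm that both the bound $2^n$ on $[I:P]$ and the bound $J(n)$ on $[G:M]$ genuinely pass through the quotient $\pi$ without any enlargement, which is indeed the case since quotients can only shrink indices and orders.
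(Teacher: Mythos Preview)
Your approach is essentially the paper's: reduce from $G=G_2/G_1$ to $G_2$ by pushing the filtration through the quotient map (the paper packages this as Lemma~\ref{sigmalemm}, you spell it out by hand), and then handle a compact $G_2\subset\GL_n(\wt\BQ_\ell)$ via reduction modulo~$\ell$.

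One clarification is worth making. Proposition~\ref{lp0} as stated in the paper applies only to \emph{finite} subgroups of $\GL_n(F)$ with $F$ of characteristic~$\ell$, not directly to compact subgroups of $\GL_n(\wt\BQ_\ell)$ as you phrase it. The bridge you sketch in your final parenthetical is exactly the content of the paper's proof of Theorem~\ref{lp-main}, not something already contained in Proposition~\ref{lp0}: one first uses a Baire-category argument to descend from $\GL_n(\wt\BQ_\ell)$ to $\GL_n(E)$ for some \emph{finite} extension $E/\BQ_\ell$ (a step you omit; the valuation ring of $\wt\BQ_\ell$ itself is not a discrete valuation ring, so reducing modulo its maximal ideal is not quite the right picture), then conjugates into $\GL_n(\CO_E)$, reduces modulo the maximal ideal to get a finite subgroup of $\GL_n(\BF)$, applies Proposition~\ref{lp0} there, and absorbs the pro-$\ell$ reduction kernel into~$P$. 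So your proposal is correct, but the division of labor between Theorem~\ref{lp-main} and Proposition~\ref{lp0} is slightly misplaced.
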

We state an immediate corollary:
\begin{coro}\label{lp-cor}
Let $G$ be $n$-bounded at $\ell$ and define $G_\ell^+$ as the normal hull of all pro-$\ell$ Sylow subgroups of $G$. Then for $\ell>J'(n)$, the group $G_\ell^+$ is an open normal subgroup of $M$ of index at most~$2^n$.
\end{coro}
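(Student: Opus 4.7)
The plan is to apply Theorem~\ref{lp-main} to obtain a short exact sequence $1 \to M \to G \to H \to 1$, where $M \in \Sigma_\ell(2^n)$ with normal series $M \triangleright I \triangleright P$ as in Definition~\ref{sigmadef}, and $H \in \Jor_\ell(J'(n))$. The hypothesis $\ell > J'(n)$ forces $|H|$ to be coprime to $\ell$: if $N \trianglelefteq H$ is a normal abelian subgroup of order prime to $\ell$ with $[H:N] \leq J'(n) < \ell$, then $|H| = |N| \cdot [H:N]$ is prime to $\ell$. Hence every pro-$\ell$ Sylow subgroup $S$ of $G$ is contained in $M$ and is itself a pro-$\ell$ Sylow of $M$; and since $P$ is a normal pro-$\ell$ subgroup of $M$, $P \subseteq S$. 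It follows that $G_\ell^+ \subseteq M$ and $P \subseteq G_\ell^+$.

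The next step is to show that $G_\ell^+ \cdot I = M$. Since $S \cap I$ is pro-$\ell$ and $I/P$ is finite of order prime to $\ell$, we have $S \cap I = P$, and therefore the image $SI/I$ is a Sylow $\ell$-subgroup of the finite group $M/I$. By definition, $M/I$ is a direct product of finite simple groups of Lie type in characteristic $\ell$. The key group-theoretic input is that every such simple group is generated by its Sylow $\ell$-subgroups---for instance, by the unipotent radicals of two opposite Borel subgroups---and consequently so is the product $M/I$. Because $G_\ell^+$ is normal in $G$ (hence in $M$) and contains every pro-$\ell$ Sylow of $M$, its image in $M/I$ contains every Sylow $\ell$-subgroup of $M/I$, and therefore equals $M/I$.

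Combining these two steps yields
$$[M : G_\ell^+] \;=\; [G_\ell^+ \cdot I : G_\ell^+] \;=\; [I : G_\ell^+ \cap I] \;\leq\; [I : P] \;\leq\; 2^n.$$
The subgroup $G_\ell^+$ is open in $M$ because it contains the open subgroup $P$, and it is normal in $M$ because it is normal in $G$. The main potential obstacle is the group-theoretic claim that finite simple groups of Lie type in their defining characteristic are generated by their Sylow $\ell$-subgroups; however, this is a classical structural fact stemming from the Chevalley generators and the Bruhat decomposition, and---importantly---does not rely on the classification of finite simple groups, consistent with the spirit of Theorem~\ref{lp-main}.
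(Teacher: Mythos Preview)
Your proof is correct and is precisely the natural unpacking of what the paper leaves implicit (the corollary is stated as ``immediate'' with no proof given). One small simplification: your appeal to Chevalley generators and the Bruhat decomposition is more than you need---since each factor of $M/I$ is a finite \emph{simple} group whose order is divisible by $\ell$, the subgroup generated by its $\ell$-Sylow subgroups is a nontrivial normal subgroup and hence the whole group; this already gives $G_\ell^+ I = M$ without any structure theory of groups of Lie type.
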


In the remainder of this section we shall give a proof of Theorem~\ref{lp-main}. Moreover we shall derive some elementary permanence properties for the properties described by~$\Sigma_\ell(d)$ and~$\Jor_\ell(d)$. 

\smallskip

The content of the following lemma is presumably well-known. 
\begin{lemm} For every $r\in \Nn$, every algebraically closed
field $F$ and every semisimple 
algebraic group $G$ of rank $r$ the center $Z$ of $G$ satisfies
$|Z(F)|\le 2^r$.\label{centfin}
\end{lemm}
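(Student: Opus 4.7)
The plan is to reduce the bound on $|Z(F)|$ to lattice data attached to the root system of $G$. Fix a maximal torus $T\subseteq G$; since $G$ is semisimple (indeed reductive), $Z$ is contained in $T$. Write $X=X^\ast(T)$ for the character lattice and $Q\subseteq X$ for the root lattice generated by the roots of $G$ relative to $T$. Because $Z$ is cut out inside $T$ as the common kernel of the roots, $Z$ is the diagonalizable group scheme dual to the finitely generated abelian group $X/Q$, and therefore
$$|Z(F)|=|\Hom(X/Q,F^\times)|\le|X/Q|,$$
with equality when $F^\times$ contains enough roots of unity (which happens since $F$ is algebraically closed, provided $\Char(F)\nmid|X/Q|$; in the excluded case the bound only gets better).

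Next I would pass to the weight lattice $P$ of the root system of $G$, i.e., the lattice dual to the coroot lattice. Since $G$ is semisimple of rank $r$, $Q$ has full rank $r$ in $X$, the inclusions $Q\subseteq X\subseteq P$ hold, and $X/Q$ is a finite group whose order divides $|P/Q|$. Thus it suffices to prove $|P/Q|\le 2^r$.

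The root system of $G$ splits as an orthogonal direct sum of irreducible root systems $\Phi_1,\dots,\Phi_s$ of ranks $r_1,\dots,r_s$ with $\sum_i r_i=r$, and correspondingly $P/Q=\bigoplus_i P_i/Q_i$. A case-by-case inspection of the classification of irreducible root systems (equivalently, of the determinants of the Cartan matrices) yields $|P_i/Q_i|\le r_i+1$, the extremal case being type $A_{r_i}$. Combined with the elementary inequality $r_i+1\le 2^{r_i}$ valid for every $r_i\ge 1$, one concludes
$$|Z(F)|\le|X/Q|\,\Big|\,|P/Q|=\prod_i|P_i/Q_i|\le\prod_i(r_i+1)\le\prod_i 2^{r_i}=2^r,$$
which is the desired bound. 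The only step that is not purely formal is the uniform bound $|P_i/Q_i|\le r_i+1$ in each irreducible case, and this is the main potential "obstacle"; however it is immediate from the standard tables of the Cartan-Killing classification, so in practice the argument is short. The proof is characteristic-free, since the inequality $|Z(F)|\le|X/Q|$ already holds in any characteristic.
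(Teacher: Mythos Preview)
Your proof is correct and follows essentially the same route as the paper: identify $Z$ as the intersection of the kernels of the roots inside a maximal torus, bound $|Z(F)|$ by the index of the root lattice in the weight lattice, decompose into irreducible root systems, and read off the bound from the classification tables. The only cosmetic differences are that the paper swaps the letters $P$ and $Q$ relative to your (Bourbaki-style) convention, and that you record the sharper intermediate bound $|P_i/Q_i|\le r_i+1$ before invoking $r_i+1\le 2^{r_i}$, whereas the paper cites the weaker bound $2^{r_i}$ directly from the tables.
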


\begin{proof}
Lacking a precise reference, we include a proof for the reader's convenience. Observe first that  the center $Z$ is a finite (cf.~\cite[I.6.20, p. 43]{testerman}) diagonalizable algebraic group. Let $T$ be a maximal torus of $G$. Denote by $X(T)=\Hom(T, \Gg_m)$ the character group of $T$ and by $\Phi\subset X(T)$ the set of roots of $G$. Then ${\mathcal{R}}=(X(T)\otimes \Rr, \Phi)$ is a root system. Let $P=\Zz \Phi$ be the root lattice and $Q$ the weight lattice of this root system. Then $P\subset X(T)\subset Q$. The center $Z$ of $G$ is the kernel of the adjoint representation (cf.~\cite[I.7.12, p. 49]{testerman}). Hence $Z=\bigcap_{\chi\in \Phi} \ker(\chi)$ and there is an exact sequence
$$0\to Z\to T\to \prod_{\chi\in \Phi} \Gg_m$$
where the right hand map is induced by the characters $\chi: T\to \Gg_m$  ($\chi\in \Phi$). We apply the functor $\Hom(-, \Gg_m)$ and obtain an exact sequence 
$$\prod_{\chi\in \Phi} \Zz\to X(T)\to \Hom(Z, \Gg_m)\to 0$$
The cokernel of the left hand map is $X(T)/P$. Thus $|Z(F)|\le [X(T):P]\le [Q:P]$. 

Furthermore, the root system ${\mathcal{R}}$ decomposes into a direct sum
$${\mathcal{R}}=\bigoplus_{i=1}^s (E_i, \Phi_i)$$
of indecomposable root systems ${\mathcal{R}}_i:=(E_i, \Phi_i)$. Let $r_i=\dim(E_i)$ be the rank of ${\mathcal{R}}_i$. Let $P_i$ be the root lattice and $Q_i$ the weight lattice of ${\mathcal{R}}_i$. Note that by definition $P=\oplus_iP_i$ and $Q=\oplus_iQ_i$. It follows from the classification of indecomposable root systems that $|Q_i/P_i|\le 2^{r_i}$ (cf.~\cite[Table 9.2, p. 72]{testerman}) for all $i$. Hence $|Z(F)|\le|Q/P|\le 2^{r_1}2^{r_2}\cdots 2^{r_s}=2^r$ as desired. 
\end{proof}

\begin{rema}
The semisimple algebraic group $(\SL_{2, \Cc})^r$ has rank $r$ and its center $(\mu_2)^r$ has exactly 
$2^r$ $\Cc$-rational points. Hence the bound of Lemma \ref{centfin} cannot be improved.
\end{rema}

The following result is an adaption of the main result of \cite{lp2011} by Larsen and Pink.
\begin{prop} \label{LPProp}For every $n\in\Nn$, there exists a constant $J'(n)$ such that for every field $F$
of positive characteristic $\ell$ and every finite subgroup $\Gamma$ of 
$GL_{n}(F)$, there exists a normal series 
$$\Gamma\triangleright L\triangleright M\triangleright I\triangleright P\triangleright \{1\}$$
of $\Gamma$ with the following properties:
\begin{enumerate}
\item[i)] $[\Gamma:L]\le J'(n)$.
\item[ii)] The group $L/M$ is abelian of order prime to $\ell$.
\item[iii)] The group $M/I$ is a finite 
product of finite simple groups of Lie
type in characteristic $\ell$.
\item[iv)] The group $I/P$ is abelian of order prime to $\ell$ and $[I:P]\le 2^n$.
\item[v)] $P$ is an $\ell$-group.
\end{enumerate}\label{lp0}
Furthermore the constant $J'(n)$ is the same as in \cite[Thm.~0.2, p. 1106]{lp2011}. 
\end{prop}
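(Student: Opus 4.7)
The plan is to reduce directly to Theorem~0.2 of \cite{lp2011} and supplement it with Lemma~\ref{centfin}. Larsen--Pink's theorem already produces, for any finite $\Gamma \subset \GL_n(F)$ with $F$ of characteristic $\ell > 0$, a normal series of precisely the shape claimed in (i), (ii), (iii), (v), together with the weaker assertion that $I/P$ is abelian of order coprime to $\ell$ (but without the explicit bound $[I:P] \le 2^n$). The goal is therefore to extract this remaining quantitative bound; the constant $J'(n)$ will be inherited unchanged from their statement.

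To this end, I would trace through the construction in \cite{lp2011}. There the group $L$ is defined so that the Zariski closure $\mathcal{L}$ of $L$ in $\GL_{n,\ol F}$ is connected (and $[\Gamma : L] \le J'(n)$ is controlled via the component group, an observation that produces the constant $J'(n)$). The unipotent radical $\mathcal{R}_u$ of $\mathcal{L}$ contributes $P = L \cap \mathcal{R}_u(F)$, which is an $\ell$-group, yielding (v). The remaining layers come from the reductive quotient $\mathcal{L}/\mathcal{R}_u$: its central torus yields the abelian prime-to-$\ell$ quotient $L/M$ of (ii); the adjoint form of its semisimple quotient $\mathcal{G}^{\mathrm{ss}}/Z$, for $Z$ the centre of $\mathcal{G}^{\mathrm{ss}} := \mathcal{L}/\mathcal{R}$, yields the product of finite simple groups of Lie type $M/I$ in (iii); and the contribution of $Z(F)$ accounts for the abelian prime-to-$\ell$ layer $I/P$ in (iv).

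Once $I/P$ has been identified as a subgroup of $Z(F) \subseteq Z(\ol F)$, the desired bound is immediate: since $\mathcal{L} \subset \GL_{n,\ol F}$, the semisimple group $\mathcal{G}^{\mathrm{ss}}$ has rank at most $n$, so Lemma~\ref{centfin} gives $|Z(\ol F)| \le 2^n$ and hence $[I:P] \le 2^n$, which is precisely~(iv). The main obstacle in turning this sketch into a complete argument will be the bookkeeping inside the proof of \cite[Thm.~0.2]{lp2011}: one has to verify carefully that the intermediate abelian prime-to-$\ell$ layer sitting directly below the Lie-type layer is realised honestly as a subgroup of $Z(\ol F)$, rather than as an extension involving further contributions (e.g.~additional $F$-rational torus points) that would wreck the estimate by $2^n$; all such extra contributions must be arranged to appear in the upper abelian layer $L/M$ instead.
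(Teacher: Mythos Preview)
Your approach is the same as the paper's, and your last paragraph correctly identifies the crux: one must go into the \emph{proof} of \cite[Thm.~0.2]{lp2011}, not just its statement, to split the abelian prime-to-$\ell$ contribution into a piece inside the centre of the semisimple quotient (bounded via Lemma~\ref{centfin}) and a remainder pushed into the upper layer $L/M$. Two points are worth flagging. First, contrary to your opening paragraph, the \emph{statement} of Larsen--Pink's theorem does not already produce layer~(ii): their normal series has only four steps $\Gamma\triangleright\Gamma_1\triangleright\Gamma_2\triangleright\Gamma_3$, with the Lie-type layer $\Gamma_1/\Gamma_2$ sitting directly below the bounded-index top. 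The paper manufactures the extra layer by setting $L=\Gamma_1$, $P=\Gamma_3$, and then $M=\Gamma_1\cap D(F)$, $I=\Gamma_2\cap D(F)$ for $D=[G^\circ,G^\circ]R$---precisely the central-torus/semisimple split you describe in your second paragraph, so this is an expository slip rather than a gap. Second, the rank bound $\mathrm{rk}(\mathcal{G}^{\mathrm{ss}})\le n$ is not immediate from $\mathcal{L}\subset\GL_n$, since $\mathcal{G}^{\mathrm{ss}}$ is a \emph{quotient} rather than a subgroup; the paper handles this by lifting a maximal torus of $\bar D$ back through the (split) extension by the unipotent radical into~$G^\circ\subset\GL_n$.
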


{\em Proof.} We can assume that $F$ is algebraically closed. Let $J'(n)$ be the constant from
\cite[Thm.~0.2, p. 1106]{lp2011}.
Larsen and Pink construct in the proof of their Theorem \cite[Thm.~0.2, p. 1155--1156]{lp2011}
a smooth algebraic group $G$ over $F$ containing $\Gamma$ and normal 
subgroups $\Gamma_i$ of $\Gamma$ such that there is a normal series
$$\Gamma\triangleright\Gamma_1\triangleright \Gamma_2 \triangleright \Gamma_3\triangleright\{1\}$$
and such that $[\Gamma:\Gamma_1]\le J'(n)$, $\Gamma_1/\Gamma_2$ is a product of finite simple groups of Lie type
in characteristic $\ell$, $\Gamma_2/\Gamma_3$ is abelian of order prime to $\ell$ and $\Gamma_3$ is an $\ell$-group.
Let $R$ be the unipotent radical of the connected component $G^\circ$ of $G$.
The proof of Larsen and Pink shows that $\Gamma_1\triangleleft G^\circ(F)$, $\Gamma_3=\Gamma\cap R(F)$ and $\Gamma_2/\Gamma_3$ is contained in $\ol{Z}(F)$ where $\ol{Z}$ denotes the center of the reductive group $\ol{G}:=G^\circ/R$. 
Let $\ol{D}=[\ol{G}, \ol{G}]$ be
the derived group of
$\ol{G}$ and $D=[G^\circ, G^\circ]R$.

Now define $L=\Gamma_1$, $M=\Gamma_1\cap D(F)$, $I=\Gamma_2\cap D(F)$ and $P=\Gamma_3$. These groups are normal in $\Gamma$, because $D(F)$ is characteristic in $G^\circ(F)$ and because $\Gamma_1, \Gamma_2, \Gamma_3$ are normal in
$\Gamma$. The group $L/M$ is a subgroup of the abelian group $G^\circ(F)/{D(F)}$. 
The group $M/I$ is a normal subgroup of $\Gamma_1/\Gamma_2$, hence it is a product of finite simple groups of Lie type in characteristic $\ell$. The group $I/P$ is a subgroup of $\Gamma_2/\Gamma_3$, hence 
$I/P$ is abelian of order prime to $\ell$. Furthermore $I/P=I/\Gamma_3$ is a subgroup of $\ol{G}(F)$ which lies in $\ol{D}(F)$ and
in $\ol{Z}(F)$. Thus $I/P$ lies in the center $\ol{Z}(F)\cap \ol{D}(F)$ of the semisimple group $\ol{D}(F)$. 
It follows by Lemma \ref{centfin} that $[I:P]\le 2^{\mathrm{rk}(\ol{D})}$. 

It remains to show that $\mathrm{rk}(\ol{D})\le n$. Let $T$ be a maximal torus of $\ol{D}$ and denote by 
$\pi: G^\circ \to \ol{G}$ the canonical projection. Then the algebraic group $B:=\pi^{-1}(T)$ sits in an exact 
sequence 
$$0\to R\to B\to T\to 0$$
and $B$ is connected smooth and solvable, because $R$ and $T$ have these properties. The above exact sequence
splits (cf.~\cite[XVII.5.1]{SGA3}); hence $B$ contains a copy of $T$. This copy is contained in a maximal torus $T'$ of
$\GL_{n, F}$. Thus $n=\dim(T')\ge \dim(T)=\mathrm{rk}(\ol{D})$ as desired.\hfill $\Box$

\begin{proof}[Proof of Theorem~\ref{lp-main}]
Suppose $G$ is $n$-bounded at $\ell$, so that it is a quotient $G_2/G_1$ with $G_i\subset \GL_n(\wt\Qq_\ell)$. By Lemma~\ref{sigmalemm}(a) below, it will suffice to prove the theorem for $G_2$. Thus we assume that $G$ is a compact profinite subgroup of $\GL_n(\wt\BQ_\ell)$. By compactness of $G$ and a Baire category type argument (cf.~\cite[proof of Cor.~5]{dickinson}) the group $G$ is contained in $\GL_n(E)$ for some finite extension $E$ of $\Qq_\ell$. Let $\OO_E$ be the ring of integers of the local field $E$. Again by compactness of $G$ one can then find an $\OO_E$-lattice in $E^n$ that is stable under $G$. Hence we may assume that $G$ is a closed subgroup of $\GL_n(\OO_E)$.

Let $\mf p$ be the maximal ideal of the local ring $\OO_E$ and let $\Ff=\OO_E/\mf p$ be its residue field. The kernel $\mathcal{K}$ of the canonical map $p: \GL_n(\OO_E)\to \GL_n(\Ff)$ is a pro-$\ell$ group. Hence $Q=\mathcal{K}\cap G$ is pro-$\ell$ and open normal in $G$. We now apply Proposition \ref{lp0} to the subgroup $G/Q$ of $\GL_n(\Ff)\subset \GL_n(F)$ with $F=\overline{\BF}\cong\overline{\BF}_\ell$. This yields a normal series 
$$G\triangleright L\triangleright M\triangleright I\triangleright P\triangleright Q\triangleright \{1\}$$
such that the group $G/M$ lies in $\Jor_\ell(J'(n))$, and the group $M$ lies in $\Sigma_\ell(2^n)$ -- for the latter use that $Q$ is pro-$\ell$ and normal in $G$ and $P/Q$ is a finite $\ell$-group.
\end{proof}

The following lemma records a useful permanence property of groups in~$\Sigma_\ell(c)$ and~$\Jor_\ell(d)$.
\begin{lemm} \label{sigmalemm} 
Fix any $e\in\BN$. Then for any prime number $\ell$ the following holds:
\begin{enumerate}
\item If $H'\unlhd H$ is a normal subgroup of some $H\in\Jor_\ell(e)$, then $H'$ and $H/H'$ lie in $\Jor_\ell(e)$.
\item If $M'\unlhd M$ is a closed normal subgroup of some $M\in\Sigma_\ell(e)$, then $M'$ and $M/M'$ lie in~$\Sigma_\ell(e)$.
\end{enumerate}
\end{lemm}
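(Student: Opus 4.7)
The plan in both cases is identical in spirit: take the distinguished normal series witnessing membership in $\Jor_\ell(e)$ or $\Sigma_\ell(e)$ and transport it to $M'$ (resp.~$H'$) by intersecting, or to $M/M'$ (resp.~$H/H'$) by projecting. The defining properties are then verified step by step using the isomorphism theorems together with two stability principles: (i) the class of pro-$\ell$ groups is closed under taking closed subgroups and Hausdorff quotients; (ii) a normal subgroup (resp.\ quotient) of a finite product of non-abelian finite simple groups is itself a subproduct (i.e.\ again a product of some of the simple factors). Step (ii) will be the only nontrivial input and is needed precisely to preserve the $M/I$ layer in part (b); since finite simple groups of Lie type in characteristic $\ell$ are non-abelian, it applies.

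For part (a), let $N\trianglelefteq H$ be a normal abelian subgroup of order prime to $\ell$ with $[H:N]\le e$. For the subgroup $H'$, I would take $N':=N\cap H'$: it is normal in $H'$, abelian and of prime-to-$\ell$ order as a subgroup of $N$, and the index is controlled by
\[
[H':N'] \;=\; [H':H'\cap N] \;=\; [H'N:N] \;\le\; [H:N] \;\le\; e.
\]
For the quotient $H/H'$, I would take $\overline N:=NH'/H'$: it is normal and abelian (image of $N$), has prime-to-$\ell$ order (it is a quotient of $N$), and $[H/H':\overline N]=[H:NH']\le [H:N]\le e$.

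For part (b), fix a witnessing series $M\rhd I\rhd P\rhd\{1\}$ with $I,P$ open. For $M'$, set $I':=I\cap M'$ and $P':=P\cap M'$; both are open and normal in $M'$. Then $P'$ is a closed subgroup of the pro-$\ell$ group $P$ and hence pro-$\ell$; $I'/P'$ embeds into the finite abelian group $I/P$ and therefore is finite abelian of order prime to $\ell$ and of order at most $e$; finally $M'/I'\cong M'I/I$ is a closed normal subgroup of the finite product $M/I$, hence by (ii) it is again a product of finite simple groups of Lie type in characteristic $\ell$. For $M/M'$, set $\overline I:=IM'/M'$ and $\overline P:=PM'/M'$. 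Then $\overline P$ is pro-$\ell$ as a continuous image of $P$; the natural surjection $I/P\twoheadrightarrow I/(I\cap PM')\cong IM'/PM'=\overline I/\overline P$ shows that $\overline I/\overline P$ is again finite abelian of order prime to $\ell$ and order at most $e$; and $(M/M')/\overline I\cong M/IM'$ is a quotient of $M/I$, hence once more a product of finite simple groups of Lie type in characteristic $\ell$ by (ii).

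The only point deserving care is (ii), which I would simply state and use: in a finite direct product $S_1\times\cdots\times S_r$ of non-abelian finite simple groups, every normal subgroup is of the form $\prod_{i\in J}S_i$ for some subset $J\subseteq\{1,\dots,r\}$, and every quotient is isomorphic to $\prod_{i\notin J}S_i$; this is a standard consequence of the fact that the $S_i$ are the unique minimal normal subgroups of the product. Everything else is bookkeeping with the isomorphism theorems, so no further obstacle is expected.
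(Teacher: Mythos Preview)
Your argument is correct and follows essentially the same route as the paper: intersect (resp.\ project) the witnessing normal series and verify each layer, with the only nontrivial input being that normal subgroups and quotients of a finite product of non-abelian simple groups are again subproducts. The paper attributes this last fact to Goursat's Lemma rather than to the characterization via minimal normal subgroups, but the content is the same; your write-up is in fact more detailed than the paper's, which leaves the bookkeeping with $I$ and $P$ to the reader.
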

If $M'$ in part (b) of the lemma was not normal in $M,$ then clearly $M'$ need not lie in $\Sigma_\ell(c)$ again.

\begin{proof} 
We only give the proof of (b), the proof of (a) being similar but simpler. Let $M$ be in $\Sigma_\ell(e)$ and consider a normal series $M\triangleright I\triangleright P\triangleright \{1\}$ as in Definition \ref{sigmadef}. Then $L:=M/I$ is isomorphic to a product $L_1\times \cdots \times L_s$ for certain finite simple groups of Lie type $L_i$ in characteristic $\ell$. Suppose $M'$ is a closed normal subgroup of $M$ and define $\ol{M'}=M'I/I$. By Goursat's Lemma the groups $\ol{M'}$ and $M'/\ol{M'}$ are products of some of the $L_i$. From this it is straightforward to see that both $M'$ and $M/M'$ lie in $\Sigma_\ell(c)$.
\end{proof}

The following corollary is immediate from Lemma~\ref{sigmalemm}(b):
\begin{coro}\label{sigmal-cor}
Fix a constant $c\in\Nn$. Let $G$ be a profinite group, and for each $\ell\in L$ let $\rho_\ell\colon G\to G_\ell$ be a homomorphism of profinite groups such that $\image(\rho_\ell)\in\Sigma_\ell(c)$ for all $\ell\in L$. Then for any closed normal subgroup $N\unlhd G$ one has $\image(\rho_{\ell|N})\in\Sigma_\ell(c)$ for all $\ell\in L$.

In particular, if $H\unlhd G$ is an open subgroup, then the above applies to any normal open subgroup $N\unlhd G$ that is contained in~$H$.
\end{coro}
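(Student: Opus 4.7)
The plan is to apply Lemma~\ref{sigmalemm}(b) pointwise for each $\ell\in L$, so the argument reduces to checking that restriction along a normal subgroup preserves the class $\Sigma_\ell(c)$. Given a closed normal subgroup $N\unlhd G$, I would first observe that the image $\rho_\ell(N)=\image(\rho_{\ell|N})$ is a closed subgroup of $\rho_\ell(G)$: $N$ is compact as a closed subset of the profinite group $G$, so its continuous image $\rho_\ell(N)$ is compact, hence closed in the Hausdorff group $G_\ell$. Normality of $N$ in $G$ immediately implies normality of $\rho_\ell(N)$ in $\rho_\ell(G)$.

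With these observations in hand, Lemma~\ref{sigmalemm}(b) applied to $M:=\rho_\ell(G)\in\Sigma_\ell(c)$ and its closed normal subgroup $M':=\rho_\ell(N)$ yields $\rho_\ell(N)\in\Sigma_\ell(c)$. Since $\ell\in L$ was arbitrary, the first assertion of the corollary follows.

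For the ``in particular'' clause, the only additional input is the standard fact that every open subgroup of a profinite group is automatically closed. Hence any open normal subgroup $N\unlhd G$ contained in a prescribed open subgroup $H$ is in particular closed and normal in $G$, so the first part of the corollary applies verbatim.

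There is essentially no technical obstacle here; the only conceptual point is recognizing that the class $\Sigma_\ell(c)$ is closed under passage to closed normal subgroups, which is exactly the content of Lemma~\ref{sigmalemm}(b). Everything else is formal: continuity of $\rho_\ell$ together with compactness of closed subgroups in the profinite setting produces closed images, and normality is transported by surjective group homomorphisms.
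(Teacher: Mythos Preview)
Your proposal is correct and follows exactly the paper's approach: the paper declares the corollary ``immediate from Lemma~\ref{sigmalemm}(b)'', and you have simply unpacked why it is immediate by noting that $\rho_\ell(N)$ is a closed normal subgroup of $\rho_\ell(G)$ via compactness and the homomorphism property. There is nothing to add.
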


\section{Fundamental groups: finiteness properties and ramification}
\label{FundGps}

The purpose of this section is to recall some finiteness properties of fundamental groups and to provide some basic results on ramification. Regarding the latter we draw from results by Kerz-Schmidt and Wiesend (cf.~\cite{kerzschmidt}) and from de Jong on alterations (cf.~\cite{dejong}).

We begin with a finiteness result of which a key part  is from \cite{gp}.
\begin{prop} \label{kl}
Suppose that either $k$ is a finite field and $S$ is a smooth proper $k$-variety or that $k$ is a number field and $S$ is a smooth $k$-variety, and denote by $K=k(S)$ the function field of $S$. For $d\in\Nn$, let $\mathcal{M}_d$ be the set of all finite Galois extensions $E/K$ inside $\widetilde{K}$ such that $\Gal(E/K)$ satisfies $\Jor(d)$ and such that $E$ is unramified along $S$. Then there exists a finite Galois extension $K'/K$ which is unramified along $S$ such that $E\subset \widetilde{k}K'$ for every $E\in \mathcal{M}_d$. 
\end{prop}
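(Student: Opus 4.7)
The proof has two main steps: a group-theoretic reduction to the abelian case, followed by the application of a Katz-Lang–type finiteness result for abelian covers modulo constants.

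\emph{Step 1 (reduction to the abelian case).} I would first exploit topological finite generation. In case (1), $\pi_1(S)$ sits in the short exact sequence $1\to \pi_1(S_{\widetilde k})\to \pi_1(S)\to \Gal(\widetilde k/k)\cong\hat{\Zz}\to 1$ and both the geometric fundamental group and $\hat{\Zz}$ are topologically finitely generated, so $\pi_1(S)$ has only finitely many open subgroups of index at most $d$. Equivalently, there are only finitely many Galois extensions $L_1,\dots,L_r$ of $K$ in $\widetilde K$ of degree $\le d$ that are unramified along $S$. Let $L$ be their compositum. Then $L/K$ is finite Galois and unramified along $S$. In case (2), the same reasoning applied to $\pi_1(S_{\widetilde k})$ produces a finite Galois extension $L'/\widetilde k K$; spreading out and taking a Galois closure, one finds a finite Galois extension $L/K$ unramified along $S$ with $\widetilde k L\supset L'$.

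For $E\in\mathcal{M}_d$, pick a normal abelian subgroup $N\unlhd G:=\Gal(E/K)$ with $[G:N]\le d$. The extension $E^N/K$ is Galois of degree $\le d$ and unramified along $S$, so $E^N\subset L$ (resp. $E^N\subset \widetilde k L$ in case (2)). Setting $\widetilde E:=EL$, the extension $\widetilde E/L$ is abelian (as a subquotient of $N$), finite, and unramified along the normalization $S_L$ of $S$ in $L$. Because $L/K$ is unramified along $S$, the map $S_L\to S$ is finite étale, so $S_L$ inherits the hypotheses of the proposition (smooth and proper in case (1), smooth in case (2)).

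\emph{Step 2 (abelian extensions modulo constants).} It now suffices to find a finite Galois extension $K'/L$, unramified along $S_L$, such that every finite abelian extension $F/L$ unramified along $S_L$ satisfies $F\subset \widetilde k K'$. The key input, which I would quote from \cite{gp} (and which in case (1) is a direct consequence of the classical Katz-Lang finiteness theorem for smooth proper varieties over a finite field), is that the image
$$U:=\image\bigl(\pi_1^{\mathrm{ab}}(S_{L,\widetilde k})\longrightarrow \pi_1^{\mathrm{ab}}(S_L)\bigr)$$
is a \emph{finite} group. Granting this, any finite abelian $F/L$ unramified along $S_L$ corresponds to an open normal subgroup of $\pi_1^{\mathrm{ab}}(S_L)$, and the group $\Gal(F\widetilde k/\widetilde k L)$ is a quotient of $U$. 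Since $U$ is finite, there are only finitely many possibilities for $F\widetilde k\subset \widetilde{L}\widetilde k$; their compositum $M$ is a finite extension of $\widetilde k L$ contained in the maximal extension of $\widetilde k L$ unramified along $S_{L,\widetilde k}$. Picking finitely many generators of $M/\widetilde k L$ inside a maximal extension of $L$ unramified along $S_L$, taking a Galois closure over $L$, and finally enlarging to a Galois closure over $K$ (still finite, still unramified along $S$), one obtains the required $K'$.

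\emph{Main obstacle.} The heart of the argument is the finiteness of $U$. In case (1) this is essentially the Katz-Lang theorem, but in case (2), where $S$ is only assumed smooth over a number field, a genuine extension is required and one must appeal to the results of \cite{gp}. The descent step in case (2), where Galois extensions of $\widetilde k K$ must be realized over $K$ without losing control of ramification along $S$, is the most technically delicate point.
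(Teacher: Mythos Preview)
Your outline for case~(1) (finite base field, smooth proper $S$) is correct and matches the paper's argument: topological finite generation of $\pi_1(S)$ bounds the non-abelian layer, and Katz--Lang finishes the abelian part. The paper organizes this slightly differently (it forms the full compositum $\Omega$ first and reduces to showing $\Omega/\kappa K$ is finite), but the substance is the same.

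The gap is in case~(2). Your Step~2 asserts that for a smooth variety $T$ over a number field $k$, the image
\[
U=\image\bigl(\pi_1^{\ab}(T_{\widetilde k})\longrightarrow \pi_1^{\ab}(T)\bigr)
\]
is finite. This is false: take $T=\mathbb{G}_{m,k}$. The Kummer covers $t\mapsto t^{n}$ are abelian, unramified along $T$, and geometrically connected of degree $n$; hence $U$ surjects onto $\mathbb{Z}/n$ for every $n$ and is infinite. Consequently your reduction ``it now suffices to find $K'/L$ such that every finite abelian extension $F/L$ unramified along $S_L$ satisfies $F\subset\widetilde k K'$'' asks for something that simply does not exist in general. (There is also a smaller slip in Step~1, case~(2): you only obtain $E^{N}\subset\widetilde k L$, not $E^{N}\subset L$, so $EL/L$ need not be abelian; one must work over $\widetilde k L$ at that point.)

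The paper does not attempt a Katz--Lang analogue here. Instead it cites \cite[Prop.~2.2]{gp} directly for the claim that the compositum $\Omega$ of all $E\in\mathcal{M}_d$ is finite over $\kappa K$ when $k$ is a number field. That result exploits the arithmetic of number fields (Hermite--Minkowski type finiteness) rather than a purely geometric finiteness of abelian covers; the abelian covers one actually needs to control in $\mathcal{M}_d$ are constrained not just by being unramified along $S$, but by arising from extensions in $\Jor(d)$, and it is this extra arithmetic structure that \cite{gp} uses. Your ``main obstacle'' paragraph correctly senses that case~(2) is where the difficulty lies, but misidentifies what \cite{gp} supplies.
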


\begin{proof} Let $\Omega=\prod_{E\in\mathcal{M}_d} E$ be the compositum of all fields in $\mathcal{M}_d$. 
For every $E\in \mathcal{M}_d$ the group $\Gal(E/K)$ satisfies $\Jor(d)$ and hence 
there is a finite Galois extension $E'/K$ inside $E$ such that 
$[E':K]\le d$ and such that $E/E'$ is abelian. Define $$\Omega'=\prod_{E\in \mathcal{M}_d} E'.$$ 
Then $\Omega/\Omega'$ is abelian. Let $k_0$ (resp. $\kappa'$, resp. $\kappa$) 
be the algebraic closure of $k$ in $K$ (resp. in
$\Omega'$, resp. in $\Omega$). 
$$\xymatrix{&K\ar@{-}[r]\ar@{-}[d] & \Omega'\ar@{-}[d]\ar@{-}[r] & \Omega\ar@{-}[d]\\
k\ar@{-}[r] & k_0\ar@{-}[r] & \kappa'\ar@{-}[r] & \kappa\\ }$$

It suffices to prove the following 

{\bf Claim.} The extension $\Omega/\kappa K$ is finite.

In fact, once this is shown, it follows that the finite separable extension $\Omega/\kappa K$ has a primitive element
$\omega$. Then $\Omega=\kappa K(\omega)$ and $K(\omega)/K$ is a finite separable extension. Let $K'$ be the
normal closure of $K(\omega)/K$ in $\Omega$. Then $\tilde{k}K'\supset \kappa K'\supset \kappa K(\omega)=\Omega$ as
desired.

In the case $k=\Qq$  the claim has been shown in \cite[Proposition 2.2]{gp}. Assume from now on that $k$ is finite. It remains
to prove the claim in that case. The structure morphism $S\to\Spec(k)$ of the smooth scheme 
$S$ factors through $\Spec(k_0)$ and
$S$ is a geometrically connected $k_0$-variety. The profinite 
group $\pi_1(S\times_{k_0} \Spec({\widetilde{k}}))$ is topologically finitely generated (cf.~\cite[Thm.~X.2.9]{SGA1}) and
$\Gal(k_0)\cong \hat{\Zz}$. Thus it follows by the exact sequence (cf.~\cite[Thm.~IX.6.1]{SGA1})
$$1\to \pi_1(S\times_{k_0} \Spec({\widetilde{k}}))\to \pi_1(S)\to \Gal(k_0) {\to 1}$$
 that $\pi_1(S)$ is topologically 
finitely generated. Thus there are
only finitely many extensions of $K$ in $\tilde{K}$ of degree $\le d$ which are unramified along $S$. 
It follows that $\Omega'/K$ is a {\em finite} extension. Thus $\kappa'$ is a finite
field. If
we denote by $S'$ the
normalization of $S$ in $\Omega'$, then $S'\to S$ is finite and \'etale, hence $S'$ is a smooth proper geometrically connected $\kappa'$-variety. 
Furthermore $\Omega/\Omega'$ is abelian and unramified along $S'$.
Hence $\Omega/\kappa \Omega'$ is finite by Katz-Lang (cf.~\cite[Thm.~2, p. 306]{katzlang}). As $\Omega'/K$ is finite, it follows
that $\Omega/\kappa K$ is finite. 
\end{proof}

Our next aim is to introduce several notions of ramification, that are refinements of \cite{kerzschmidt}, useful for coverings of general schemes. Let $E/K$ be a separable algebraic extension of fields. Let $v$ be a discrete rank $1$ valuation of $K$ and
$w$ an extension of $v$ to $E$. Let $\ell$ be a prime number different from $\mathrm{char}(k(v))$. The extension $E/K$ is said to be {\em tame} (resp. {\em $\ell$-tame}) at $w$ if the residue field extension
$k(w)/k(v)$ is separable and for every finite extension $F$ of $K$ inside $E/K$ the ramification index $[w(F^\times):v(K^\times)]$ is prime to $\mathrm{char}(k(v))$ (resp. is a power of $\ell$).
If $E/K$ is Galois and the ramification group $I_w$ is of order prime to $\Char(k(v))$ (resp. pro-$\ell$), then
the residue field extension $k(w)/k(v)$ is automatically separable and $E/K$ is tame (resp. $\ell$-tame) at $w$.
The extension $E/K$ is {\em tame} 
(resp. {\em $\ell$-tame})
at $v$ if the extension $E/K$ is tame (resp. $\ell$-tame) at every extension $w$ of $v$ to $E$.

We fix some terminology for curves. Let $k$ be a field of characteristic $p\ge 0$. A {\em curve} 
$C$ over $k$ is a smooth (but not necessarily projective) $k$-variety of dimension $1$. Denote by $P(C)$ the smooth projective model of the function field $k(C)$ (the model is unique up to isomorphism).~Then $P(C)$ contains $C$, and we set $\partial C:=P(C)\setminus C$.  Let $\ell$ be a prime number different from $p$. An
\'etale cover $C'\to C$ is called {\sl tame} (resp. {\sl $\ell$-tame}) if for any point $x\in\partial C$ with valuation $v_x$ of $k(C)$ and every connected component $Z'$ of $C'$, the extension $k(Z')/k(C)$ is tame (resp. $\ell$-tame)  at~$v_x$.

\begin{defi}\label{ramif-def} 
Let $S$ be a regular variety over a field $k$ of characteristic $p\ge0$. Let $f: T\to S$ be an \'etale cover.
Let $\ell$ be a prime different from~$p$.
\begin{enumerate}
\item The cover $f: T\to S$ is {\em curve tame} (resp. {\em curve $\ell$-tame}) if for all $k$-morphisms  $\phi\colon C\to S$ with $C$ a smooth curve over $k$, the base changed cover $f_C\colon C\times_ST\to C$ is tame (resp. $\ell$-tame).
\item Assume that $S$ is an open subscheme of a regular projective scheme $\overline S$ such that $D=\overline S\setminus S$ is a normal crossings divisor (NCD). Then $f: T\to S$ is {\em tame} (resp. {\em $\ell$-tame}) 
if for every discrete valuation $v$ of $K$ defined by a generic point of $D$ and every connected component $Z$ of $T$ the extension $k(Z)/k(S)$ is tame (resp. $\ell$-tame) at $v$.
\end{enumerate}

We extend the above notions to profinite \'etale covers by saying that such a cover is {\em pro-$\ell$ curve tame}
or {\em pro-$\ell$ tame} if these conditions hold for all subcovers of finite degree.
\end{defi}

\begin{defi}\label{ramif-def2}
Let $k$ be a field of characteristic $p\ge0$. Let $K/k$ be a finitely generated extension and $E/K$ an algebraic extension. 
Let $\ell$ be a prime different from~$p$.
\begin{enumerate}
\item The extension $E/K$ is {\em generically \'etale} if there exists a regular $k$-variety $S$ with function field
$K$ such
that for every finite extension $F$ of $K$ inside $E/K$ the normalization of $S$ in $F$ is \'etale over $S$.  
\item The extension $E/K$ is {\em divisor tame} (resp. {\em divisor $\ell$-tame}) if it is generically \'etale and
tame (resp. $\ell$-tame) at every discrete rank $1$ valuation $v$ of $K$ which is trivial on
$k$.\end{enumerate}
\end{defi}


\begin{Def}\label{tamelyramrep-def}
{\sl Suppose $k$ is a field of characteristic $p\ge 0$ and $K/k$ is a finitely generated extension. Let $\ell$ be a prime number different from $p$. 
We call a homomorphism $\Gal(K)\to G$ of profinite groups {\em $\ell$-tame over $k$} if 
the fixed field $E:=(K_s)^{\kernel(\rho)}$ is divisor $\ell$-tame over~$K$.}
\end{Def}

\begin{Rem} \label{divtame-rem} Let $k$ be a field of characteristic $p\ge 0$ and $S$ a regular $k$-variety. Let $f: T\to S$ be
a connected \'etale cover. Let $K=k(S)$ and $E=k(T)$. Let $\ell$ be a prime number different from $p$. 
In \cite[p.~12]{kerzschmidt} the cover $f\colon T\to S$ is defined to be {\sl divisor tame} if for any normal compactification $\overline S$ of $S$ and a point $s\in \overline S\setminus S$ with $\codim_{\ol S}s=1$, the extension $E/K$ is tame at the rank $1$ valuation $v_s$ on $K$. We claim that the cover $T/S$ is divisor tame in the sense 
of \cite[p.~12]{kerzschmidt} if and only if the extension $E/K$ is divisor tame in the sense of definition \ref{ramif-def2}.

Clearly our notion of divisor tameness implies that of \cite{kerzschmidt}. For the converse we follow closely the argument in \cite[Thm.~4.4]{kerzschmidt} proof of (ii)$\Rightarrow$(iii) though with different references. Let $w$ be a valuation of $E$ that is trivial on $k$ and denote by $v$ its restriction to $K$. Let $\overline S$ be a normal compactification of $S$, which exists by the theorem of Nagata \cite{Luetkebohmert}. By \cite[Prop.~6.4]{Vaquie}, there exists a blow-up $\overline S'$ of $\overline S$ with center outside $S$ such that $v$ is the valuation of a codimension $1$ point $s\in \overline S'$. By normalization, we may further assume that $\overline S'$ is normal. Both operations, blow-up and normalization, do not affect $S$, and so we may take for $\overline S$ a normal compactification of $S$ that contains a codimension $1$ point $s$ with valuation $v=v_s$. 
But then the divisor tameness of \cite{kerzschmidt} implies that $w/v$ is at most tamely ramified.
\end{Rem}
 
The following result is a variant of parts of \cite[Thm.~4.4]{kerzschmidt}:
\begin{Prop}\label{KS-variant}
Let $k,S,T,K,E,f, \ell$ be as in Remark~\ref{divtame-rem}. Then the following hold:
\begin{enumerate}\advance\itemsep by -.4em
\item The cover $T/S$ is curve-tame if and only if the extension $E/K$ is divisor tame.
\item Suppose $E/K$ is Galois. Then the cover $T/S$ is curve-$\ell$-tame if and only if the extension $E/K$ is divisor $\ell$-tame.
\item If $S$ is an open subscheme of a regular projective scheme $\overline S$ such that $D=\overline S\setminus S$ is a NCD, then both conditions from (b) are equivalent to $T/S$ being $\ell$-tame.
\end{enumerate}
The assertions (a)--(c) extend in an obvious manner to profinite covers.
\end{Prop}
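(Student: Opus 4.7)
The plan is to treat (a)--(c) sequentially. Throughout, passing to limits reduces the pro-$\ell$ version of each statement to the finite case, so I may assume $E/K$ is a finite Galois extension with group $G=\Gal(E/K)$. The essential point is that in each statement the only refinement over \cite[Thm.~4.4]{kerzschmidt} is the sharpening of the order of inertia groups from ``prime-to-$p$'' to ``power of~$\ell$'': the work already done in \cite{kerzschmidt} provides the prime-to-$p$ (i.e.\ tame) conclusion in all cases, and I only need to track the $\ell$-power condition on inertia orders. Part (a) then follows at once: after translating our notion of divisor tameness into that used in \cite{kerzschmidt} by Remark~\ref{divtame-rem}, it is precisely the equivalence (ii)$\Leftrightarrow$(iii) of \cite[Thm.~4.4]{kerzschmidt}.

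For (b), using (a) I need only show the equivalence of: (i) $I_w$ is an $\ell$-group for every discrete rank-one valuation $v$ of $K$ with prolongation $w$ in $E$; and (ii) for every $k$-morphism $\phi\colon C\to S$ from a smooth curve and every $x\in\partial C$, the ramification index of $C\times_S T\to C$ above $x$ is a power of $\ell$. For (i)$\Rightarrow$(ii), if $\phi$ is constant, then $f_C$ is finite \'etale and there is nothing to prove; otherwise $\phi$ is generically finite onto its image, yielding an embedding $K\hookrightarrow k(C)$, and the valuation $v_x$ restricts to a discrete rank-one valuation $v$ of $K$ which, by Remark~\ref{divtame-rem}, is divisorial on some normal model. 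Since $k(C\times_S T)/k(C)$ is obtained from $E/K$ by base change, the Galois group along any component is a subgroup of $G$ and the local inertia at any prolongation $w_x$ of $v_x$ is contained in $I_w$, which is an $\ell$-group by hypothesis. For (ii)$\Rightarrow$(i) I construct, given a divisor valuation $v$ of $K$, a smooth curve $\phi\colon C\to S$ with a point $x\in\partial C$ whose valuation restricts to $v$ and such that the local inertia in $C\times_S T\to C$ at a point above $x$ equals $I_w$: I choose a normal compactification $\overline S$ of $S$ containing the codimension-one center of $v$ and cut with general hyperplane sections so as to obtain a smooth curve meeting that center transversally at a single smooth point; then $v_x=v$ and the inertia is preserved.

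Part (c) follows from (b) by a purity argument. One direction is immediate, since the generic points of $D$ are in particular divisor valuations of $K$. Conversely, assume $T/S$ is $\ell$-tame in the sense of (c) and let $v$ be any discrete rank-one valuation of $K$: either the center of $v$ lies in $S$ (where $f$ is already \'etale, hence unramified) or it lies in $D$. In the latter case, Abhyankar's lemma applied to the monomial structure of $v$ in the uniformizers of the components of $D$ passing through its center shows that $I_w$ embeds into the product of the inertias at the generic points of those components, each of which is an $\ell$-group by hypothesis; hence so is $I_w$. The main technical obstacle will be producing the curves in the (ii)$\Rightarrow$(i) direction of (b): since $k$ is only assumed to be algebraically closed (of arbitrary characteristic), Bertini-type results have to be used in a form valid over such fields, or else the curve must be constructed first in a completed local model of $v$ and then algebraized by an approximation argument.
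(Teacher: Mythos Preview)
There is a genuine gap in your treatment of (b). You write that a non-constant $\phi\colon C\to S$ ``yields an embedding $K\hookrightarrow k(C)$,'' but this is false whenever $\dim S>1$: the image of a curve is at most one-dimensional, so $\phi$ is never dominant and $K=k(S)$ does not embed in $k(C)$. Consequently the phrase ``the valuation $v_x$ restricts to a discrete rank-one valuation $v$ of $K$'' has no meaning, nor does ``$k(C\times_ST)/k(C)$ is obtained from $E/K$ by base change.'' The same confusion underlies your (ii)$\Rightarrow$(i): a curve through the centre of $v$ on a compactification of $S$ does not furnish a field extension of $K$, so there is no restriction $v_x|_K$ and hence no direct comparison of inertia groups. (As an aside, $k$ is an arbitrary field in Definition~\ref{ramif-def}, not algebraically closed.)

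The paper circumvents this by never attempting a direct inertia comparison. For curve-$\ell$-tame $\Rightarrow$ divisor $\ell$-tame it argues by contradiction: passing to an intermediate extension $K_1\subset K_2\subset E$ of prime degree $\ell'\neq\ell$ with $E/K_1$ totally ramified at $w$, it notes that curve-$\ell$-tameness forces the cover $T_2\to T_1$ to be curve-\emph{unramified}, and then invokes \cite[Prop.~4.1]{kerzschmidt} (curve-unramified over a regular base implies unramified) to contradict the ramification of $K_2/K_1$ at~$w$. For divisor $\ell$-tame $\Rightarrow$ curve-$\ell$-tame it again argues by contradiction, reduces to $[E:K]$ a prime $\ell'\neq\ell$, and follows the Parshin-chain construction of \cite[Thm.~4.4, (v)$\Rightarrow$(i)]{kerzschmidt} together with \cite[Lem.~3.5]{kerzschmidt} to pass from a ramified higher-rank valuation down to a ramified discrete rank-one valuation of $K$. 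These indirect devices from \cite{kerzschmidt} are exactly what is needed to link curves with rank-one valuations on a higher-dimensional~$S$; your direct route bypasses them and breaks for that reason. Your approach to (c) also differs: you argue $\ell$-tame $\Rightarrow$ divisor $\ell$-tame directly via Abhyankar, whereas the paper shows $\ell$-tame $\Rightarrow$ curve-$\ell$-tame via Abhyankar applied on each test curve and then closes the loop using~(b). Your route for (c) is plausible, but as written the full three-way equivalence still rests on your flawed~(b).
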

\begin{proof} 
By Remark~\ref{divtame-rem}, part (a) of the proposition follows directly from the equivalence (i)$\Leftrightarrow$(ii) in \cite[Thm.~4.4]{kerzschmidt}.

For the proof of (b), suppose first that $T/S$ is curve-$\ell$-tame and assume that $E/K$ is not divisor $\ell$-tame. By (a) we know that $E/K$ is divisor tame. So let $w$ be a valuation of $E$ at which $E/K$ is not divisor $\ell$-tame. Denote by $K_1\subset K_2\subset E$ extensions of $K$ such that $E/K_1$ is totally ramified (and Galois) at $w$ and $K_2/K_1$ is of prime degree $\ell'\neq\ell$. As in Remark~\ref{divtame-rem}, there exists a normal compactification $\overline S$ of $S$ and a codimension $1$ point $s$ of $\overline S\setminus S$ that has a preimage $t$ in the normalization $\overline T$ of $\overline S$ in $E/K$ with $v_t=w$. We define $T_i,\overline T_i$, $i=1,2$, as the normalizations of $S$ or $\overline S$ in $K_i/K$, respectively. We claim that $T/T_1$ is curve-$\ell$-tame. 

To see this, observe first, that as with curve-tameness, it is a simple matter of drawing a suitable commutative diagram to see that curve-$\ell$-tameness is stable under base change. In particular the base change $T\times_ST_1\to T_1$ is curve-$\ell$-tame. However, considering the commutative fiber product diagram
$$\xymatrix@C+2pc{T\ar[d]\ar[dr]\ar@{-->}@/^/[r]^s&\ar[l] T\times_ST_1 \ar[d]\\
S&\ar[l] T_1\rlap{,}\\
}$$
we see that there is a canonical splitting $s\colon T\to T\times_ST_1$ over $T_1$. Hence $T$ is a connected component of $T\times_ST_1$ and as such the restriction $T\to T_1$ of the morphism $T\times_ST_1\to T_1$ inherits curve-$\ell$-tameness.

Having the claim at our disposal, the hypothesis $[K_2:K_1]=\ell'$ yields that for any curve $C_1$ mapping to $T_1$, the induced cover $C_1\times_{T_1}T_2\to C_1$ is everywhere unramified along $\partial C_1$. Now $\overline T_1$ is regular in codimension $1$, hence the regular locus $W_1$ contains $T_1$ as well as the divisor corresponding to $w|_{K_1}$. Let $W_2$ be its preimage in $\overline T_2$. Now by \cite[Prop.~4.1]{kerzschmidt}, which can be paraphrased as: {\em curve-unramifiedness implies unramifiedness over a regular base}, it follows that $W_2\to W_1$ is \'etale. But then $K_2/K_1$ is \'etale along $w$, a contradiction.

For the converse of (b) suppose that $E/K$ is divisor $\ell$-tame. We assume that there is a $k$-morphism $C\to S$ for $C$ a smooth curve such that $\pi\colon C\times_ST\to S$ is not $\ell$-tame along $\partial C$. Since $\Gal(E/K)$ acts faithfully on $C\times_ST\to C$, by passing to a subgroup and thus an intermediate extension of $E/K$ we may assume that $C\times_ST$ is irreducible. Since then $\Gal(E/K)$ is also the Galois group of the cover $\pi$, some further straightforward reductions allow us to assume that $[E:K]=\ell'\neq\ell$ for some prime $\ell'$ (which by (a) is different from $p$), and that $\Gal(E/K)\cong\BZ/\ell'$ is the inertia group above some valuation of $k(C)$. Following the argument in the proof of \cite[Thm.~4.4]{kerzschmidt} (v)$\Rightarrow$(i), we can find a discrete rank $d=\dim S$ valuation of $E$ that is ramified of order $\ell'$ (via a Parshin chain through the image of $\Spec k(C)$). But \cite[Lem.~3.5]{kerzschmidt} says that $E/K$ is ramified at a discrete rank $d$ valuation if and only if it is ramified at a discrete rank $1$ valuation. We reach a contradiction because by hypothesis $E/K$ is unramified at all discrete rank $1$ valuations.

Finally, we prove (c). It is clear that divisor $\ell$-tameness implies $\ell$-tameness. The proof that $\ell$-tameness implies curve $\ell$-tameness follows from the argument given in \cite[Prop.~4.2]{kerzschmidt}: there it is shown that tameness implies curve tameness. Consider a curve $C$ over $k$ and a morphism $\phi\colon C\to S$ over $k$. Then $\phi$ extends to a morphism $\ol\phi\colon P(C)\to \overline S$. Denote by $\ol{\phi(C)}$ the closure of $\phi(C)$ in $\overline S$. The ramification of $T\times_SC\to C$ occurs precisely at those points of $P(C)$ that under $\ol\phi$ map to $D\cap \ol{\phi(C)}$. To analyze the ramification, the proof of  \cite[Prop.~4.2]{kerzschmidt} appeals to Abhyankar's lemma. In the notation of {\sl loc.~cit.}, the ramification is then governed by indices $n_i$, $i=1,\ldots, r$, that are prime to $p$. By the $\ell$-tameness of $T\to S$, the $n_i$ must all be powers of~$\ell$. But then 
{\sl loc.~cit.} implies that $T\times_SC\to C$ is $\ell$-tame, and this completes the proof.
\end{proof}

Our formulation of divisor-tameness easily transfers under rather general field extensions:
\begin{Lem}\label{CodimOneLTameIsInherited}
Suppose that $\Char(k)=p>0$ and consider the following inclusions of fields:
$$\xymatrix{K \ar@{^{ (}->}[r] & K'\\
\ar@{_{ (}->}[u] k \ar@{^{ (}->}[r]& k'\ar@{_{ (}->}[u]\\ }$$
If $E/K$ is Galois and divisor $\ell$-tame over $k$, then so is $EK'/K'$ over~$k'$. 
\end{Lem}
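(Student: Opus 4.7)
The plan is to reduce to the case of a finite Galois extension $E/K$ (since pro-$\ell$ divisor tameness is checked on finite subextensions) and then handle each discrete rank $1$ valuation $v'$ of $K'$ by Galois theory of the local completions. Fix such $v'$, let $w'$ be an extension to $EK'$, and set $v:=v'|_K$, $w:=w'|_E$. The key identity is $EK'_{w'}=E\cdot K'_{v'}$ inside $EK'_{w'}$: the right-hand side is a finite extension of the complete field $K'_{v'}$, hence complete, and contains $EK'$, so it equals the completion. Standard Galois theory then gives an injection
\[
\Gal(EK'_{w'}/K'_{v'}) \;\cong\; \Gal(E/E\cap K'_{v'}) \;\hookrightarrow\; \Gal(E/K).
\]

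If $v$ is nontrivial on $K$, then $w$ is a discrete rank $1$ valuation of $E$ above $v$, and the above injection factors through the decomposition group $\Gal(E_w/K_v)=D_w$. Any $\sigma$ in the inertia $I_{w'}$ acts trivially on $k(w')$, hence on the subfield $k(w)$, so $\sigma|_{E_w}\in I_w$. By the divisor $\ell$-tameness hypothesis $|I_w|$ is a power of $\ell$, and therefore so is $|I_{w'}|$. Separability of $k(w')/k(v')$ then follows from separability of $k(w)/k(v)$, by decomposing $E_w/K_v$ into its maximal unramified part $L_0/K_v$ (with $k(L_0)=k(w)$) and the tame totally ramified part $E_w/L_0$, and tracking residue fields under the base change to $K'_{v'}$.

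If $v$ is trivial on $K$, then $w=w'|_E$ is also trivial because $E/K$ is algebraic: an element $\alpha\in E^\times$ with $w(\alpha)\ne 0$ would contradict its minimal equation over $K$, whose nonzero coefficients lie in $K\subset\mathcal{O}_{v'}^\times$. Hence the residue map yields an embedding $E\hookrightarrow k(w')$. Any $\sigma\in I_{w'}$ acts trivially on $k(w')$, hence on this copy of $E$; but $\sigma$ is determined by $\sigma|_E$ under the identification above, so $\sigma=1$. Thus $I_{w'}=1$ and $EK'_{w'}/K'_{v'}$ is unramified, which is in particular divisor $\ell$-tame.

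The main subtle point is the second case: the divisor $\ell$-tameness hypothesis on $E/K$ provides no information at trivial valuations of $K$, so one must exploit the triviality of $w$ on $E$ together with the Galois-theoretic identification to force $I_{w'}=1$. The pro-$\ell$ statement follows by applying the finite case to each finite Galois subextension of $E/K$.
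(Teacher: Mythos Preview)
Your proof is correct and follows essentially the same strategy as the paper's: reduce to the finite Galois case, pick a discrete rank~$1$ valuation $w'$ on $EK'$ (trivial on $k'$), restrict to $v',w,v$, and split into the cases where $v$ is trivial or nontrivial on~$K$. In the trivial case both you and the paper observe that $w$ is then trivial on $E$, so $E$ embeds in the residue field and the extension is unramified; in the nontrivial case both arguments exploit that the local Galois group of $EK'_{w'}/K'_{v'}$ injects into that of $E_w/K_v$.

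The only organizational difference worth noting is how separability of the residue extension $k(w')/k(v')$ is handled. The paper disposes of it once at the outset via the Cohen structure theorem, realizing $k(w')/k(v')$ as a subextension of the separable extension $E'_{w'}/K'_{v'}$, and thereafter works only with value groups, further subdividing the nontrivial case into unramified and totally ramified pieces. You instead work throughout with the injection of inertia groups, which is a bit more direct, and deduce separability in the nontrivial case from the tame decomposition of $E_w/K_v$; in the trivial case your claim that $I_{w'}=1$ forces the extension to be unramified (with separable residue field) is correct because $EK'_{w'}/K'_{v'}$ is Galois. Both routes are standard and equally valid.
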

\begin{proof}
It clearly suffices to prove the lemma in the case where $E/K$ is finite Galois. Then $E':=EK'$ is finite Galois over~$K'$. Let $w'$ be any discrete rank one valuation of $E'$ trivial on $k'$ and denote by $w$ its restriction to $E$, by $v'$ the restriction to $K'$ and by $v$ the restriction to $K$. We need to show that $[w'({E'}^*):v'({K'}^*)]$ is a power of $\ell$ and that the residue extension is separable. The latter can be taken care of at once: The extension $E/K$ is finite separable. Hence so is $E'/K'$, because a primitive element of $E/K$ will be such an element for $E'/K'$. For the same reason, separability is preserved by the extensions of completions $E'_{w'}/K'_{v'}$ at $v'$. Now by the Cohen structure theorem, the extension of residue fields is a subextension of $E'_{w'}/K'_{v'}$, and as such it must be separable. It remains to consider the index of the value groups.

Suppose first that $v(K^*)=0$. Then we must have $w(E^*)=0$, since otherwise, if  $\alpha\in E$ would satisfy $w(\alpha)\neq0$, then the sequence $(\alpha^n)_{n\in\BZ}$ would be linearly independent over $K$, a contradiction. This means that under the residue map of $E'$, the subfield $E$ is mapped injectively to the residue field of $E'$ at $w'$. But then $E/K$ defines purely a residue extension of $E'/K'$, and thus $w'({E'}^*)=v'({K'}^*)$.

Next assume that $w$ is non-trivial, so that by the above $v$ is non-trivial as well. We pass to the completions and note that $E_w/K_v$ and $E'_{w'}/K'_{v'}$ remain Galois extensions. By the Cohen structure theorem, $K_v$ now contains the residue field $k(v)$, and $E_v$ the residue field $k(w)$. In particular, $F=k(w) K_v$ is an unramified extension of $K_v$ and $E_w/F$ is totally ramified. We may thus consider these two cases separately. Suppose first that $E_w/K_v$ is unramified. Then $E'_{w'}=K'_{v'}k(w)$ where clearly $k(w)$ defines a separable extension of the residue field of $K'_{v'}$. Hence $E'_{w'}/K'_{v'}$ is unramified. We conclude $w'({E'}^*)=v'({K'}^*)$ which completes the argument.

Finally let $E_w/K_v$ be totally ramified. By our hypothesis, the extension $E/K$ is at most $\ell$-order ramified at $w$. It follows that $E_w/K_v$ is a Galois extension with $\Gal(E_w/K_v)$ an $\ell$-group. Now $\Gal(E'_{w'}/K'_{v'})$ injects into $\Gal(E_w/K_v)$ because $E'=KE$, and thus $[E'_w:K'_v]$ is a power of $\ell$. Since the order of  $w'({E'}^*)/v'({K'}^*)$ divides the degree $[E':K']$, the proof is complete.
\end{proof}

Combining ramification properties with finiteness properties of fundamental groups, we obtain the following criterion for a family of representations of $\Gal(K)$ with images in $\Jor_\ell(d)$, or with abelian images of bounded order, to become trivial over $\Gal(K'\wt k)$ for some finite $K'/K$. 

\begin{prop}\label{FiniteAndRamif-prop}
Let $k$ be a field and let $S/k$ be a normal $k$-variety with function field $K$. Let $L$ be a set of prime numbers which does not contain $\Char(k)$. Suppose $(\rho_\ell\colon \pi_1(S)\to G_\ell)_{\ell\in L}$ is a family of continuous homomorphisms such that if $\Char(k)>0$ each $\rho_\ell$ is $\ell$-tame. Under either of the following two conditions there exists a finite extension $K'$ of $K$ such that for all $\ell\in L$ we have $\rho_\ell(\Gal(K'\wt k))=\{1\}$.
\begin{enumerate}
\item The field $k$ is finite or a number field and there exists a constant $d\in\BN$ such that for each $\ell\in L$ the group $\image(\rho_\ell)$ lies in $\Jor_\ell(d)$. 
\item The field $k$ is algebraically closed and there exists a constant $c\in\BN$ such that for each $\ell\in L$ the group $\image(\rho_\ell)$ is of order at most~$c$.
\end{enumerate}
\end{prop}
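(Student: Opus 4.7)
The plan is to split the argument according to the hypothesis on $k$ and its characteristic. In every case I first reduce to $S$ smooth: since $k$ is perfect the smooth locus is open dense in $S$, and each $\rho_\ell$ factors through $\pi_1(U)$ for every open dense $U\subset S$.

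In case (a), if $\Char(k)=0$ then $k$ is a number field and the fixed fields $E_\ell:=K_s^{\ker(\rho_\ell)}$ are finite Galois over $K$ with Galois group in $\Jor_\ell(d)\subset\Jor(d)$ and unramified along $S$; Proposition~\ref{kl} then directly produces a finite Galois $K'/K$ with $E_\ell\subset\wt k K'$, hence $\rho_\ell(\Gal(K'\wt k))=\{1\}$ for every $\ell$. If $\Char(k)=p>0$ (so $k$ is finite), Proposition~\ref{kl} requires $S$ smooth \emph{proper}, so I invoke de~Jong's alteration theorem to produce a smooth projective variety $S''$ over a finite extension $k''/k$ together with a proper surjective, generically \'etale morphism $S''\to S$; setting $K_1:=k''(S'')$, the restrictions $\rho_\ell\resg{\Gal(K_1)}$ factor through $\pi_1(S'')$ (since the pullback of an \'etale cover of a normal scheme along a generically \'etale morphism is, after normalization, still \'etale), and their images are again in $\Jor_\ell(d)$. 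Applying Proposition~\ref{kl} to $S''/k''$ gives a finite Galois $K'/K_1$ with $E_\ell K_1\subset \wt k K'$, and since $K_1/K$ is finite and $\wt{k''}=\wt k$ this $K'$ is the desired finite extension of $K$.

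In case (b), if $\Char(k)=0$ then $k=\wt k$ and $\pi_1(S)$ is topologically finitely generated (use resolution of singularities to compactify $S$ by a normal crossings divisor, and invoke finite generation of $\pi_1$ of a smooth variety over an algebraically closed field of characteristic zero); consequently $\pi_1(S)$ has only finitely many open subgroups of index $\leq c$, and their intersection cuts out a finite Galois extension $K'/K$ which simultaneously kills every $\rho_\ell$. If $\Char(k)=p>0$ I apply de~Jong to obtain a generically \'etale $S'\to S$ with $S'$ smooth over $k$ and an open immersion $S'\hookrightarrow\bar S'$ into a regular projective $\bar S'/k$ with $D:=\bar S'\setminus S'$ a normal crossings divisor; set $K_1:=k(S')$. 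The restrictions $\rho_\ell\resg{\Gal(K_1)}$ factor through $\pi_1(S')$ and remain $\ell$-tame by Lemma~\ref{CodimOneLTameIsInherited}. The key observation is a dichotomy on $\ell$: if $\ell>c$, every inertia group of $\rho_\ell\resg{\Gal(K_1)}$ at a generic point of $D$ is at once a power of $\ell$ (by $\ell$-tameness) and a divisor of $|\image(\rho_\ell)|\leq c$, hence trivial; by purity of the branch locus $\rho_\ell\resg{\Gal(K_1)}$ then factors through $\pi_1(\bar S')$, which is topologically finitely generated by \cite[Thm.~X.2.9]{SGA1}, so the intersection of its open subgroups of index $\leq c$ defines a finite Galois extension $K_1''/K_1$ killing every such $\rho_\ell$. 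For the finitely many $\ell\in L$ with $\ell\leq c$, each $\rho_\ell\resg{\Gal(K_1)}$ individually defines a single finite Galois extension $E_\ell/K_1$ of degree $\leq c$; the compositum $K'$ of $K_1''$ with these finitely many $E_\ell$ is then a finite extension of $K$ with $\rho_\ell(\Gal(K'))=\{1\}$ for every $\ell\in L$.

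The main obstacle lies in the positive characteristic settings, where alterations, fundamental groups, and ramification must be controlled simultaneously. Concretely, one must check that de~Jong's alteration yields a model through which the restricted representations still factor (via generically \'etale base change of \'etale covers over normal bases) and, in case (b), that $\ell$-tameness survives the passage to $K_1$ (this is exactly Lemma~\ref{CodimOneLTameIsInherited}); the preservation of tameness is what makes the dichotomy $\ell>c$ versus $\ell\leq c$ effective, reducing an a~priori infinite family of representations to a single uniform piece (handled via the finitely generated $\pi_1(\bar S')$) plus finitely many ad~hoc extensions.
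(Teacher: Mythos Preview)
Your treatment of case~(b) in both characteristics and of case~(a) in characteristic~$0$ is correct and matches the paper's argument (indeed, in characteristic~$0$ your direct appeal to Proposition~\ref{kl} after passing to the smooth locus is slightly more economical than the paper, which alters regardless of the characteristic).

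There is, however, a genuine gap in case~(a) when $\Char(k)=p>0$. You write that de~Jong's theorem produces a \emph{smooth projective} variety $S''$ together with a proper surjective generically \'etale morphism $S''\to S$. This is not what the theorem gives: when $S$ is not proper, the alteration yields only a smooth $S''$ that is \emph{open} in a smooth projective $\bar S''$ with normal crossings complement $D''$. The restrictions $\rho_\ell\resg{\Gal(K_1)}$ factor through $\pi_1(S'')$, but Proposition~\ref{kl} in positive characteristic requires a smooth \emph{proper} base, and you have not explained why they factor through $\pi_1(\bar S'')$. In particular, you never use the $\ell$-tameness hypothesis in this sub-case, and without it the extension across $D''$ fails.

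The fix is exactly the dichotomy you already carried out in case~(b), $\Char(k)=p>0$: first kill the finitely many $\ell\le d$ by a preliminary finite extension. For the remaining $\ell>d$, observe that a group in $\Jor_\ell(d)$ has order prime to~$\ell$ (the normal abelian subgroup has order prime to~$\ell$ by definition, and the quotient has order $\le d<\ell$). Hence the $\ell$-power inertia along $D''$ forced by $\ell$-tameness (which persists after passing to $K_1$ by Lemma~\ref{CodimOneLTameIsInherited}) must be trivial, and purity of the branch locus lets each $\rho_\ell\resg{\Gal(K_1)}$ factor through $\pi_1(\bar S'')$. Now Proposition~\ref{kl} applies. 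This is precisely the paper's route; the paper in fact runs this tameness-plus-purity step uniformly for both (a) and (b) before branching into Proposition~\ref{kl} versus the finiteness of $\pi_1(\bar S'')^{\ab}/c$.
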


\begin{proof} 
First we replace $K$ by a finite Galois extension and $S$ be the normalization in this extension, so that we can assume that $\rho_\ell(\pi_1(S))=\{1\}$ for all $\ell \le d$ or $\ell\le c$, respectively. Next we apply the result of de Jong on alterations (cf.~\cite[Thm.~4.1, 4.2]{dejong}). It provides us with a finite extension $k'$ of $k$, a smooth projective geometrically connected $k'$-variety $T'$, a non-empty open subvariety $S'$ of $T'$ and an alteration $f: S'\to S$, such that furthermore $D':=T'\setminus S'$ is a normal crossing divisor. We define $K'$ to be the function field of~$S'$, so that $K'/K$ is finite. (If $k$ is perfect, we could also assume that $K'/K$ is separable.)

Next observe, that if $\Char(k)$ is positive, then the (divisor) $\ell$-tameness of $\rho_\ell$ implies the (divisor) $\ell$-tameness of $\rho_\ell|_{\pi_1(S')}$ by Lemma~\ref{CodimOneLTameIsInherited}, and thus, by Proposition~\ref{KS-variant}, for each $\ell$ the extension $K'_\ell=(K'_s)^{\kernel(\rho_\ell|_{\pi_1(S')})}$ of $K'$ is $\ell$-tame. Because of the first reduction step in the previous paragraph, this implies that each $\rho_\ell|_{\pi_1(S')}$ is unramified at the generic points of $D'$. Purity of the branch locus (cf.~\cite[X.3.1]{SGA1}) now implies that all $\rho_\ell|_{\pi_1(S')}$ factors via $\pi_1(T')$. 

Now, in case (a), the assertion of the proposition follows from Proposition~\ref{kl}. In case (b) we use that, since $k$ is algebraically closed, the fundamental group $\pi_1(T')$ is topologically finitely generated (cf.~\cite[Thm.~X.2.9]{SGA1}), and that furthermore, if $\Char(k)=0$, the same holds true for $\pi_1(S')$ (cf.~\cite[II.2.3.1]{SGA7}). Hence there are only
finitely many possibilities for the fields $K_\ell'$ and $\prod_\ell K_\ell'$ is a finite extension of $K'$.
This completes the proof in case~(b).
\end{proof}

\section{An independence criterion}
\label{indep-sec}

From now on, let $k$ be any field, let $K/k$ be a finitely generated field extension and let $L$ be a set of prime numbers not containing $p := \Char(k)$. For every $\ell\in L$ let $G_\ell$ be a profinite group and let $\rho_\ell: \Gal(K)\to G_\ell$ be a continuous homomorphism. 

If for all $\ell\in L$ the groups $\image(\rho_\ell)$ are $n$-bounded at $\ell$, then by Theorem~\ref{lp-main} we have a short exact sequence $1\to M_\ell\to \image(\rho_\ell)\to H_\ell\to 1$ with $H_\ell\in \Jor_\ell(d)$ for $d=J'(n)$ and $M_\ell\in\Sigma_\ell(2^n)$. At the end of the previous section we have seen that a combination of tameness of ramification and results on fundamental groups allow one to control the $H_\ell$ in a uniform manner. In this section we shall show how to control $M_\ell$ in a uniform manner, if one has a uniform control on ramification. We begin by introducing the necessary concepts and then give the result.
\medskip

To $(\rho_\ell)_{\ell\in L}$ we attach the family $(\wt\rho_\ell)_{\ell\in L}$ by defining each $\wt\rho_\ell$ as the composite homomorphism 
$$\wt\rho_\ell\colon\Gal({K}) \buildrel\rho_\ell\over\longrightarrow \image(\rho_\ell)\to
\image(\rho_\ell)/Q_\ell$$ 
where $Q_\ell$ is the maximal normal pro-$\ell$ subgroup of $\im(\rho_\ell)$. Note that if $\rho_\ell$ is an $\ell$-adic representation, then $\wt\rho_\ell$ is essentially the semisimplification of the mod $\ell$ reduction of $\rho_\ell$.

\begin{Def}\label{rsconditions}
{\sl \begin{enumerate}
\item The family $(\rho_\ell)_{\ell\in L}$ is said to satisfy the condition $\CR(k)$, 
if there exist a finite extension $k'$ of $k$, a finite extension $K'/Kk'$ and a smooth $k'$-variety $U'$ with function field $K'$ such that for every $\ell\in L$ the homomorphism $\wt\rho_\ell \resg{\Gal(K')}$ is unramified along~$U'$.

\item The family $(\rho_\ell)_{\ell\in L}$ is said to satisfy the condition 
$\CS(k)$, if it satisfies $\CR(k)$ and if one can choose the field $K'$ for $\CR(k)$ such that each $\wt\rho_\ell|_{\Gal(K')}$ is $\ell$-tame.
\end{enumerate}}
\end{Def}

The condition $\CR(k)$ says that each member $\wt\rho_\ell$ is up to pro-$\ell$ ramification potentially generically \'etale in a uniform way. The condition $\CS(k)$ is a kind of semistability condition.  

\begin{exam} \label{abvarex}
{\em 
Set $L=\Ll\smallsetminus \{\Char(k)\}$ and let $A/K$ be an abelian variety. For every $\ell\in L$ denote by $\sigma_{\ell}\colon \Gal(K)\to \Aut_{\Zz_\ell}(T_\ell(A))$ the representation of $\Gal(K)$ on the $\ell$-adic Tate module $\invlim_{i\in\Nn} A[\ell^i]$. There exists a finite extension $k'$ of $k$ and a finite separable extension $K'/k'K$ such that $K'$ is the function field of some smooth  $k'$-variety $V'$. By the spreading-out principles of \cite{EGAIV3} there exists a non-empty open subscheme $U'$ of $V'$ and an abelian scheme $\CA$ over $U'$ with generic fibre $A$. This implies (cf.~\cite[IX.2.2.9]{SGA7}) that $\sigma_\ell$ is unramified along $U'$ for every $\ell\in L$. Hence the family $(\sigma_\ell)_{\ell\in L}$ satisfies condition $\CR(k)$.

In order to obtain also $\CS(k)$, we choose an odd prime $\ell_0\in L$, and we require the field $K'$ above to be finite separable over $k'K(A[\ell_0])$. Now let $v'$ be any discrete valuation of $K'$ which is trivial on $k'$, and let $R_{v'}$ be the discrete valuation ring of $v'$. Let $\mathcal{N}_{v'}/\Spec(R_{v'})$ be the N\'eron model of $A$ over $R_{v'}$. The condition $K'\supset K(A[\ell_0])$ forces $\mathcal{N}_{v'}$ to be semistable (cf.~\cite[IX.4.7]{SGA7}). This in turn implies that $\sigma_{\ell}|I(v')$ is unipotent (and hence $\sigma_{\ell}(I(v'))$ is pro-$\ell$) for {\em every} $\ell\in L$ (cf.~\cite[IX.3.5]{SGA7}). It follows that the family $(\sigma_\ell)_{\ell\in L}$ satisfies condition $\CS(k)$.}
\end{exam}

The following is the main independence criterion of this section:
\begin{prop}\label{geometricLemm}
Let $k$ be an algebraically closed field and let $K/k$ be a finitely generated extension. 
Suppose that $(\rho_\ell)_{\ell\in L}$ is a family of representations of $\Gal(K)$ that satisfies the following conditions:
\begin{enumerate}
\item The family $(\rho_\ell)_{\ell\in L}$ satisfies $\CR(k)$ if $\Char(k)=0$ and $\CS(k)$ if $\Char(k)>0$.
\item There exists a constant $c\in\Nn$ such that for all $\ell\in L$ one has $\rho_\ell(\Gal(K))\in\Sigma_\ell(c)$. 
\end{enumerate}
Then there exists a finite abelian Galois extension $E/K$ with the following properties.
\begin{enumerate}
\item[(i)] For every $\ell\in L$ the group $\rho_\ell(\Gal(E))$ lies in $\Sigma_\ell(c)$ and is generated by its $\ell$-Sylow subgroups; if $\ell>c$ then the group $\rho_\ell(\Gal(E))$ is generated by the $\ell$-Sylow subgroups of $\rho_\ell(\Gal(K))$. 
\item[(ii)] The group $\Gal(E)$ is a characteristic subgroup of~$\Gal(K)$.
\item[(iii)] The restricted family $(\rho_\ell\resg{\Gal(E)})_{\ell\in L\setminus\{2,3\}}$ is independent and $(\rho_\ell)_{\ell\in L}$ is almost independent.
\end{enumerate}
\end{prop}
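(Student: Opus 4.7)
The plan is to construct $E$ by uniformly trivialising the abelian prime-to-$\ell$ quotients of $\image(\rho_\ell)$ via Proposition~\ref{FiniteAndRamif-prop}(b), and then to deduce (iii) from a profinite Goursat argument based on disjointness of composition factors of $\rho_\ell(\Gal(E))$ across distinct primes $\ell \geq 5$.

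For the construction, put $M_\ell := \image(\rho_\ell)$ and let $M_\ell^+ \unlhd M_\ell$ be the normal closure of its pro-$\ell$ Sylow subgroups. From the series $M_\ell \triangleright I_\ell \triangleright P_\ell \triangleright \{1\}$ of Definition~\ref{sigmadef}, together with the fact that finite simple groups of Lie type in characteristic $\ell$ are generated by their $\ell$-Sylows, one reads off $M_\ell^+ \supseteq P_\ell$ and $M_\ell^+ I_\ell = M_\ell$, so that $A_\ell := M_\ell/M_\ell^+$ is abelian of order at most $c$ and coprime to $\ell$. The induced homomorphism $\phi_\ell \colon \Gal(K) \to A_\ell$ has abelian image of bounded order and factors through $\wt\rho_\ell$ (since the maximal normal pro-$\ell$ subgroup $Q_\ell$ of $M_\ell$ lies in $M_\ell^+$); hence $(\phi_\ell)_{\ell\in L}$ inherits condition $\CR(k)$ or $\CS(k)$ from $(\rho_\ell)$. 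Applying Proposition~\ref{FiniteAndRamif-prop}(b)---applicable because $k$ is algebraically closed and the $|A_\ell|$ are uniformly bounded---produces a finite extension $F/K$ with $\phi_\ell(\Gal(F)) = \{1\}$ for all $\ell$. I take $E$ to be the maximal abelian subextension of $F/K$, after first replacing $F$ by the (still finite) compositum of its $\Aut(\Gal(K))$-conjugates, so that $\Gal(E)$ is characteristic in $\Gal(K)$.

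Properties (i) and (ii) then follow quickly. Property (ii) holds by construction. For (i), since $\Gal(K)/\Gal(E)$ is abelian, $\rho_\ell(\Gal(E))$ is normal in $M_\ell$ with abelian cokernel, so the minimality of $M_\ell^+$ among such normal subgroups forces $\rho_\ell(\Gal(E)) = M_\ell^+$; this lies in $\Sigma_\ell(c)$ by Lemma~\ref{sigmalemm}(b) and is generated by its pro-$\ell$ Sylows by definition. For $\ell > c$, the degree $[E:K]$ divides an integer coprime to $\ell$, so every pro-$\ell$ Sylow of $M_\ell$ already lies in $\rho_\ell(\Gal(E))$, yielding the second clause of~(i). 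For (iii), observe that the composition factors of $M_\ell^+$ are either cyclic of order $\ell$ or non-abelian finite simple groups of Lie type in characteristic $\ell$: the first because $M_\ell^+$ is generated by $\ell$-Sylows, so any abelian quotient is pro-$\ell$; the second from Lemma~\ref{sigmalemm}(b). The classical theorem used in~\cite{bible} asserts that, for distinct $\ell, \ell' \geq 5$, no finite simple group is of Lie type in both characteristics, so these composition-factor sets are pairwise disjoint across $L\setminus\{2,3\}$. A standard profinite Goursat argument (reducing to finite sub-products and iterating) then forces the image of $\Gal(E)$ in $\prod_{\ell \in L\setminus\{2,3\}} M_\ell^+$ to be the full product, proving independence. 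Almost independence of the entire family $(\rho_\ell)_{\ell \in L}$ follows by a further restriction to an open subgroup of $\Gal(K)$, needed only to accommodate the at-most-two exceptional primes $\ell \in \{2,3\}\cap L$.

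The main obstacle is step~(iii): both the classical input on the uniqueness of the defining characteristic of finite simple groups of Lie type for $\ell \geq 5$, and the profinite Goursat argument over an infinite product of not-necessarily-finite groups, require some care. A secondary nuisance is arranging $\Gal(E)$ to be characteristic while lying in $\bigcap_\ell \kernel(\phi_\ell)$; this is handled by the $\Aut(\Gal(K))$-symmetrisation of the auxiliary extension $F$.
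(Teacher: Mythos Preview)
Your overall strategy matches the paper's exactly: kill the abelian prime-to-$\ell$ quotient of $\image(\rho_\ell)$ uniformly via Proposition~\ref{FiniteAndRamif-prop}(b), then invoke Serre's disjointness criterion. However, two steps do not go through as written.

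\textbf{The argument for (i) is incorrect.} You claim that ``the minimality of $M_\ell^+$ among such normal subgroups forces $\rho_\ell(\Gal(E)) = M_\ell^+$'', where ``such'' means normal with abelian cokernel. But $M_\ell^+$ is \emph{not} minimal with that property: the minimal normal subgroup with abelian cokernel is the closure of $[M_\ell,M_\ell]$, which can be strictly smaller (take $M_\ell=\BZ/\ell\times\BZ/q$ with $q\neq\ell$ and $q\le c$; then $[M_\ell,M_\ell]=1$ but $M_\ell^+=\BZ/\ell$). What is true is that $M_\ell^+$ is minimal among normal subgroups with \emph{prime-to-$\ell$} cokernel, and your argument then works for $\ell>c$ because $\Gal(E/K)$ has exponent dividing~$c!$. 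For $5\le\ell\le c$ you have only $\overline{[M_\ell,M_\ell]}\subseteq\rho_\ell(\Gal(E))\subseteq M_\ell^+$, and you have not shown that $\rho_\ell(\Gal(E))$ is generated by its $\ell$-Sylows. This is exactly what is needed to feed into Serre's criterion for those primes, so the gap propagates into the first clause of~(iii). The paper closes this gap by an ad hoc contradiction argument (its claims (1) and (2)): assuming a prime-to-$\ell$ simple quotient of $\rho_\ell(\Gal(E))$ exists, one builds a solvable Galois extension of $K$ strictly larger than $K_\ell^+$ but of degree prime to $\ell$, contradicting the defining maximality of $K_\ell^+$. That argument genuinely uses that $E$ is the \emph{exact} compositum $\prod_\ell K_\ell^+$, not merely some abelian extension trivialising all~$\phi_\ell$.

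\textbf{The symmetrisation for (ii) is unjustified.} You propose to replace $F$ by the compositum of its $\Aut(\Gal(K))$-conjugates and assert this is ``still finite''. In general it is not: $\Gal(K)$ is not topologically finitely generated (already $\Gal(\BC(t))$ is free profinite of infinite rank), so a finite-index subgroup can have infinitely many distinct images under $\Aut(\Gal(K))$, and their intersection need not be open. The paper sidesteps this by taking $E=\prod_\ell K_\ell^+$ directly and arguing that each $\Gal(K_\ell^+)$ is characteristic from its intrinsic description.
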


\begin{proof}
We can assume that $\rho_\ell$ is surjective for all $\ell\in L$. Denote by $G_\ell^+$ the normal subgroup of $G_\ell$ which is generated by the pro-$\ell$ Sylow subgroups of $G_\ell$. Then $\ol{G_\ell}:=G_\ell/G_\ell^+$ is a finite group of order prime to $\ell$. Denote by $\pi_{\ell}:G_{\ell}\to \ol{G_{\ell}}$ the natural projection. As $G_\ell$ lies in $\Sigma_\ell(c)$, so does its quotient $\ol G_\ell$ by Lemma~\ref{sigmalemm}(b). Now any group in $\Sigma_\ell(c)$ of order prime to $\ell$ is abelian of order at most~$c$, and thus the latter holds for~$\ol G_\ell$.
%
%
%

Let $K_\ell^+$ be the fixed field in $K_s$ of the kernel of the map $\pi_\ell\circ \rho_\ell$, so that $\Gal(K_\ell^+/K)\cong \ol{G_\ell}$. Then the compositum $E=\prod_{\ell\in L} K_\ell^+$ is an abelian extension of $K$ such that $\Gal(E/K)$ is annihilated by~$c!$. From Proposition~\ref{FiniteAndRamif-prop}(b), which uses hypothesis (a), we see that $E/K$ is finite. Assertion (ii) is now straightforward: By definition of the $K_\ell^+$ the subgroups $\Gal(K_\ell^+)$ of $\Gal(K)$ are characteristic and hence so is their intersection~$\Gal(E)$.

We turn to the proof of (i): For every $\ell\in L$, from (ii) the group $\rho_\ell(\Gal(E))$ is normal in $G_\ell$, and hence it lies in $\Sigma_\ell(c)$ by Lemma~\ref{sigmalemm}. By construction $\rho_\ell(\Gal(E))\subset \rho_\ell(\Gal(K_\ell^+))=G_\ell^+$ and $N_\ell:=G_\ell^+/\rho_\ell(\Gal(E))$ is abelian and annihilated by $c!$. We claim that (1) $N_\ell$ is an $\ell$-group, so that $N_\ell$ is trivial if $\ell>c$, and that (2) $\rho_\ell(\Gal(E))$ is generated by its pro-$\ell$ Sylow subgroups. We argue by contradiction and assume that (1) or (2) fails. 

If (2) fails, then $\rho_\ell(\Gal(E))$ has a finite simple quotient of order prime to $\ell$. Because $\rho_\ell(\Gal(E))$ lies in $\Sigma_\ell(c)$, this simple quotient has to be abelian of prime order $\ell'$ different from $\ell$. Again by (b), the Galois closure over $K$ of the fixed field of this $\ell'$-extension is a solvable extension. Denote by $F$ either this solvable extension if (2) fails, or the extension of $K_\ell^+$ whose Galois group is canonically isomorphic to $N_\ell$ if (1) fails. In either case $F/K$ is Galois and solvable, and we have a canonical surjection $\pi_\ell'\colon G_\ell\onto \Gal(F/K)$. Arguing as in the first paragraph, it follows that $I_\ell$ surjects onto $\Gal(F/K)$. By construction $\Gal(F/K_\ell^+)$ is not an $\ell$-group. It follows from the definition of $K_\ell^+$ that the normal subgroup $\pi_\ell'(P_\ell)\subset \Gal(F/K)$ is a proper subgroup of $\Gal(F/K_\ell^+)$. But then its fixed field is a proper extension of $K_\ell^+$ which is at once Galois and of a degree over $K$ that is prime to $\ell$. This contradicts the definition of $K_\ell^+$, and thus (1) and (2) hold. This in turn completes the proof of~(i).

We now prove (iii). Denote by $\Xi_\ell$ the class of those finite groups which are either a finite simple group of Lie type in characteristic $\ell$ or isomorphic to $\BZ/(\ell)$. The conditions in (i) imply that every simple quotient of $\rho_\ell(\Gal(E))$ lies $\Xi_\ell$. But now for any $\ell,\ell'\ge5$ such that $\ell\neq\ell'$ one has $\Xi_{\ell}\cap \Xi_{\ell'}=\emptyset$ (cf.~\cite[Thm.~5]{bible}, \cite{artin}, \cite{KLMS}). The first part of (iii) now follows from \cite[Lemme 2]{bible}. The second part follows from the first, the definition of almost independence and from \cite[Lemme 3]{bible}.
\end{proof}



\section{Reduction steps}
\label{Reductions}

Let $k$, $K$, $L$, $p$ and $(\rho_\ell)_{\ell\in L}$ be as at the beginning of Section~\ref{indep-sec}. In the previous two sections we have described ramification properties of $(\rho_\ell)_{\ell\in L}$ and properties of $(\rho_\ell(\Gal(K)))_{\ell\in L}$ that were essential to control in a uniform way the groups $H_\ell$ and $M_\ell$ that occur in $\rho_\ell(\Gal(K))$ as in Theorem~\ref{lp-main}. The aim of this section is to explain how these properties for a general pair $(K,k)$ in our target Theorem \ref{main}, can be reduced to a pair where $k$ is the prime field and $K$ is finitely generated over it. Moreover we shall explain how one can reduce the proof of our target theorem to the case where $X$ is a smooth and projective variety over~$K$.

\medskip

\begin{Lem}\label{FamilyRestriction}
Suppose we have a commutative diagram of fields
$$\xymatrix{K \ar@{^{ (}->} [r]&K'\\
k \ar@{^{ (}->} [r]\ar@{_{ (}->} [u]&k'\ar@{_{ (}->} [u]\\
}$$
such that $K'$ is finite over $Kk'$. Then the following properties hold true:
\begin{enumerate}\advance\itemsep by -.5em
\item[(i)] If $(\rho_\ell)_{\ell\in L}$ satisfies $\CR(k)$, then $(\rho_\ell \resg{\Gal({K'})})_{\ell\in L}$ satisfies $\CR(k')$.
\item[(ii)] If $\Char(k)>0$ and $(\rho_\ell)_{\ell\in L}$ satisfies $\CS(k)$, then $(\rho_\ell\resg{\Gal({K'})})_{\ell\in L}$ 
satisfies $\CS(k')$.
\item[(iii)] If there exists a constant $c\in\BN$ such that for all $\ell\in L$, the group $\rho_\ell(\Gal({K\wt k}))$ lies in $\Sigma_\ell(c)$, then there exists a finite Galois extension $E'/K'$ such that for all $\ell\in L$, the group $\rho_\ell(\Gal(E'\wt k'))$ lies in $\Sigma_\ell(c)$.
\end{enumerate}
\end{Lem}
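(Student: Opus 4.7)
Parts (i) and (ii) proceed by a compatible base change construction. Given data $(k_1,K_1,U_1)$ witnessing $\CR(k)$, the plan is to take $k_1':=k_1k'$ and $K_1':=K_1K'$ inside a common algebraic closure; these are finite over $k'$ and $K'k_1'$ respectively. I would then obtain a smooth $k_1'$-model $U_1'$ of $K_1'$ as the smooth locus of a normal model, after enlarging $k_1$ if necessary to guarantee separability of $K_1'/k_1'$ in positive characteristic. The inclusion $K_1\subset K_1'$ induces, upon shrinking $U_1'$, a morphism $f\colon U_1'\to U_1$; because the pullback of an \'etale cover along $f$ remains \'etale, each $\wt\rho_\ell|_{\Gal(K_1')}$ factors through $\pi_1(U_1')$, proving (i). For (ii), one uses the same data and invokes Lemma~\ref{CodimOneLTameIsInherited} to propagate divisor pro-$\ell$ tameness from $\wt\rho_\ell|_{\Gal(K_1)}$ to $\wt\rho_\ell|_{\Gal(K_1')}$.

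For (iii), the plan is to find a finite Galois extension $E'/K'$ such that $\Gal(E'\wt{k'})$, viewed inside $\Gal(K\wt k)$ via $\res_{K'/K}$, is a closed normal subgroup. Once this is arranged, $\rho_\ell(\Gal(E'\wt{k'}))$ is a closed normal subgroup of $\rho_\ell(\Gal(K\wt k))\in\Sigma_\ell(c)$, and Lemma~\ref{sigmalemm}(b) supplies $\rho_\ell(\Gal(E'\wt{k'}))\in\Sigma_\ell(c)$ uniformly in $\ell$. I would take $E'$ to be the Galois closure of $K'$ over $Kk'$ inside $\wt{K'}$: since $K'/Kk'$ is finite, $E'/Kk'$ is finite Galois, and hence so is $E'/K'$. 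The key claim is that $E'\wt{k'}/K\wt k$ is Galois. My plan to verify it is to decompose the tower as $K\wt k \subset K\wt{k'}\subset E'\wt{k'}$, observe that $K\wt{k'}/K\wt k$ is Galois (being a base change of $\wt{k'}/\wt k$, which is normal and separable since $\wt k$ is perfect), observe that $E'\wt{k'}/K\wt{k'}$ is Galois (base change of $E'/Kk'$), and then use the normal-hull property of $E'$ to lift automorphisms of $\wt{k'}/\wt k$ to automorphisms of $\wt{K'}$ preserving $E'\wt{k'}$.

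The main obstacle is (iii) in the transcendental case, where $k'/k$ has transcendental elements and $\wt{k'}/\wt k$ is a proper transcendental extension of algebraically closed fields. The restriction $\res_{K'/K}$ is then not injective, and one must identify the image of $\Gal(E'\wt{k'})$ with $\Gal(\wt K \cap E'\wt{k'})\subset \Gal(K\wt k)$ carefully, verifying normality of $\wt K\cap E'\wt{k'}$ over $K\wt k$ via the interaction of $\wt K$ and $\wt{k'}$ inside $\wt{K'}$. The separability subtleties in (i) and (ii) for $\Char(k)>0$ are comparatively minor, resolved by enlarging the auxiliary field $k_1$ so that $K_1'/k_1'$ admits a separating transcendence basis.
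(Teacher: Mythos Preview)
Your approach to parts (i) and (ii) is genuinely different from the paper's and works with minor repairs. The paper decomposes the passage $(k,K)\to(k',K')$ into three elementary moves --- (a) $K'=K$ with $k\subset k'$, (b) pure base change $K'=Kk'$ with $K\cap k'=k$, (c) $k'=k$ with $K'/K$ finite --- and in case~(a) carries out a rather elaborate spreading-out argument over a model of $k'$ combined with de~Jong alterations. Your direct route (set $k_1'=k_1k'$, $K_1'=K_1K'$, produce a smooth $k_1'$-model of~$K_1'$, shrink to get a morphism to~$U_1$) is cleaner. The one imprecision: in positive characteristic ``the smooth locus of a normal model'' may be empty if $K_1'/k_1'$ fails to be separably generated, and ``enlarging $k_1$'' does not obviously cure this. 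What you should do instead is apply de~Jong's alteration theorem directly to $K_1'/k_1'$, obtaining a finite extension $K_1''/K_1'$ and a smooth $k_1''$-variety with function field $K_1''$; this is permitted by the definition of~$\CR(k')$.

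For part (iii) there is a real gap. Your plan hinges on the claim that the image of $\Gal(E'\wt{k'})$ under $\res_{K'/K}$ is \emph{normal} in $\Gal(K\wt k)$; equivalently that $F:=\wt K\cap E'\wt{k'}$ is normal over $K\wt k$. In the transcendental case your sketch (``lift automorphisms of $\wt{k'}/\wt k$ to automorphisms of $\wt{K'}$ preserving $E'\wt{k'}$'') does not engage the right objects: the elements $\sigma\in\Gal(K\wt k)$ you must control are automorphisms of $\wt K$, not of $\wt{k'}$, and an extension $\tilde\sigma$ of $\sigma$ to $\wt{K'}$ need not fix $k'$ or stabilize $E'\wt{k'}$ (indeed $\sigma$ need not even fix $\wt K\cap k'$). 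The paper sidesteps this entirely. Its three-case decomposition shows: in case~(a), $\Gal(K\wt{k'})\trianglelefteq\Gal(K\wt k)$; in case~(b), $\res$ is \emph{surjective} onto $\Gal(K\wt{k_a})$ where $k_a=K\cap k'$; in case~(c), $\Gal(E'\wt{k'})\trianglelefteq\Gal(Kk'\wt{k'})$. Each step separately yields a normal subgroup, so Lemma~\ref{sigmalemm}(b) may be applied step by step. This is a subnormal-series argument rather than a single normality claim: one first deduces $\rho_\ell(\Gal(K\wt{k_a}))\in\Sigma_\ell(c)$, then $\rho_\ell(\Gal(E'\wt{k'}))\in\Sigma_\ell(c)$. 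Normality is not transitive, so your one-shot claim is strictly stronger than what is needed and is left unproved.
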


\begin{proof}
Considering the diagram
$$\xymatrix{K \ar@{=}[r]&\ar@{^{ (}->} [r]K&k'K \ar@{^{ (}->} [r]&K'\\
k \ar@{^{ (}->} [r]\ar@{_{ (}->} [u]&k'\cap K\ar@{_{ (}->} [u]\ar@{^{ (}->} [r]&k'\ar@{=}[r]\ar@{_{ (}->} [u]&k'\rlap{,}\ar@{_{ (}->} [u]\\
}$$
it suffices to prove the lemma in the following three particular cases: (a) $K'=K$, (b) $Kk'=K'$ and $K\cap k'=k$ (base change), (c) $k=k'$ and $K'$ is finite over $K$. 

For the proof of (iii) note that in case (a) the group $\Gal({K'\wt k'})$ is a closed normal subgroup of  $\Gal({K\wt k})$ and in case (b) the canonical homomorphism $\Gal({K'\wt k'})\to\Gal({K\wt k})$ is surjective. In either case we take $E=K'$. Finally in case (c) we define $E$ as the Galois closure of $K'$ over $K$. Then in cases (a) and (c) assertion (iii) follows from Corollary~\ref{sigmal-cor} with $H=\Gal(E)$ and~$G=\Gal(K)$. Case (b) is obvious.

In all cases, the proof of (ii) is immediate from Lemma~\ref{CodimOneLTameIsInherited}, once (i) is proved. Therefore it remains to prove (i). We first consider case (a). By replacing $k$, $k'$ and $K$ several times by finite extensions, we can successively achieve the following, where in each step the previous property is preserved: First, using de Jong's result on alterations (cf.~\cite[Thm.~4.1, 4.2]{dejong}), there exists a smooth projective scheme $X/k'$ whose function field is $K$. Second, by the spreading out principle, there exists an affine scheme $U'$ over $k$ whose function field is $k'$ and a smooth projective $U'$-scheme $\CX$ whose function field is $K$. Third, by hypothesis $\CR(k)$ there exists a smooth $k$-scheme $U$ whose function field is $K$ such that all $\wt\rho_\ell$ factor via $\pi_1(U)$. By shrinking $U$ we may assume it to be affine. Also we choose an affine open subscheme $\CV$ of $\CX$. The corresponding coordinate rings we denote by $R$ and $\CR$, respectively. Both of these rings are finitely generated over $k$. Since the fraction field of both is $K$, by inverting a suitable element $g\neq0$ of $R$ we have $R[g^{-1}]\supset \CR$, and similarly we can find $0\neq f\in\CR.
 $ such that $\CR[f^{-1}]\supset R$. Inverting both elements shows that we can find an affine open subscheme $V$ of both $U$ and $\CV$. In particular, the function field of $V$ is $K$, the scheme $V$ is smooth over $k$ and over $U'$ and the representations $\rho_\ell$ all factor via $\pi_1(V)$. The following diagram displays  the situation:
$$\xymatrix{
&\ar@{_{ (}->} [dl]V\ar@{^{ (}->} [dr]\ar[ddr]&V'\ar@{-->}[ddr]\ar@{-->}[l]&&\\
U\ar[ddr]&
&\ar[d]\CX&X\ar[l]\ar[d]&\ar[l]\Spec K\ar[dl]\\
&&U'\ar[dl]&\ar[l]\Spec k'\ar[dll]\\
&\Spec k&\\
}$$
Define $V'$ as the base change $V\times_{U'} \Spec k'$, so that $V'\to \Spec k'$ is smooth and affine. Now if $W\to U$ is any \'etale Galois cover, then the base change $W\times_{U}V'$ is an \'etale Galois cover of $V'$. We deduce that $\wt\rho_\ell$ factors via $\pi_1(V')$ for all~$\ell$, and thus we have verified $\CR(k')$ for~$(\wt\rho_\ell)_{\ell\in L}$.

Case (b): This is a base change. Therefore if we replace $k$ by a finite extension, then $U/k$ becomes a smooth variety with function field a finite separable extension of $Kk'$. But then $U\times_{\Spec k}\Spec k'$ is smooth over $k'$ and its function field is a finite separable extension of $K'=Kk'$. From this (i) is immediate.

Case (c): To see $\CR(k)$ over $K'$, let $k'\supset k$ and $K''\supset K'k'$ be finite extensions such that there exists a smooth $k'$-scheme $U$ with function field $K''$ such that all $\wt\rho_\ell$ factor via $\pi_1(U)$. Let $U'$ be the normalization of $U$ in $K'K''$. Now choose $k''\supset k'$ and $K'''\supset K'K''$ finite such that there is a smooth $k''$-scheme $U''$ with function field $K''$ and a finite morphism to $U'$. Then $\CR(k')$ is verified by~$U''$. 
\end{proof}

The following is a standard lemma from algebraic geometry about models of schemes over finitely generated fields.
\begin{Lem}\label{descendfg}
Let $k$ be a field, $K/k$ be finitely generated and $X$ be an separated algebraic scheme over $K$. Then there exists an absolutely finitely generated field $K_0\subset K$ and a separated algebraic scheme $X_0$ over $K_0$ such that $kK_0=K$ and $X_0\otimes_{K_0}K=X$. If in addition $X/K$ is smooth and/or projective, 
then one can choose $X_0$ and $K_0$ in a way so that $X_0/K_0$ is smooth and/or projective.
\end{Lem}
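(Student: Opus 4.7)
The plan is to first exhibit $K_0$ as a subfield of $K$ explicitly, and then invoke standard spreading-out theorems to descend~$X$.

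For the construction of $K_0$, I would choose a transcendence basis $t_1,\ldots,t_d$ of $K/k$ and, since $K$ is finitely generated over $k$, elements $\alpha_1,\ldots,\alpha_r\in K$ so that $K=k(t_1,\ldots,t_d)[\alpha_1,\ldots,\alpha_r]$ with each $\alpha_j$ algebraic over $k(t_1,\ldots,t_d)$. Each $\alpha_j$ satisfies a minimal polynomial relation whose coefficients involve only finitely many elements of $k$. Let $k_0$ denote the prime field of $k$ and let $k_1\subset k$ be the subfield generated over $k_0$ by all these finitely many coefficients; then $k_1$ is absolutely finitely generated. Setting $K_0:=k_1(t_1,\ldots,t_d)[\alpha_1,\ldots,\alpha_r]\subset K$, I obtain an absolutely finitely generated subfield of $K$ with $kK_0=K$, because $K_0$ already contains all the generators $t_i,\alpha_j$ of $K$ over $k$.

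For the descent of $X$, the key observation is that $K$ is the filtered direct limit of the absolutely finitely generated subfields $K'$ of $K$ that contain $K_0$: any element of $K$ adjoined to $K_0$ produces such a field. By the standard limit results for schemes of finite presentation (see \cite[IV$_3$, \S 8]{EGAIV3}, in particular Théorème~8.8.2 and the fact that separatedness is preserved by descent in the limit, cf.~Théorème~8.10.5), every separated algebraic scheme over $K=\varinjlim K'$ is the base change of a separated algebraic scheme $X_0$ defined over some $K'$ in the system. Replacing $K_0$ by this $K'$ (which is still absolutely finitely generated and still satisfies $kK_0=K$ because enlarging $K_0$ preserves that equality), I obtain the desired model $X_0/K_0$ with $X_0\otimes_{K_0}K\cong X$.

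Finally, if $X/K$ is smooth and projective, then by the same limit results both properties descend to some finite stage in the direct system: smoothness descends by \cite[IV$_4$, Théorème~17.7.8]{EGAIV3}, and projectivity (once a projective embedding over $K$ is fixed, which involves only finitely many equations) descends similarly. A single further enlargement of $K_0$, adjoining those finitely many coefficients to $K_0$, therefore yields a model $X_0/K_0$ which is itself smooth and projective, while $K_0$ remains absolutely finitely generated and $kK_0=K$. The main (only) technical obstacle is the appeal to the EGA limit machinery, but this is a routine application once the field $K_0$ has been set up as above.
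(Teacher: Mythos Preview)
Your argument is correct and follows essentially the same route as the paper's proof: both invoke the EGA limit machinery (\cite[8.8.2, 8.10.5]{EGAIV3}) to descend $X$ to an absolutely finitely generated subfield, and both enlarge that subfield by finitely many elements to ensure $kK_0=K$. The only cosmetic difference is the order of operations---the paper first descends $X$ to some finitely generated $K'\subset K$ and then adjoins generators $x_1,\ldots,x_t$ of $K/k$ to obtain $K_0:=K'(x_1,\ldots,x_t)$, whereas you first build a $K_0$ with $kK_0=K$ and then enlarge it inside the filtered system to descend~$X$; also, the paper obtains smoothness of the model directly by faithfully flat descent along the field extension $K/K'$ rather than citing \cite[17.7.8]{EGAIV3}.
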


\begin{proof} 
Let $\mathfrak{K}$ be the set of all finitely generated subfields of $K$. Then $K=\bigcup_{K'\in \mathfrak{K}} K'$ and $\Spec(K)=\plim_{K'\in \mathfrak{K}} \Spec(K')$. There exists $K'\in \mathfrak{K}$ and a separated algebraic $K'$-scheme $X'$ such that $X=X'_{K'}$ (cf.~\cite[8.8.2]{EGAIV3} and \cite[8.10.5(v)]{EGAIV3}). If $X/K$ is projective, then one can choose $K'$ and $X'$ in such a way that $X'/K'$ is projective (cf.~\cite[8.10.5(xiii)]{EGAIV3}). If $X/K$ is smooth, then $X'/K'$ is smooth. Furthermore there exist $x_1,\cdots, x_t\in K$ such that $K=k(x_1,\cdots x_t)$. Define $K_0:=K'(x_1,\cdots, x_t)$ and
$X_0:=X'_{K_0}$. Then $kK_0=K$, the field $K_0$ is finitely generated and $X_0$ has the desired properties.
\end{proof}

For a separated algebraic scheme $X$ over $K$ and any $\ell\in\Ll\smallsetminus \{\Char(k)\}$ we denote by $\rho_{\ell,X}$ the representation of $\Gal(K)$ on $\bigoplus_{q\ge0}\big(H^q_c(X_{\wt K},\BQ_\ell)\oplus H^q(X_{\wt K},\BQ_\ell)\big)$.
\begin{Cor}\label{FirstReduction} 
Let $p$ be a prime number or $p=0$. Let $L=\Ll\setminus \{p\}$.
Suppose that for all absolutely finitely generated fields $K_0$ with prime field $k_0$ of characteristic $p$
and for all schemes $X_0$ that are separated algebraic over $K_0$, the following conditions are true:
\begin{enumerate}
\item The family $(\rho_{\ell,X_0})_{\ell\in L}$ satisfies $\CR(k_0)$ if $p=0$ and $\CS(k_0)$ if $p>0$.
\item There exists a constant $c\in\Nn$ and a finite extension $E_0$ of $K_0$ such that for all $\ell\in L$ one has $\rho_{\ell,X_0}(\Gal({E_0\wt k_0}))\in\Sigma_\ell(c)$.
\end{enumerate}
Let $k,K,X$ be as in Theorem~\ref{main} and $\rho_\ell=\rho_{\ell, X}$ for $\ell\in L$. 
Then there exists 
a finite Galois extension $K'/K$ and a finite Galois extension $E/K$ with $K'\subset E$ 
such that the assertions (a) and (b) and conclusions (i)-(iii) 
of Proposition~\ref{geometricLemm} about $(\rho_\ell)_{\ell\in L}$ hold true if one replaces $K$ by $K'$ in them. In particular Theorem~\ref{main} holds.
\end{Cor}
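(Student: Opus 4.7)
The plan is to combine the descent from Lemma~\ref{descendfg}, the base-change compatibility of \'etale cohomology, the restriction Lemma~\ref{FamilyRestriction}, and Proposition~\ref{geometricLemm}. First, I would apply Lemma~\ref{descendfg} to obtain an absolutely finitely generated subfield $K_0 \subset K$, a separated algebraic $K_0$-scheme $X_0$ with $X_0 \otimes_{K_0} K = X$ and $k K_0 = K$. Let $k_0$ be the algebraic closure of the prime field in $K_0$; since $k$ is algebraically closed, $k_0 \subset k$. Proper base change, combined with the invariance of $\ell$-adic \'etale cohomology under extension of algebraically closed base fields, identifies each representation $\rho_{\ell, X}$ on $\Gal(K)$ with the restriction of $\rho_{\ell, X_0}$ along $\mathrm{res}_{K/K_0}\colon\Gal(K) \to \Gal(K_0)$, and analogously for compactly supported cohomology.

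Hypothesis (1) now gives that $(\rho_{\ell, X_0})_{\ell \in L}$ satisfies $\CR(k_0)$ (or $\CS(k_0)$ if $\Char(k_0) > 0$), while hypothesis (2) produces a finite Galois extension $E_0/K_0$ and a constant $c$ such that $\rho_{\ell, X_0}(\Gal(E_0 \wt{k_0})) \in \Sigma_\ell(c)$ for every $\ell \in L$. Replacing the triple $(K_0, X_0, K)$ by $(E_0,\, X_0 \otimes_{K_0} E_0,\, K E_0)$ is legitimate: Theorem~\ref{main} for $K E_0$ implies Theorem~\ref{main} for $K$, the new $K_0$ remains absolutely finitely generated (with field of constants still inside $k$), and one checks directly that $(KE_0)\cdot E_0 = KE_0$, so the equality $k \cdot K_0 = K$ is preserved. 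After this replacement we may assume $\rho_{\ell, X_0}(\Gal(K_0 \wt{k_0})) \in \Sigma_\ell(c)$ for every $\ell$. The hypotheses of Lemma~\ref{FamilyRestriction} are now met with old pair $(K_0,k_0)$ and new pair $(K,k)$, where $K$ is (trivially) finite over $K_0 k$: parts (i) and (ii) yield that $(\rho_{\ell, X_0}|_{\Gal(K)})_{\ell \in L}$ satisfies $\CR(k)$ or $\CS(k)$, and part (iii) produces a finite Galois extension $E/K$ with $\rho_{\ell, X_0}(\Gal(E \wt k)) \in \Sigma_\ell(c)$ for all $\ell \in L$. Since $k$ is algebraically closed, $E \wt k = E$.

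Applying Lemma~\ref{FamilyRestriction} once more to the finite extension $K \subset E$, the restricted family $(\rho_{\ell, X_0}|_{\Gal(E)})_{\ell \in L}$ still satisfies $\CR(k)$ or $\CS(k)$ and has all images in $\Sigma_\ell(c)$. It therefore meets both hypotheses of Proposition~\ref{geometricLemm}, which provides a finite abelian Galois extension $E'/E$ realising conclusions (i)--(iii) of that proposition; in particular the family is almost independent. By the cohomological identification of the first paragraph, the same almost independence holds for $(\rho_{\ell, X}|_{\Gal(E)})_{\ell \in L}$, and because $\Gal(E)$ is open in $\Gal(K)$, it also holds for the original family $(\rho_{\ell, X}|_{\Gal(K)})_{\ell \in L}$. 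This establishes Theorem~\ref{main}. The main obstacle I expect is the clean bookkeeping of the replacement step: one must feed the existential statement in hypothesis (2) into the cleaner input required by Lemma~\ref{FamilyRestriction}(iii), without disturbing hypothesis (1) or the relation $k K_0 = K$, and track consistently how the cohomology of $X_0$ over $E_0$, and of $X$ over $K E_0$, match on restricting to $\Gal(E)$.
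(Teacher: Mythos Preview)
Your proposal is correct and follows essentially the same approach as the paper: descend via Lemma~\ref{descendfg}, replace $K_0$ by $E_0$ (and $K$ by $KE_0$), apply Lemma~\ref{FamilyRestriction} to transport both the $\CR/\CS$ condition and the $\Sigma_\ell(c)$ condition up to the level of $K$ (or a finite extension thereof), and then invoke Proposition~\ref{geometricLemm}. Your version is in fact more explicit than the paper's short proof --- you spell out the base-change identification $\rho_{\ell,X}=\rho_{\ell,X_0}\circ\res_{K/K_0}$ and the verification that $k\cdot E_0=KE_0$, both of which the paper leaves implicit; the only point you could state more clearly is that after the replacement hypothesis~(a) still applies to $(E_0,X_0\otimes_{K_0}E_0)$ because $E_0$ is again absolutely finitely generated (or, equivalently, by Lemma~\ref{FamilyRestriction}(i)/(ii) applied to $K_0\subset E_0$).
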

\begin{proof}
Let $k$ be an algebraically closed field, let $K$ over $k$ be a finitely generated extension field and let $X$ be a separated algebraic scheme over $K$. By Lemma~\ref{descendfg}, we can find $K_0\subset K$ absolutely finitely generated and $X_0$ a separated algebraic scheme over $K_0$ such that $X=X_0\otimes_{K_0}K$, and moreover if $X$ is smooth and/or projective over $K$, then the same can be assumed for $X_0$ over~$K_0$. Next let $c$ be the constant and $E_0$ the field 
as guaranteed by our hypotheses. Now Lemma~\ref{FamilyRestriction} yields a finite Galois extension $K'/E_0K$ such that $(\rho_{\ell,X}|_{\Gal(K')})_{\ell\in L}$ satisfies $\CR(k)$ if $p=0$ and $\CS(k)$ if $p>0$ and in addition that for all $\ell\in L$, the image $\rho_\ell(\Gal(K'))$ lies in $\Sigma_\ell(c)$. We can assume $K'/K$ Galois after replacing $K'$ by a finite extension.
By Proposition~\ref{geometricLemm}, there exists a finite abelian Galois extension $E$ of $K'$ such that the assertions and conclusions of Proposition~\ref{geometricLemm} hold true if one replaces $K$ by $K'$ in them. Furthermore $E/K$ is Galois because
$\Gal(E)$ is a characteristic subgroup of the normal subgroup $\Gal(K')$ of $\Gal(K)$.
\end{proof}

We now come to the second reduction step.

\begin{Def} \label{sssdefi} 
{\sl For a representation $\rho_\ell\colon\Gal({K})\to\GL_n(\BQ_\ell)$ we denote by $\rho_\ell^\sss$ its {\em strict semisimplification}, i.e., the direct sum over the irreducible subquotients of $\rho_\ell$ where each isomorphism type occurs with multiplicity one.}
\end{Def}

Note that $\image(\rho_\ell^\sss)=\image(\rho_\ell^\ssi)$ where $\rho_\ell^\ssi$ denotes the usual semisimplification of $\rho_\ell$. 

\begin{Lem}\label{SemisimplifAndS(k)}
For every $\ell\in L$ let $\rho_\ell$ and 
$\rho_\ell'$ be representations $\Gal(K)\to \GL_n(\BQ_\ell)$. Suppose that one of the following two assertions is true:
\begin{enumerate}\advance\itemsep by -.5em
\item $\rho_\ell^\sss=(\rho_\ell')^\sss$ for all $\ell\in L$, or
\item $\rho_\ell$ is a direct summand of $\rho_\ell'$ for all $\ell\in L$.
\end{enumerate}
Then the following hold:
\begin{enumerate}\advance\itemsep by -.5em
\item[(i)] if the family $(\rho'_\ell)_{\ell\in L}$ satisfies $\CR(k)$ then so does $(\rho_\ell)_{\ell\in L}$;
\item[(ii)] if the family $(\rho'_\ell)_{\ell\in L}$ satisfies $\CS(k)$ then so does $(\rho_\ell)_{\ell\in L}$;
\item[(iii)] for any $\ell\in L$, if $\rho_\ell'(\Gal({K\wt k}))$ lies in $\Sigma_\ell(c)$ then so does $\rho_\ell(\Gal({K\wt k}))$.
\end{enumerate}
\end{Lem}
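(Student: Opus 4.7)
My plan is to treat cases (a) and (b) separately, reducing in each case the three assertions to a canonical relation between the images of the involved representations.

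Case (b) will be simpler: the decomposition $\rho_\ell'=\rho_\ell\oplus\sigma_\ell$ gives a continuous surjection $\alpha_\ell\colon\image(\rho_\ell')\twoheadrightarrow\image(\rho_\ell)$ intertwining the $\Gal(K)$-actions. I will observe that $\alpha_\ell$ sends the maximal normal pro-$\ell$ subgroup $Q_\ell'\subseteq\image(\rho_\ell')$ to a normal pro-$\ell$ subgroup of $\image(\rho_\ell)$, hence into $Q_\ell$, yielding an induced surjection $\image(\wt\rho_\ell')\twoheadrightarrow\image(\wt\rho_\ell)$. The inclusion $\ker(\wt\rho_\ell)\supseteq\ker(\wt\rho_\ell')$ then makes (i) and (ii) automatic, since the fixed field of $\wt\rho_\ell$ sits inside that of $\wt\rho_\ell'$ and unramifiedness, respectively $\ell$-tameness, along a chosen variety $U'$ is inherited. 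For (iii) I will restrict $\alpha_\ell$ to $\rho_\ell'(\Gal(K\wt k))$ and invoke Lemma~\ref{sigmalemm}(b).

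The core of the argument is case (a), and the main step I aim to establish is that for any continuous $\rho\colon\Gal(K)\to\GL_n(\BQ_\ell)$ the quotient maps $\wt\rho$ and $\wt{\rho^\sss}$ coincide as quotients of $\Gal(K)$; once granted, the hypothesis $\rho_\ell^\sss=(\rho_\ell')^\sss$ gives $\wt\rho_\ell=\wt\rho_\ell'$, and (i), (ii) follow at once. To establish this key claim I will pick a composition series $0=V_0\subsetneq V_1\subsetneq\cdots\subsetneq V_r=V$ of the $\BQ_\ell[\Gal(K)]$-module $V$ realising $\rho$ together with a flag-adapted basis. Then $\image(\rho)$ lies in the stabiliser parabolic $P\subseteq\GL_n(\BQ_\ell)$ with unipotent radical $U$ and Levi $P/U\cong\prod_i\GL(V_i/V_{i-1})$; the induced map from $\image(\rho)$ to the Levi has image exactly $\image(\rho^\ssi)=\image(\rho^\sss)$. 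This produces a continuous surjection $\pi\colon\image(\rho)\twoheadrightarrow\image(\rho^\sss)$ with kernel $\image(\rho)\cap U(\BQ_\ell)$. Since $U(\BQ_\ell)$ is a unipotent $\BQ_\ell$-group, its compact subgroups are pro-$\ell$ (choosing a $\Gal(K)$-stable lattice in $V$ adapted to the flag one sees the kernel lies in $U(\BZ_\ell)$), so $\ker\pi$ is normal pro-$\ell$ in $\image(\rho)$. A short double-inclusion argument then gives $Q_\ell=\pi^{-1}(Q_\ell^\sss)$ and hence the sought canonical isomorphism $\image(\wt\rho)\cong\image(\wt{\rho^\sss})$ over $\Gal(K)$.

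For (iii) in case (a) I will need the additional permanence property that $\Sigma_\ell(c)$ is closed under extensions by pro-$\ell$ normal subgroups: given $1\to P\to G\to G/P\to 1$ with $P$ pro-$\ell$ and normal in $G$ and $G/P\in\Sigma_\ell(c)$, the three-term normal series for $G/P$ of Definition~\ref{sigmadef} lifts to a corresponding series for $G$ whose bottom term, being a pro-$\ell$ extension of pro-$\ell$, is still pro-$\ell$. Combining this with Lemma~\ref{sigmalemm}(b) and the surjection $\pi$ with pro-$\ell$ kernel applied to $\rho_\ell$ and $\rho_\ell'$ individually and restricted to $\Gal(K\wt k)$, membership of $\rho_\ell'(\Gal(K\wt k))$ in $\Sigma_\ell(c)$ will transfer first to $(\rho_\ell')^\sss(\Gal(K\wt k))=\rho_\ell^\sss(\Gal(K\wt k))$ and then back to $\rho_\ell(\Gal(K\wt k))$. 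The main technical point I expect will require care is the verification that $\image(\rho)\cap U(\BQ_\ell)$ is indeed pro-$\ell$, since the parabolic picture must be pinned down using a $\Gal(K)$-stable lattice compatible with the chosen flag.
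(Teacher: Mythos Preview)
Your proposal is correct and follows essentially the same approach as the paper's (very terse) proof: the key fact in case (a) is precisely that the kernel of $\image(\rho_\ell)\to\image(\rho_\ell^\ssi)=\image(\rho_\ell^\sss)$ is pro-$\ell$, and in case (b) that $\rho_\ell(\Gal(K\wt k))$ is a quotient of $\rho_\ell'(\Gal(K\wt k))$ by a closed normal subgroup so that Lemma~\ref{sigmalemm}(b) applies. You have simply spelled out the details---the parabolic/Levi picture, the identification $Q_\ell=\pi^{-1}(Q_\ell^{\sss})$, and the closure of $\Sigma_\ell(c)$ under pro-$\ell$ extensions---that the paper leaves implicit.
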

Note that condition (a) is symmetric, so that under (a) each of (i)--(iii) is an equivalence. 

\begin{proof}
The proof of (i)--(iii) under hypothesis (a) is an immediate consequence of the simple fact that the kernel of $\image(\rho_\ell)\to\image(\rho_\ell^\ssi)=\image(\rho_\ell^\sss)$ is a pro-$\ell$-group. Assertions (i) and (ii) under hypothesis (b) are trivial. For (iii) note that hypothesis (b) implies that $\rho_\ell'(\Gal({K\wt k}))$ is a closed normal subgroup of $\rho_\ell(\Gal({K\wt k}))$, and so we can apply Lemma~\ref{sigmalemm}.
\end{proof}

The following important result is taken from the Seminaire Bourbaki talk of Berthelot on de Jong's alteration technique (cf.~\cite[Thm.~6.3.2]{Berthelot})
\begin{Thm}\label{Berthelot}
Let $k$ be a field and $K$ be a finitely generated extension field. Let $X$ be a separated algebraic scheme over $K$. Then there exists a finite extension $k'/k$, a finite 
separable extension $K'/Kk'$ and a finite set of smooth projective varieties $\{Y_i\}_{i=1,\ldots,k}$ over $K'$ such that for all $\ell\in \Ll\smallsetminus \{\Char(k)\}$ the representation  
$(\rho_{\ell,X}\resg{\Gal({K'})})^\sss$ is a direct summand of $\big(\bigoplus_i\rho_{\ell,Y_i}\big)^\sss$.
\end{Thm}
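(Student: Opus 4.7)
The plan is to combine de Jong's theorem on alterations~\cite{dejong} with the trace--pullback identity in $\BQ_\ell$-étale cohomology and a description of the cohomology of a smooth quasi-projective variety via a smooth projective compactification with simple normal crossings boundary. No induction on $\dim X$ is necessary: a single application of de~Jong to a well-chosen pair $(\overline X,\overline X\setminus X)$ suffices.

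First, I would reduce to the case where $X$ is reduced (cohomology being invariant under nilpotent thickenings) and, by treating irreducible components separately, to the case where $X$ is integral. Choose a Nagata compactification $X\hookrightarrow \overline X$ as a dense open subscheme. Applying de~Jong to the pair $(\overline X,\overline X\setminus X)$ yields a finite extension $k'/k$, a finite separable extension $K'/Kk'$, a smooth projective $K'$-variety $\overline{X'}$, and a proper surjective generically étale morphism $f\colon \overline{X'}\to \overline X_{K'}$ of some degree $d_0$, such that $D:=f^{-1}(\overline X\setminus X)_{\mathrm{red}}$ is a simple normal crossings divisor in $\overline{X'}$. Setting $X':=\overline{X'}\setminus D$, the restriction $f|_{X'}\colon X'\to X_{K'}$ is also proper, surjective and generically étale of degree $d_0$. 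The composition $\mathrm{Tr}_f\circ\mathrm{adj}\colon \BQ_\ell\to Rf_*\BQ_\ell\to \BQ_\ell$ equals $d_0\cdot \mathrm{id}$, and since $d_0\in\BQ_\ell^\times$ both $R\Gamma(X_{K'},\BQ_\ell)$ and $R\Gamma_c(X_{K'},\BQ_\ell)$ become $\Gal(K')$-equivariant direct summands of the corresponding complexes on $X'$.

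It then remains to realise, up to strict semisimplification, the $\Gal(K')$-modules $H^q(X'_{\wt K},\BQ_\ell)$ and $H^q_c(X'_{\wt K},\BQ_\ell)$ as direct summands of cohomologies of smooth projective $K'$-varieties. For compactly supported cohomology this follows from the excision sequence
\[\cdots\to H^q_c(X'_{\wt K})\to H^q(\overline{X'}_{\wt K})\to H^q(D_{\wt K})\to H^{q+1}_c(X'_{\wt K})\to\cdots\]
combined with the Mayer--Vietoris spectral sequence for the SNC divisor $D$, whose closed strata $D^{(j)}$ are disjoint unions of smooth projective $K'$-varieties. For ordinary cohomology one uses the weight spectral sequence
\[E_2^{p,q}=H^p(D^{(q)}_{\wt K},\BQ_\ell)(-q)\Longrightarrow H^{p+q}(X'_{\wt K},\BQ_\ell).\]
The Tate twists $(-q)$ are absorbed by replacing each $D^{(q)}$ with $D^{(q)}\times_{K'}\BP^q_{K'}$: by Künneth, $H^p(D^{(q)})(-q)$ occurs as a direct summand of $H^{p+2q}(D^{(q)}\times \BP^q_{K'},\BQ_\ell)$, and the product is again smooth projective over $K'$. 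Collecting $\overline{X'}$, the strata $D^{(j)}$ and the products $D^{(j)}\times\BP^j_{K'}$ produces the required finite family $\{Y_i\}_{i=1}^{k}$.

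The main technical obstacle I expect is the trace identity $\mathrm{Tr}_f\circ\mathrm{adj}=d_0$ in $\BQ_\ell$-étale cohomology for a proper generically étale morphism onto a reduced but possibly non-normal target: one must factor $f$ through the normalisation of $\overline X_{K'}$ and invoke the standard trace formalism in the derived category of constructible $\BQ_\ell$-sheaves. These inputs are precisely what is assembled in Berthelot's Bourbaki survey~\cite{Berthelot}.
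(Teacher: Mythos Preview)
The proposed approach has a genuine gap at the trace step. The identity $\mathrm{Tr}_f\circ\mathrm{adj}=d_0\cdot\mathrm{id}$ as a morphism $\BQ_\ell\to\BQ_\ell$ on $\overline X_{K'}$ (or on $X_{K'}$) is \emph{false} in general when the target is singular, and no amount of normalising will repair it. A minimal counterexample: take $X=\overline X$ a geometrically integral nodal rational curve over $K$, and let $f\colon\BP^1_{K'}\to X_{K'}$ be its normalisation, an alteration of degree $d_0=1$. Then $H^1(X_{\wt K},\BQ_\ell)\cong\BQ_\ell$ while $H^1(\BP^1_{\wt K},\BQ_\ell)=0$, so $f^*$ is not injective on $H^1$ and $\BQ_{\ell,X}$ cannot be a direct summand of $Rf_*\BQ_{\ell,\BP^1}$ in the derived category. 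Factoring through the normalisation is of no help here, since $f$ already \emph{is} the normalisation map. More conceptually, for an alteration $f$ with smooth source, the decomposition theorem only guarantees that $IC_X$, not the constant sheaf $\BQ_{\ell,X}$, occurs as a summand of $Rf_*\BQ_\ell[\dim X]$; for singular $X$ these differ. Your reduction ``by treating irreducible components separately'' hides an analogous problem: the cohomology of $X_1\cup X_2$ involves that of $X_1\cap X_2$ via Mayer--Vietoris and is not simply a direct sum over components.

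The paper avoids exactly this obstruction by the induction on $\dim X$ that you tried to eliminate. For compactly supported cohomology it first passes to a smooth dense open $U\subset X$ (the complement being lower-dimensional), then alters $U$ by de~Jong, and then \emph{shrinks further} until the alteration becomes finite \'etale; only at that stage is the trace splitting $\BQ_{\ell,V}\hookrightarrow\pi_*\BQ_{\ell,V'}$ invoked, and there it is elementary. The discarded closed pieces are absorbed by the induction hypothesis. Ordinary cohomology of smooth $X$ is then handled by Poincar\'e duality, and the general singular case by smooth hypercoverings \`a la Deligne--de~Jong. The dimension induction is thus not a stylistic choice: it is precisely what keeps the argument inside the regime where the trace map exists and behaves as claimed.
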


Before giving the proof of Theorem~\ref{Berthelot}, let us state the following immediate consequence of Theorem~\ref{Berthelot}, Lemma~\ref{SemisimplifAndS(k)} and Corollary~\ref{FirstReduction}:
\begin{Cor}\label{SecondReduction} Let $p$ be a prime number or $p=0$. Let $L=\Ll\setminus \{p\}$.
Suppose that for all absolutely finitely generated fields $K_0$ with prime field $k_0$ of characteristic $p$
and for all smooth projective $K_0$-varieties $X_0/K_0$, the following conditions are true:
\begin{enumerate}
\item The family $(\rho_{\ell,X_0})_{\ell\in L}$ satisfies $\CR(k_0)$ if $p=0$ and $\CS(k_0)$ if $p>0$.
\item There exists a constant $c\in\Nn$ and a finite extension $E_0$ of $K_0$ such that for all $\ell\in \Ll\setminus\{p\}$ one has $\rho_{\ell,X_0}(\Gal({E_0\wt k_0}))\in\Sigma_\ell(c)$.
\end{enumerate}
Let $k$ be an algebraically closed field of characteristic $p$. Let $K/k$ be 
a finitely generated extension and $X/K$ a separated algebraic scheme. 
Then there exists 
a finite Galois extension $E/K$ 
such that the following holds true:
\begin{enumerate}
\item[(i)] The family $(\rho_{\ell, X})_{\ell\in L}$ satisfies $\CR(k)$ if $p=0$ and $\CS(k)$ if $p>0$.
\item[(ii)] For every $\ell\in L$ the group $\rho_{\ell, X}(\Gal(E))$ lies in $\Sigma_\ell(c)$ and is generated by its $\ell$-Sylow subgroups.
\item[(iv)] The restricted family $(\rho_{\ell, X}\resg{\Gal(E)})_{\ell\in L\setminus\{2,3\}}$ is independent and $(\rho_{\ell, X})_{\ell\in L}$ is almost independent.
\end{enumerate}
In particular Theorem~\ref{main} holds.
\end{Cor}

Note that the auxiliary field $K'$ from corollary 6.3 disappears in corollary 6.7 since we only record those assertions and conclusions from Proposition \ref{geometricLemm}, that
are important later, and these do not involve the field $K'$.

\begin{proof}[Proof of Theorem~\ref{Berthelot}]
For completeness we provide details of the proof in \cite{Berthelot}. For $?\in\{c,\emptyset\}$ we denote by $\rho_{\ell,X,?}$ the representation of $\Gal(K)$ on $\bigoplus_{q\ge0}\big(H^q_?(X_{\wt K},\BQ_\ell)\big)$. It suffices to prove the theorem separately for the families $(\rho_{\ell,X,?})_{\ell\in L}$. We also note that whenever it is convenient, we are allowed (by passing from $K$ to a finite extension) to assume that $X$ is geometrically reduced over $K$. This is so because $H^q_?(X_{\wt K},\BQ_\ell)\cong H^q_?(X_{\wt K,\red},\BQ_\ell)$ for any $q\in\BZ$ and $?\in \{\emptyset,c\}$. We first consider the case of cohomology with compact supports. The proof proceeds by induction on $\dim X$. 

We may assume, by passing from $K$ to a finite extension, that $X$ is geometrically reduced. Then $X_{\wt K}$ is generically smooth. After passing from $K$ to a finite extension, we can find a dense open subscheme $U\subset X$ that is smooth over $K$. By the long exact cohomology sequence with supports (cf.~\cite[Rem.~III.1.30]{milne}) we have for any $\ell$ an exact sequence 
$$\ldots\longto H^i_c(U_{\wt K},\BQ_\ell)\longto H_c^i(X_{\wt K},\BQ_\ell)\longto H^i_c((X\setminus U)_{\wt K},\BQ_\ell)\longto\ldots ,$$
so that for all $\ell$ the representation $(\rho_{\ell,X,c})^\sss$ is a direct summand of $(\rho_{\ell,U,c})^\sss\oplus(\rho_{\ell,X\setminus U,c})^\sss$. By induction hypothesis, it thus suffices to treat the case that $U$ is smooth over $K$. By the induction hypothesis it is also sufficient to replace $U$ by any smaller dense open subscheme, and it is clearly also sufficient to treat the case where $U$ is in addition geometrically irreducible.
 
By de Jong's theorem on alterations (cf.~\cite[Thms.~4.1, 4.2]{dejong}), after passing from $K$ to a finite extension, we can find a smooth projective scheme $Y,$ an open subscheme $U'$ of $Y$ and an alteration $\pi\colon U'\to U$. By replacing $K$ yet another time by a finite extension, we can assume that $U'\to U$ is generically finite \'etale. And now we pass to an open subscheme $V$ of $U$ and to $V':=\pi^{-1}(V)\subset U'$ such that $V'\to V$ is finite \'etale. By the induction hypothesis applied to $Y\setminus V'$ and again the long exact cohomology sequence for cohomology with support, we find that the assertion of the theorem holds true for $(\rho_{\ell,V',c})_{\ell\in L}^\sss$. From now on $\pi$ denotes the restriction to $V'$ and $\BQ_{\ell,X}$ will be the constant sheaf $\BQ_\ell$ on any scheme $X$. Since $\pi$ is finite \'etale, say of degree $d$, there exists a trace morphism $\Trace_{\pi}\colon \pi_* \BQ_{\ell,V'}\to\BQ_{\ell,V}$ whose composition with the canonical morphism $\BQ_{\ell,V}\to  \pi_* \BQ_{\ell,V'}$ is multiplication by~$d$ (cf.~\cite[Lem.~V.1.12]{milne}). In particular, the constant sheaf $\BQ_{\ell,V}$ is a direct summand of $\pi_* \BQ_{\ell,V'}$. Since $H_c^i(V'_{\wt K},\BQ_\ell)\cong H_c^i(V_{\wt K},\pi_*\BQ_\ell)$, we deduce that $(\rho_{\ell,V,c})^\sss$ is a direct summand of $(\rho_{\ell,V',c})^\sss$, and this completes the induction step.
 
Now we turn to the case $?=\emptyset$. The case when $X$ is smooth over $K$ but not necessarily projective is reduced, by Poincar\'e duality, to the case of compact supports: If $X$ is connected then one has $H^q(X_{\wt K},\BQ_\ell)\cong H^{2d-q}_c(X_{\wt K},\BQ_\ell(d))^\vee$ for $d=\dim X$ (cf.~\cite[Cor.~VI.11.12]{milne}), and one can reduce to the connected case by considering the connected components of $X$ seperately. 

Suppose now that $X$ is an arbitrary separated algebraic scheme over $K$. By what we said above, we may assume that $X$ is geometrically reduced. Again we perform an induction on $\dim X$. The first step is a reduction to the case where $X$ is irreducible, which maybe thought of as an induction by itself. Suppose $X=X_1\cup X_2$ where $X_1$ is an irreducible component of $X$ and $X_2$ is the closure of $X\setminus X_1$. Consider the canonical morphism $f\colon X_1\sqcup X_2\to X$. It yields a short exact sequence of sheaves 
\begin{equation}\label{sesforcomponents}
0\longto \BQ_{\ell,X}\longto f_*\BQ_{\ell,X_1\sqcup X_2}\longto \CF \longto 1
\end{equation}
where $\CF$ is a sheaf on $X$. Consider the inclusion $i\colon X_0\into X$  for $X_0:=X_1\cap X_2$.
We claim that $\CF\cong i_* \BQ_{\ell,X_0}$. To see this observe first that if we compute the pullback of the sequence along the open immersion $j\colon X\setminus X_0 \into X$, then $\CF$ vanishes and the morphism on the left becomes an isomorphism. In particular, $\CF$ is supported on $X_0$. To compute the pullback along the closed immersion $i$ we may apply proper base change, since $f$ is proper. But now the restriction of $f$ to $X_0$ is simply the trivial double cover $X_0 \sqcup X_0\onto X_0$, so that $i^*\CF\cong \BQ_{\ell,X_0}$. This proves the claim because $\CF\cong i_*i^*\CF$, as $\CF$ is supported on $X_0$. 

By an inductive application of the long exact cohomology sequences to sequences like~(\ref{sesforcomponents}), it suffices to prove the theorem for schemes $X$ that are geometrically integral and separated algebraic over $K$. In this case, the proof follows by resolving $X$ by a smooth hypercovering $X_\bullet$ see \cite[p.~51]{dejong}, \cite[6.2.5]{deligne} and the proof of \cite[Thm.~6.3.2]{Berthelot}. Since the hypercovering yields a spectral sequence that computes the cohomology of $X$ in terms of the cohomologies of the smooth $X_i$, and this for all $\ell$, and since only those $X_i$ with $i\le 2\dim X$, contribute to $X$, the induction step is complete since we have reduced the case of arbitrary $X$ to lower dimensions and to smooth~$X_i$.
\end{proof}

\section{Proof of the main theorem}\label{TheProof}
In this final section, we study the family $(\rho_{\ell,X})_{\ell\in L}$ in the particular case where $K$ is absolutely finitely generated with prime field $k$. We shall establish properties $\CR(k)$ and if $\Char(k)>0$ also property $\CS(k)$. This will use Deligne's purity results from Weil I (cf.~\cite[Thm.~1.6]{deligneweil1}) as well as the global Langlands correspondence for function field proved by Lafforgue (cf.~\cite{Lafforgue}). 

\medskip

Let $k$ be a finite field and $U$ a smooth $k$-variety. For every closed point $u\in U$ let $k(u)$ be the 
(finite) residue field of $u$, and let $D(u)\subset \pi_1(U)$ be the corresponding decomposition group (defined only
up to conjugation). 
Denote by $\mathrm{Fr}_u\in D(u)$ the preimage under the canonical isomorphism $D(u)\cong \Gal(k(u))$ of the map $\sigma_u: x\mapsto x^{|k(u)|}$, so that within $\pi_1(U)$, the automorphism $\mathrm{Fr}_u$ is also defined only up to conjugation.

\begin{Prop}\label{basechange}
Suppose $K$ is a finitely generated field of characteristic $p\ge 0$. Let $k$ be the prime field of $K$.
Let $X/K$ be  a smooth projective scheme. 
\begin{enumerate}
\item[a)] There exists a finite separable extension $K'/K$ and smooth $k$-variety 
$U'/k$ with function field $K'$ such that for each $q\ge 0$ and every $\ell\in\Ll\smallsetminus \{p\}$, 
the representation $\rho_{\ell,X}^{(q)}\resg{\Gal(K')}$ of
$\Gal(K')$ is unramified along $U'$.
\item[b)] If $p>0$, then the family of representations $(\rho_{\ell, X}^{(q)}\resg{\Gal(K')})_{\ell\in\Ll\smallsetminus \{p\}}$
is strictly compatible and pure of weight $q$, that is: 
For every closed point $u'\in U'$ the characteristic polynomial $p_{u'}(T)$ of 
$\rho_{\ell, X}^{(q)}(\mathrm{Fr}_{u'})$ has integral coefficients, 
is independent of $\ell\in \Ll\smallsetminus \{p\}$, and the reciprocal roots of $p_{u'}(T)$ all have absolute value
$|k(u')|^{q/2}$.
\end{enumerate}
\end{Prop}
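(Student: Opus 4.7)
The plan is to construct a smooth proper model of a base change of $X$ over an open subset of a smooth $k$-variety, and then deduce both assertions from standard theorems: smooth and proper base change for $\ell$-adic cohomology for (a), and Deligne's purity theorem for (b).

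For (a), I would first invoke the alteration theorem of de Jong (cf.~\cite[Thm.~4.1]{dejong}). Applied to any normal projective $k$-variety with function field $K$, it yields a finite separable extension $K'/K$ and a smooth projective $k$-variety $\overline V$ whose function field is $K'$. Since $k$ (the field of constants of $K$) is perfect, smoothness over a possible auxiliary finite extension $k'/k$ descends to smoothness over $k$, and separability of $K'/K$ is guaranteed by choosing the alteration to be generically \'etale. The base change $X':=X\otimes_K K'$ is smooth and projective over~$K'$. By the spreading-out principles of \cite[\S8]{EGAIV3}, after shrinking to a dense open $U'\subset\overline V$, there exists a smooth and proper morphism $f\colon\mathcal X\to U'$ whose generic fiber is isomorphic to $X'$. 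The key point is that this $U'$ is chosen once and depends neither on $\ell$ nor on $j$. The smooth and proper base change theorem now implies that each sheaf $R^jf_*\BQ_\ell$ is lisse on $U'$ for every $j\ge 0$ and every $\ell\in\Ll\setminus\{p\}$. Equivalently, the representation $\rho_{\ell,X}^{(j)}\resg{\Gal(K')}$ factors through $\pi_1(U')$, which is precisely the condition of being unramified along~$U'$.

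For (b), fix a closed point $u'\in U'$ with a geometric point $\bar u'$ above it. Smooth and proper base change identifies the stalk $(R^jf_*\BQ_\ell)_{\bar u'}$ with $H^j(\mathcal X_{\bar u'},\BQ_\ell)$, where $\mathcal X_{u'}$ is smooth projective over the finite residue field $k(u')$; under this identification the element $\mathrm{Fr}_{u'}$ acts as the geometric Frobenius of $\mathcal X_{u'}$. Deligne's purity theorem \cite[Thm.~1.6]{deligneweil1} then asserts that the characteristic polynomial $p_{u'}(T)$ of this Frobenius has coefficients in $\Zz$, is independent of $\ell$ (so the system is strictly compatible), and has reciprocal roots with complex absolute value $|k(u')|^{j/2}$ (purity of weight~$j$).

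The main technical input sits in (a), namely producing a \emph{single} open $U'\subset\overline V$ simultaneously spreading out $X'$ to a smooth proper family across all primes $\ell\ne p$. Once such a model exists, both assertions are formal consequences of the machinery cited above. I expect no serious obstacle beyond bookkeeping: the only delicate point is to ensure via de Jong that $K'/K$ can be chosen finite separable and that $\overline V$ is smooth over $k$ itself, which is standard since $k$ is perfect.
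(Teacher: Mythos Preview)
Your proposal is correct and follows essentially the same route as the paper: de Jong's alteration produces a smooth $k$-variety with function field a finite separable extension $K'$, spreading-out yields a smooth projective model $f\colon\mathcal X\to U'$, smooth-proper base change makes each $R^jf_*\BQ_\ell$ lisse (giving (a)), and specialising to closed fibres together with Deligne's Weil~I yields (b). The paper adds a preliminary reduction to geometrically connected $X$ and constructs the initial normal model via a separating transcendence base, but these are cosmetic differences rather than substantive ones.
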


\begin{proof} Note that $k$ is perfect. There exists a finite separable extension $E/K$ such 
that $X_E$ splits up into a disjoint union of geometrically connected smooth projective $E$-varieties.
Thus, after replacing $K$ by a finite separable extension,
we can assume that $X/K$ is geometrically connected.
Let $(S_1,\cdots, S_r)$ be a separating transcendence basis of $K/k$. Identify $k(S_1,\cdots, S_r)$ with the
function field of $\Aa_r$ and let $S$ be the normalization of $\Aa_r$ in $K$. Then $S$ is a normal $k$-variety with
function field $K$. There exists an alteration $S'\to S$ such that $S'/k$ is a smooth variety 
and the function field $K'$ of $S'$ is
a finite separable extension of $K$ (cf.~\cite[Thm.~4.1, Remark 4.2]{dejong}).
Let $\mathfrak{A}$ be the set of all affine open subschemes of $S'$. Then $\Spec(K')=\plim_{U'\in 
\mathfrak{A}} U'$. There exists $U'\in \mathfrak{A}$ and a projective $U'$-scheme $f: \mathcal{X}'\to U'$ such that $\mathcal{X}'\times_{U'} \Spec({K'})=X_{K'}$. 
(cf.~\cite[8.8.2]{EGAIV3} and \cite[8.10.5(v) and (xiii)]{EGAIV3}). By the theorem of generic smoothness,
after shrinking $U'$ and $\mathcal{X'}$, we can assume
that $f$ is smooth. Furthermore, after removing the support of $f_*(\OO_{\mathcal{X}'})$ from $U'$ and shrinking $\mathcal{X}'$ 
accordingly, we can assume that $f$ has geometrically connected fibres.

Let $q\ge 0$ and $u'\in U'$. Define $k':=k(u')$ and $X_{u'}:=\mathcal{X}'\times_{U'} \Spec(k')$. 
Then for every $\ell\in\Ll\smallsetminus\{p\}$ the \'etale sheaf
$R^q f_* \Zz_\ell$ is lisse and compatible with any base change (cf.~\cite[VI.2., VI.4]{milne}).
Thus $\rho_{\ell, X}^{(q)}\resg{\Gal(K')}$ factors through $\pi_1(U')$ and  a) holds true. 
Furthermore it follows that $H^q(X_{\wt K}, \Qq_\ell)$ can be 
identified with $H^q(X_{u', \wt{k'}}, \Qq_\ell)$ in a way compatible with the Galois actions. Assume $p>0$. 
Part b) then follows applying
Deligne's theorem on the Weil conjectures (cf.~\cite[Thm.~1.6]{deligneweil1}) to~$X_{u'}$. 
\end{proof}

\begin{Lem}
Let the notation be as in the previous proposition and suppose we are in the situation of part (b), so that $k$ is a finite field. Fix $q\in\BZ$ and denote by $n$ the dimension of $H^q(X_{\wt K},\BQ_\ell)$ which is independent of $\ell\neq p$. Then the following hold:
\begin{enumerate}
\item For any smooth curve $C$ over $k$ and morphism $\phi\colon C\to U'$, there exist irreducible cuspidal automorphic representations $(\pi_{C,i})_{i=1,\ldots,m_\phi}$ of $\GL_{n_i}(\BA_{k(C)})$ such that $\sum_in_i=n$ and~for~all $\ell$ the representation $(\rho_{\ell, X}^{(q)}\circ\phi_*)^\ssi$, where $\phi_*\colon \pi_1(C)\to\pi_1(U'),$ agrees with the $\ell$-adic representation $\oplus_i \rho_{\ell,\pi_{C,i}}$ attached to $\pi_{C,i}$ via the global Langlands correspondence in \cite[p.~2]{Lafforgue}.
\item If for some prime $\ell_0\ge3$ there is a $\BZ_{\ell_0}$-lattice $\Lambda$ of the $\BQ_{\ell_0}$ representation space underlying $\rho_{\ell_0, X}^{(q)}$ that is stabilized by $\Gal({K'})$ and such that $\Gal({K'})$ acts trivially on 
$\Lambda/\ell_0 \Lambda$, then in (a) for all $\phi\colon C\to U'$ the representations $\pi_{C,i}$ are semistable\footnote{We call an automorphic representation $\pi$ of $\GL_n$ {\sl semistable} at a place $v$ if under the bijective local Langlands correspondence between local representations and Frobenius semi-simplified Weil-Deligne parameters, the Weil-Deligne parameter of $\pi_v$ is unramified when restricted to the Weil group. This definition is for instance used in \cite[\S~2]{Rajan}. In {\sl loc.~cit.}\ on page~8 it is also pointed out that by \cite{bz}, this notion of semistability is equivalent to that of the automorphic representation having an Iwahori fixed vector.} at all places of $P(C)\setminus C$. In particular, for all $\ell\neq p$, the representation $\rho^{(q)}_{\ell,X}$ satisfies $\CS(k)$.
\end{enumerate}
\end{Lem}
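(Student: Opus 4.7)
The plan is to combine three tools: the purity and strict compatibility recorded in Proposition~\ref{basechange}, Grothendieck's local monodromy theorem, and the compatibility of Lafforgue's global Langlands correspondence with local Langlands at every place of~$k(C)$.

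For part~(a), by Proposition~\ref{basechange} each $\rho^{(q)}_{\ell,X}|_{\Gal(K')}$ factors through $\pi_1(U')$, and as $\ell$ varies the resulting system is strictly compatible and pure of weight~$q$. Pulling back along $\phi_*\colon \pi_1(C)\to\pi_1(U')$ preserves both properties. Passing to semisimplifications (which preserves Frobenius traces and hence strict compatibility) and decomposing into $\pi_1(C)$-irreducible constituents $\sigma_{\ell,i}$ yields, for each $\ell$, a list indexed by $i=1,\dots,m_\phi$ of dimensions $(n_i)$ summing to $n$, with Frobenius traces at all closed points of $C$ independent of~$\ell$. Twisting each $\sigma_{\ell,i}$ by a suitable power of the cyclotomic character -- a twist of the same shape for all $\ell$, thanks to purity -- makes its determinant of finite order; Lafforgue's theorem then attaches to it a cuspidal automorphic representation $\pi_{C,i}$ of $\GL_{n_i}(\BA_{k(C)})$. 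Strict compatibility forces the $\pi_{C,i}$ to be independent of~$\ell$, so after untwisting we obtain part~(a).

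For part~(b), the local input is the triviality of the $\Gal(K')$-action on $L/\ell_0 L$: for any valuation $w$ of $k(C)$ above $v\in P(C)\setminus C$, the image $\rho^{(q)}_{\ell_0,X}(I_w)$ lies in $1+\ell_0 M_n(\BZ_{\ell_0})$. Grothendieck's local monodromy theorem says that $I_w$ acts quasi-unipotently, so all its eigenvalues are roots of unity in $\ol\BZ_{\ell_0}$ congruent to $1$ modulo the maximal ideal; because $\ell_0\ge 3$, any such root of unity equals~$1$. Hence $I_w$ acts by unipotent matrices on $\rho^{(q)}_{\ell_0,X}|_{\pi_1(C)}$, and unipotency is inherited by every Jordan--H\"older subquotient, so each $\sigma_{\ell_0,i}$ is semistable at $v$ in the Weil--Deligne sense. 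By the compatibility of Lafforgue's correspondence with local Langlands at every place, established in~\cite{Lafforgue}, this semistability translates to $\pi_{C,i}$ having an Iwahori-fixed vector at~$v$. Being an intrinsic property of $\pi_{C,i}$, it holds for every $\ell\neq p$ and transfers back via local Langlands to unipotent $I_w$-action on each $\sigma_{\ell,i}$, and hence on $(\phi_*\rho^{(q)}_{\ell,X})^\ssi$.

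To deduce $\CS(k)$, note first that $\CR(k)$ already holds with the field $K'$ and variety $U'$ of Proposition~\ref{basechange}, since any quotient of a representation factoring through $\pi_1(U')$ still factors through $\pi_1(U')$. By Lemma~\ref{SemisimplifAndS(k)}(a), it suffices to verify the tameness condition for the semisimplifications. For these, the image of $I_w$ lies in a unipotent subgroup of $\GL_n$, hence is pro-$\ell$; consequently its image in $\wt{\rho_\ell}$ remains pro-$\ell$, which is exactly condition~(b$'$) at $w$ from Section~\ref{FundGps}. Ranging over all smooth curves $\phi\colon C\to U'$ and all $v\in\partial C$, Proposition~\ref{KS-variant}(b) upgrades this curve $\ell$-tameness to divisor $\ell$-tameness of $\wt{\rho^{(q)}_{\ell,X}}|_{\Gal(K')}$, thereby establishing~$\CS(k)$ for the family $(\rho^{(q)}_{\ell,X})_{\ell\ne p}$.

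The main obstacle in this outline is the invocation of local-global compatibility of Lafforgue's correspondence at ramified places, one of the deepest parts of~\cite{Lafforgue}; it is this ingredient that allows the mod-$\ell_0$ geometric hypothesis to propagate to an $\ell$-adic semistability statement valid for every $\ell\neq p$.
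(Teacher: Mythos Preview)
Your argument follows the same strategy as the paper's and is correct in outline. There is one slip in the local step for~(b): from $\rho^{(q)}_{\ell_0,X}(I_w)\subset 1+\ell_0 M_n(\BZ_{\ell_0})$ you conclude that the eigenvalues are roots of unity congruent to $1$ modulo the \emph{maximal ideal} of $\ol\BZ_{\ell_0}$, and then assert this forces them to equal~$1$ when $\ell_0\ge 3$; but $\zeta_{\ell_0}$ already satisfies that congruence, so as written the step fails. What the hypothesis actually gives is the stronger congruence $\lambda\equiv 1\pmod{\ell_0\ol\BZ_{\ell_0}}$: if $g=1+\ell_0A$ with $A\in M_n(\BZ_{\ell_0})$ and $gv=\lambda v$, then $Av=\tfrac{\lambda-1}{\ell_0}\,v$, so $(\lambda-1)/\ell_0$ is an eigenvalue of an integral matrix and hence lies in~$\ol\BZ_{\ell_0}$. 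For $\ell_0\ge 3$ no nontrivial root of unity satisfies $v(\lambda-1)\ge 1$, since $v(\zeta_{\ell_0^k}-1)=1/\bigl(\ell_0^{k-1}(\ell_0-1)\bigr)<1$ for $k\ge 1$. With this correction your eigenvalue argument is complete.

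The paper phrases this same local step differently: it observes that the $\ell_0$-adic logarithm converges on $1+\ell_0 M_n(\BZ_{\ell_0})$ (for $\ell_0>2$), so the image of inertia is pro-$\ell_0$, and then invokes the standard argument in Tate's Corvallis article \cite[Cor.~4.2.2]{Tate} to conclude that the associated Weil--Deligne representation has unramified underlying Weil representation. The two formulations are equivalent, and the remainder of your argument (transfer via local--global compatibility in \cite{Lafforgue}, then Proposition~\ref{KS-variant}(b)) matches the paper's exactly.
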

\begin{proof}
Let $C$ be a smooth curve over $k$ and $\phi\colon C\to U'$ a  morphism over $k$. By Proposition~\ref{basechange}, the family of representations $(\rho_{\ell, X}^{(q)}\circ\phi_*)^\ssi$, where ${\ell\neq p},$ is pure of weight $q$, semisimple and strictly compatible as a representation of $\pi_1(C)$. By the main theorem of \cite[p.~2]{Lafforgue}, each $(\rho_{\ell, X}^{(q)}\circ\phi_*)^\ssi$ being pure and semisimple gives rise to a list of irreducible cuspidal automorphic representations via the global Langlands correspondence.\footnote{Lafforgue in \cite{Lafforgue} describes only a correspondence between $\BQ_\ell$-sheaves of weight $0$, i.e., Galois representations to $\GL_n$ whose determinant has finite image, and automorphic representations with finite order central character. A general statement is given in \cite[\S~0 and 0.1]{Deligne}} By the strict compatibility and the bijectivity of the correspondence on simple objects, this list is the same for all $\ell$ (up to permutation). This proves~(a).

Let now $\ell_0$ be as in (b). For each $\phi\colon C\to U'$ as in (a) consider the representation 
$(\rho_{\ell_0, X}^{(q)}\circ\phi_*)^\ssi$ as an action on the lattice $\Lambda$, that is trivial modulo $\ell_0 \Lambda$. Any filtration of $\Lambda\otimes_{\BZ_{\ell_0}}\BQ_{\ell_0}$ preserved by the action of $\Gal(k(C))$ induces a filtration of $\Lambda$. 
Denote by $\Lambda_C$ the induced lattice for $(\rho_{\ell_0, X}^{(q)}\circ\phi_*)^\ssi$. Then it follows that the induced action of $\Gal(k(C))$ on $\Lambda_C/\ell_0 \Lambda_C$ is trivial. Since $\ell_0>2$, this implies that series of the logarithm converges on the image of $(\rho_{\ell_0, X}^{(q)}\circ\phi_*)^\ssi$. This image being pro-$\ell$, a standard argument (cf.~\cite[Cor.~4.2.2]{Tate}) shows that the Weil-Deligne representation at any place of $P(C)$ attached to 
$(\rho_{\ell_0, X}^{(q)}\circ\phi_*)^\ssi$ is unramified when restricted to the Weil group. But then by the compatibility of the global and local Langlands correspondence (cf.~\cite[Thme.~VII.3, Cor.~VII.5]{Lafforgue}), 
this means that all $\pi_{C,i}$ are semistable at the places in $\partial C$ (they are unramified at all other places). We now apply the same argument to all $\ell\neq p$ in reverse, to deduce that all ramification of 
$(\rho_{\ell, X}^{(q)}\circ\phi_*)^\ssi$ is $\ell$-tame for all $\ell \neq p$. The point simply is that in a family of Galois representations arising from a set of automorphic forms, the Weil-Deligne representation at a place of $P(C)$ is independent of~$\ell\neq p$.

Finally from Proposition~\ref{KS-variant}(b), which is a variant of a result of Kerz-Schmidt-Wiesend, we deduce that for all $\ell\neq p$, the representation $\rho^{(q)}_{\ell,X}$ is divisor $\ell$-tame. Combined with Proposition~\ref{basechange}(b), this establishes~$\CS(k)$.
\end{proof}

\begin{Rem}
We would like to point out the parallel between the proof of $\CS(k)$ in part (b) of the previous lemma and in Example~\ref{abvarex}. In the proof of (b) we selected a prime $\ell_0$ and enlarged $K$ so that its image would act trivially on $\Lambda/\ell_0 \Lambda$ via $\rho_{\ell_0}$ for a $\Gal(K)$-stable lattice $\Lambda$. Then we could use the uniformity provided by automorphic representations (after restricting the $\rho_\ell$ to any curve) to deduce from this that all ramification was semistable in a sense. 

In Example~\ref{abvarex} we selected a prime $\ell_0$ and enlarged $K$ so that it contains $K(A[\ell_0])$. This requirement is equivalent to asking that $\Gal(K)$ acts trivially on the quotient $T_{\ell_0}(A)/\ell_0T_{\ell_0}(A)$ for the lattice $T_{\ell_0}(A)$ from the Tate-module. Then we used the uniformity provided by the N\'eron model $\CN$ of the abelian variety $A$ over any discrete valuation ring in $K$ to deduce $\CS(k)$. The semistability of $\CN$ follows from the condition at the single prime $\ell_0$ -- in the analogous way that the semistability of automorphic representations was deduced from a single prime~$\ell_0$.
\end{Rem}

\begin{Cor}\label{CharPFamilyIsPSS}
Suppose $K$ is absolutely finitely generated and of characteristic $p>0$. Suppose $X/K$ is smooth projective. Then $(\rho_{\ell,X})_{\ell\in \BL\setminus\{p\}}$ satisfies~$\CS(k)$.
\end{Cor}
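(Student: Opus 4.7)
The plan is to apply the previous lemma to each cohomology degree $q$ individually and to assemble the resulting statements into a single statement for the finite direct sum. Since $X/K$ is smooth projective, Poincar\'e duality identifies each compactly supported cohomology with a Tate twist of the dual of ordinary cohomology, and both vanish for $q>2\dim X$, so the sum defining $\rho_{\ell,X}$ is finite.

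Applying Proposition \ref{basechange}(a) with $k$ the (finite) field of constants of $K$, I would first obtain a finite separable extension $K_1/K$ and a smooth $k$-variety $U_1$ with function field $K_1$ such that every $\rho_{\ell,X}^{(q)}|_{\Gal(K_1)}$ factors through $\pi_1(U_1)$, uniformly in $\ell \neq p$ and in the finitely many relevant $q$. To satisfy the hypothesis of part (b) of the previous lemma for every $q$ simultaneously, I would fix a prime $\ell_0 \geq 3$ with $\ell_0 \neq p$, choose any $\Gal(K_1)$-stable $\BZ_{\ell_0}$-lattice $L_q \subset \rho_{\ell_0,X}^{(q)}$ for each relevant $q$, and intersect the finitely many open kernels of the associated mod-$\ell_0$ reductions $\pi_1(U_1)\to \GL(L_q/\ell_0 L_q)$ to obtain a single connected finite \'etale cover $U_2 \to U_1$. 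Then $U_2$ is smooth over $k$, its function field $K_2$ is a finite Galois extension of $K_1$, and by construction $\Gal(K_2)$ acts trivially on every $L_q/\ell_0 L_q$.

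With these data, the previous lemma (b) applies for each $q$ with $K'=K_2$ and yields that $\rho_{\ell,X}^{(q)}|_{\Gal(K_2)}$ is divisor $\ell$-tame for every $\ell \neq p$. Divisor $\ell$-tameness is stable under direct sums -- the inertia at any discrete rank-one valuation injects into the product of the inertia images on the summands, which is an $\ell$-group -- and it is also stable under quotients. Consequently both $\rho_{\ell,X}|_{\Gal(K_2)}$ and its further quotient $\wt\rho_{\ell,X}|_{\Gal(K_2)}$ are divisor $\ell$-tame. Unramifiedness along $U_2$ is likewise inherited from the summands, so the pair $(K_2,U_2)$ witnesses $\CS(k)$ for the family $(\rho_{\ell,X})_{\ell \in \BL\setminus\{p\}}$.

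The substantive work is entirely contained in the previous lemma, which combines Deligne's purity theorem with the global Langlands correspondence of Lafforgue; the only new content in the corollary is the uniformization in $q$, a straightforward matter of intersecting finitely many open subgroups of $\pi_1(U_1)$. If any step deserves attention, it is verifying that the mod-$\ell_0$ \'etale cover $U_2 \to U_1$ preserves smoothness and the factorization of each $\rho_{\ell,X}^{(q)}$ through $\pi_1$, which is immediate from $\ell_0 \neq p$ and from the smoothness of $U_1$.
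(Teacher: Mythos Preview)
Your proposal is correct and follows essentially the same approach as the paper: fix an auxiliary prime $\ell_0\ge 3$ with $\ell_0\neq p$, pass to a finite extension over which the mod-$\ell_0$ action is trivial, and invoke part (b) of the preceding lemma. The paper is slightly terser in that it chooses a single lattice for the full direct sum $\rho_{\ell_0,X}$ rather than one lattice per degree, but this is only a cosmetic difference from your degree-by-degree treatment followed by an intersection of finitely many open kernels.
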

\begin{proof}
Fix a prime $\ell_0\ge3$. Choose a lattice $\Lambda$ underlying the representation $\rho_{\ell_0,X}$.
Replace $K$ by a finite extension such that $\Gal(K)$ acts trivially on $\Lambda/\ell_0 \Lambda$ via $\rho_{\ell_0,X}$. Apply now (b) of the previous lemma to deduce the corollary.
\end{proof}

\begin{Thm} \label{mainmain} Let $k'$ be a field of characteristic $p\ge 0$ 
and let $K'/k'$ be a finitely generated extension. Let $L=\Ll\smallsetminus \{p\}$.
Let $X'/K'$ be a separated algebraic scheme.
Then there exists a finite extension $E'/K'$ and a constant $c'\in\Nn$ with the following properties:
\begin{enumerate}
\item[(i)] For every $\ell\in L$ 
the group $\rho_{\ell, X'}(\Gal(\wt {k'}E'))$ lies in $\Sigma_\ell(c')$ and is generated by its $\ell$-Sylow subgroups.
\item[(ii)] The family $(\rho_{\ell, X'}\resg{\Gal(\wt{k'} E')})_{\ell\in L\setminus\{2,3\}}$ is independent and $(\rho_{\ell, X'}\resg{\Gal(\wt{k'} K)})_{\ell\in L}$ is almost independent.
\end{enumerate}
\end{Thm}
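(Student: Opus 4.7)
The plan is to assemble the result by stringing together the reductions from Section~\ref{Reductions} with the structural theorem of Section~\ref{groupconcepts}, the ramification/fundamental-group results of Section~\ref{FundGps}, the independence criterion of Section~\ref{indep-sec}, and the compatibility results of the present section. First I would invoke Corollary~\ref{SecondReduction}: it reduces the proof to checking, for every absolutely finitely generated field $K_0$ with constant field $k_0$ and every smooth projective $K_0$-variety $X_0$, the two hypotheses (a) and (b) appearing there, namely $\CR(k_0)$ if $\Char(k_0)=0$ and $\CS(k_0)$ if $\Char(k_0)>0$, together with a uniform constant $c$ and a finite extension $E_0/K_0$ with $\rho_{\ell,X_0}(\Gal(E_0\wt{k_0}))\in\Sigma_\ell(c)$ for all $\ell\in L$.

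To verify the ramification hypothesis (a), note that in positive characteristic it is exactly Corollary~\ref{CharPFamilyIsPSS}, while in characteristic zero Proposition~\ref{basechange}(a) directly produces a smooth model $U'/k_0$ along which every $\rho^{(j)}_{\ell,X_0}\resg{\Gal(K')}$ is unramified, which is $\CR(k_0)$. To verify (b), set $n:=\dim_{\Qq_\ell}\bigoplus_q\bigl(H^q(X_{0,\wt{K_0}},\Qq_\ell)\oplus H^q_c(X_{0,\wt{K_0}},\Qq_\ell)\bigr)$, which is independent of $\ell$; then each $\rho_{\ell,X_0}$ is $n$-bounded at $\ell$, so Theorem~\ref{lp-main} yields a short exact sequence $1\to M_\ell\to\rho_{\ell,X_0}(\Gal(K_0))\to H_\ell\to1$ with $M_\ell\in\Sigma_\ell(2^n)$ and $H_\ell\in\Jor_\ell(J'(n))$.

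The next step is to kill the Jordan-type quotients $H_\ell$ uniformly in $\ell$. Since $k_0$ is finite or a number field and the quotient maps $\Gal(K_0)\to H_\ell$ inherit $\ell$-tameness (in char~$p>0$) from (a) — the $\ell$-tame hypothesis survives any further quotient because the pro-$\ell$ part is the obstruction — Proposition~\ref{FiniteAndRamif-prop}(a) provides a single finite extension $E_0/K_0$ with $H_\ell\bigl|_{\Gal(E_0\wt{k_0})}=\{1\}$ for every $\ell\in L$. Hence $\rho_{\ell,X_0}(\Gal(E_0\wt{k_0}))\subset M_\ell\in\Sigma_\ell(2^n)$, and (b) holds with $c=2^n$. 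By Lemma~\ref{FamilyRestriction} the hypotheses $\CR$/$\CS$ remain valid after passing from $K_0$ to $E_0$ and from $k_0$ to $\wt{k_0}$, so the family $(\rho_{\ell,X_0}\resg{\Gal(E_0\wt{k_0})})_{\ell\in L}$ satisfies precisely the input of Proposition~\ref{geometricLemm} (with base field $\wt{k_0}$). That proposition supplies a further finite abelian extension $E'$ whose properties (i)--(iii) translate, after tracing back through the reduction, into exactly (i) and (ii) of the theorem.

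The main obstacle is arranging $\CS(k_0)$ in positive characteristic: without uniform-in-$\ell$ control of ramification one cannot apply Proposition~\ref{FiniteAndRamif-prop} to trivialize the $H_\ell$, and all subsequent steps collapse. This control is the real content of Corollary~\ref{CharPFamilyIsPSS}, which relies on Lafforgue's global Langlands correspondence to transfer a mod-$\ell_0$ triviality condition at a single auxiliary prime into simultaneous $\ell$-tameness of $\rho^{(q)}_{\ell,X_0}$ at every other $\ell$, combined with the Kerz--Schmidt--Wiesend curve-to-divisor principle of Proposition~\ref{KS-variant}(b) to turn curve-$\ell$-tameness into divisor-$\ell$-tameness on the higher-dimensional base. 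Once that technical heart is in place, the remainder of the argument is a careful book-keeping exercise combining the previously established reductions and structural lemmas.
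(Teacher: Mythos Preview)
Your proposal is correct and follows essentially the same route as the paper: both invoke Corollary~\ref{SecondReduction} to reduce to the absolutely finitely generated smooth projective case, verify condition~(a) via Proposition~\ref{basechange} and Corollary~\ref{CharPFamilyIsPSS}, and verify condition~(b) by applying Theorem~\ref{lp-main} to get the $M_\ell/H_\ell$ decomposition and then Proposition~\ref{FiniteAndRamif-prop}(a) to trivialize the $H_\ell$ uniformly. Your write-up is in fact somewhat more explicit than the paper's about the final appeal to Lemma~\ref{FamilyRestriction} and Proposition~\ref{geometricLemm}, which the paper leaves bundled inside Corollary~\ref{SecondReduction}.
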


\begin{proof}
It suffices to establish conditions (a) and (b) of Corollary~\ref{SecondReduction}. For this, let $K$ be absolutely finitely generated, $k$ its prime field and $X/K$ a smooth projective variety. In this case we have proven that $\CR(k)$ holds in Proposition~\ref{basechange} and that $\CS(k)$ holds for $k$ of positive characteristic in Corollary~\ref{CharPFamilyIsPSS}. This verifies condition~(a) of Corollary~\ref{SecondReduction}.

For condition~(b) define $n$ to be the dimensions of $\rho_{\ell,X}$. Since $X/K$ is smooth projective, this dimension is independent of the chosen $\ell$. Thus all images $\rho_{\ell,X}(\Gal(K))$ are $n$-bounded at $\ell$. From Theorem~\ref{lp-main} we deduce for each $\ell$ the existence of a short exact sequence 
$$1\to M_\ell \to \rho_{\ell,X}(\Gal(K)) \to H_\ell \to 1$$
with $M_\ell$ in $\Sigma_\ell(2^n)$ and $H_\ell$ in $\Jor_\ell(J'(n))$, for the constant $J'(n)$ from Theorem~\ref{lp-main}. Consider the induced representations $\tau_\ell$ defined as the composite $\Gal(K)\stackrel{\rho_{\ell,X}}\to \rho_{\ell,X}(\Gal(K)) \to H_\ell$. Since $\rho_{\ell,X}$ satisfies $\CR(k)$ if $\Char(k)=0$ and $\CS(k)$ if $\Char(k)>0$, the hypotheses of Proposition~\ref{FiniteAndRamif-prop} are met if we replace $\Gal(K)$ by $\pi_1(U')$ with $U'$ from Proposition~\ref{basechange}. Hence there exists a finite extension $K'$ of $K$ such that $\tau_\ell(\Gal(K'\wt{k}))$ is trivial for all $\ell\neq p$. But this shows that $\rho_{\ell,X}(\Gal(K'\wt k))$ lies in $\Sigma_\ell(2^n)$ for all $\ell\neq p$, proving condition~(b) of Corollary~\ref{SecondReduction} and completing the proof.
\end{proof}

\begin{Rem}
Under the hypothesis of the above theorem, Corollary~\ref{SecondReduction} also tells us that $\CR(k')$ holds for $(\rho_{\ell,X'})_{\ell\in\BL}$ if $\Char(k')=0$ and that $\CS(k')$ holds for $(\rho_{\ell,X'})_{\ell\in\BL}$ if $\Char(k')=p>0$.
\end{Rem}

{\sc Gebhard B{\" o}ckle\\
Computational Arithmetic Geometry\\
IWR (Interdisciplinary Center for Scientific Computing)\\
University of Heidelberg\\
Im Neuenheimer Feld 368\\
69120 Heidelberg, Germany}\\
E-mail address: \texttt{\small boeckle@uni-hd.de} 
\par\medskip

{\sc Wojciech Gajda\\
Faculty of Mathematics and Computer Science\\
Adam Mickiewicz University\\
Umultowska 87\\
61614 Pozna\'{n}, Poland}\\
E-mail adress: \texttt{\small gajda@amu.edu.pl}
\par\medskip
\newpage

{\sc Sebastian Petersen\\ 
Institut INF1\\
Universit\"at der Bundeswehr\\
Werner-Heisenberg-Weg 39\\
85577 Neubiberg, Germany}\\
E-mail adress: \texttt{\small sebastian.petersen@unibw.de} 
\end{document}